\title{Reproducibility in Optimization:\\ Theoretical Framework and Limits}
\author{Kwangjun Ahn\thanks{Part of this work was done when Kwangjun Ahn was an intern at Google Research.}  \\ MIT EECS \\ \texttt{kjahn@mit.edu} \And 
Prateek Jain \\ Google Research  \\ \texttt{prajain@google.com} \And 
Ziwei Ji \\ Google Research \\ \texttt{ziweiji@google.com} \And
Satyen Kale \\ Google Research\\ \texttt{satyenkale@google.com} \And
Praneeth Netrapalli \\ Google Research  \\ \texttt{pnetrapalli@google.com}  \And
Gil I. Shamir \\ Google Research, Brain Team\\ \texttt{gshamir@google.com}}
\begin{document}

\maketitle

\begin{abstract}
  We initiate a formal study of reproducibility in optimization. We define a quantitative measure of reproducibility of optimization procedures in the face of noisy or error-prone operations such as inexact or stochastic gradient computations or inexact initialization. We then analyze several convex optimization settings of interest such as smooth, non-smooth, and strongly-convex objective functions and establish tight bounds on the limits of reproducibility in each setting. Our analysis reveals a fundamental trade-off between computation and reproducibility: more computation is necessary (and sufficient) for better reproducibility.
\end{abstract}


\doparttoc 
\faketableofcontents 
 
\section{Introduction}

\label{sec:intro}
Machine learned models are increasingly entering wider ranges of domains in our lives, driving a constantly increasing number of important systems.  
Large scale systems can be trained in highly parallel and distributed training environments, with a large amount of randomness in training the models.  While some systems may tolerate such randomness leading to models that differ from one another every time a model retrains, for many applications, {\em reproducible} models are required, where slight
changes in training do not lead to drastic differences in the model learned. 

Beyond practical deployments of machine learned models, the reproducibility crisis in the machine learning academic world has also been well-documented: see \citep{pineau2021improving} and the references therein for an excellent discussion of the reasons for irreproducibility (insufficient exploration of hyperparameters and experimental setups, lack of sufficient documentation, inaccessible code, and different computational hardware) and for mitigation recommendations. Recent papers \citep{chen20,damour20,dusenberry20,snapp2021synthesizing,summers21,yu2021dropout} have also demonstrated that even when models are trained on identical datasets with identical optimization algorithms, architectures, and hyperparameters, they can produce significantly different predictions on the same example.
This type of irreproducibility may be caused by multiple factors \citep{damour20,fort2020deep,frankle2020linear,shallue18,snapp2021synthesizing,summers21}, such as non-convexity of the objective, random initialization, nondeterminism in training such as data shuffling, parallelism, random schedules, hardware used, and round off quantization errors.  Perhaps surprisingly, even if we control for the randomness by using the same ``seed" for model initialization, other factors such as numerical errors introduced due to nondeterminism of modern GPUs (see, e.g., \citep{zhuang2021randomness}) may still lead to significant differences. It was empirically shown (see, e.g., \citet{achille17}) that slight deviations early in training can lead to different optima, with substantial differences in resulting models. 
Thus we are forced to accept some {\em fundamental} level of irreproducibility that persists even after fixing all aspects of the training process under our control. 

The goal of this paper is to initiate a formal study of the fundamental limits of irreproducibility.
In particular, we focus on the most basic training process: {\em convex optimization}. At first glance, it might seem surprising that convex optimization procedures can exhibit irreproducibility since they're guaranteed to converge to an optimal solution.
However, in practice, the default convex optimization algorithms are iterative first-order methods; methods that only use a first-order oracle to provide an approximate gradient of the function at a given point, and converge to an approximately optimal solution.  
The first-order oracle is a source of irreproducibility. In stochastic gradient descent, it returns a random vector whose expectation is the true gradient.  The randomness in the stochastic gradients can lead to different outcomes of the optimization process. Similarly, there are numerical errors that can arise in the computation of the gradients due to inherent nondetermism in modern GPUs. Beyond the first-order oracle, irreproducibility may also arise in convex optimization procedures because the initial point is chosen randomly. 
Thus, we attempt to answer the following questions for convex optimization procedures operating with the above sources of 
irreproducibility: \vspace{-3pt}
\begin{list}{{\tiny $\blacksquare$}}{\leftmargin=1.5em}
		\setlength{\itemsep}{-1pt}
    \item What are the fundamental limits of reproducibility for any convex optimization procedure? 
    \item Can we design practical and efficient first-order methods  
    that achieve these 
    limits?
\end{list}  
We study these questions in a variety of settings; including general non-smooth convex functions, smooth convex functions, strongly-convex functions, finite-sum functions, and stochastic convex optimization, under the different sources of irreproducibility mentioned above.
To the best of our knowledge, no prior theoretical work considered such questions.

The primary contribution of this paper is conceptual: the development of a rigorous theoretical framework to study the fundamental limits of reproducibility in convex optimization. The concepts developed in this framework can be extended easily to other settings of interest such as non-convex optimization. The technical contribution of this paper is the development of lower bounds on the amount of reproducibility, and matching upper bounds via analysis of specific first-order algorithms in all the different settings of convex optimization described above. Detailed technical descriptions of the results appear below in \autoref{sec:summary}.

At a high level, our study provides the same message for all the different optimization settings we consider. On the lower bound side, we find that {\em any} first-order method would need to trade-off convergence rate (computational complexity) for more reproduciblity. On the upper bound side, we find that various forms of gradient descent, when run with {\em lower} step-size (and correspondingly, {\em more} iterations) already achieve the fundamental limits of reproducibility. One (somewhat surprising) consequence of our results is that advanced techniques like regularization, variance reduction, and acceleration do not improve reproducibility over standard gradient descent methods.  

We show, for example, that when optimizing a Lipschitz non-smooth convex function $f$ on a bounded domain using a first-order oracle that computes gradients with even {\em vanishingly small} error, two runs of {\em any} first-order method that obtains an $\eps$ suboptimal solution of $f$ after $T$ iterations can generate solutions that are $\Omega(\nicefrac{1}{(\sqrt{T}\eps)})$ apart in $\ell_2$ distance.  
Thus, if we run the method for the standard $T=O(\nicefrac{1}{\eps^2})$ iterations required to obtain $\eps$-approximate solution for a non-smooth function, then obtained solutions can deviate by $\Omega(1)$ distance; i.e., the method is \emph{maximally\/} irreproducible. To ensure that irreproducibility is small, i.e. that the solutions are within a small distance $\gamma$ of each other, we will need to run at least $\Omega(\nicefrac{1}{(\eps^2\gamma^2)})$ iterations. Interestingly, standard gradient descent with appropriately chosen learning rate and number of iterations already achieves this trade-off. 

  Our results demonstrate the challenge of reproducibility even for standard convex optimization. While we provide matching lower and upper bounds in certain general settings, in \autoref{sec:discussion}, we outline several important open directions. 
  Solutions to these problems should enable better understanding of reproducibility even for deep learning. 

\subsection{Summary of results}
\label{sec:summary}

\begin{table}[!t] 
\caption{\small Summary of $(\eps, \delta)$-deviation bounds for various convex optimization settings. 
}
\centering
\begin{tabular}{ |l |c|c|c|  }
\hline \multirow{3}{*}{}  & Stochastic Inexact & Non-stochastic Inexact & Inexact Initialization \\ 
& Gradient Oracle & Gradient Oracle & Oracle \\
&  \autoref{thm:sto}  & \autoref{thm:nonsto}  &  \autoref{thm:init}\\
\hline\hline  
Smooth   & $\Theta(\nicefrac{\delta^2}{(T\eps^2)})$ & $\Theta(\nicefrac{\delta^2}{\eps^2})$ & $\Theta(\delta^2)$  \\
\hline
 Smooth Strongly-Cvx. & $\Theta(\nicefrac{\delta^2}{T}\wedge \eps)$ & $\Theta(  \delta^2  \wedge  \eps  )$   &   $\Theta(e^{-\Omega(T)}\delta^2\wedge \eps)$ \\
\hline
 Nonsmooth   & $\Theta(\nicefrac{1}{(T\eps^2)})$ & $\Theta( \nicefrac{1}{(T\eps^2)}  + \nicefrac{\delta^2}{\eps^2}  )$ &  $\Theta( \nicefrac{1}{(T\eps^2)} +\delta^2)$  \\
\hline
  Nonsmooth Strongly-Cvx. & $\Theta(\nicefrac{1}{T} \wedge {\eps})$ & $\Theta( (\nicefrac{1}{T}  + {\delta^2})  \wedge {\eps} )$   &   $\Theta(\nicefrac{1}{T} \wedge {\eps})$ \\
  \hline  
\end{tabular}
\label{tab:sto1} 
\end{table}

 \autoref{tab:sto1} summarizes our key results for our measure of irreproducibility, $(\eps, \delta)$-deviation (see  \autoref{def:deviation}). The $(\eps, \delta)$-deviation measures the amount of change between the outputs of two independent runs of an optimization algorithm, that is guaranteed to achieve $\eps$-suboptimality after $T$ iterations, when the computations of the algorithm incur errors of magnitude up to $\delta$.
We specifically focus on three different sources of errors: 
i) stochastic gradient oracles, ii) gradient oracles with non-deterministic numerical errors (\autoref{def:inexactgrad}), and iii) inexact initialization for the optimizer (\autoref{def:inexactinit}). We analyze the deviation under these sources of errors for four types of function classes: smooth convex functions, non-smooth but Lipschitz convex functions and strongly-convex restrictions of the two.
Throughout the paper, $a\wedge b$ denotes the minimum between $a$ and $b$.

All lower bounds are for first-order iterative algorithms (\emph{\`{a} la} \cite{nesterov2018lectures}) that we formally define in \eqref{exp:foi}. This is a large class of iterative optimization methods, including Stochastic Gradient Descent (SGD), which construct successive iterates {\em adaptively} in the linear span of previous iterates. Additionally, for smooth costs and stochastic inexact gradient oracle, we have an information theoretic lower bound of $\Omega(\nicefrac{\delta^2}{(T\eps^2)})$ when $\eps\lesssim \delta^2$  (\autoref{thm:lb_sto_info}). We believe such informtation-theoretic lower bounds can be shown for all the settings in this paper. As for the upper bounds, they are all obtained using {\em slowed-down} SGD: i.e. SGD using smaller learning rates and more iterations.
 
For the non-strongly convex cases, one may expect to have high irreproducibility if the minima form a large flat region; however, surprisingly, our upper bounds show that we can always bound the extent of irreproducibility via slowed-down SGD.
In the non-smooth cases, the main observation is that the deviation {\em does not} depend on scale of perturbation by the gradient oracle, i.e., any $\delta>0$ can lead to fairly irreproducible solutions. The non-stochastic gradient oracle setting is strictly harder than the stochastic setting. Naturally, the lower and upper bounds on reproducibility are worse. Interestingly, even though strong convexity implies uniqueness of the global optimum, which intuitively should lead to highly reproducible solutions, we show that when faced with sources of error in computations, the deviation can still be significantly large for any algorithm. 

Finally, we study reproducibility of optimization in machine learning settings. Here we have additional structure such as finite-sum minimization (for optimizing training loss) and stochastic convex optimization (for optimizing population loss). We define appropriate notions of errors for these problems and analyze two settings of particular interest. Our main results (\autoref{thm:ub_finite_nonsmooth} and \autoref{thm:lb_online}) show that despite the additional structure in these problems, the bounds given by \autoref{tab:sto1} for the specific settings are nonetheless {\em tight}. One consequence is that more sophisticated techniques for these problems such as variance reduction don't improve reproducibility.

\subsection{Related work}

\paragraph{Related notions.} In the scientific world, the terms {\em reproducibility} and {\em replicability} are often used interchangeably, but here we distinguish the two, following \citet{pineau2021improving}. Reproducibility refers to the requirement that results obtained by a computational procedure (e.g. an experiment or a statistical analysis of a data set) should be the same (or largely similar) when the procedure is repeated using the same code on the same data, whereas replicability is a different notion that requires that results be reliably the same or similar when the data are changed. The field of statistical hypothesis testing \citep{lehmann-romano} provides rigorous and principled techniques to minimize false discoveries and thereby promote replicability. The notion of {\em algorithmic stability}  can also be seen as quantifying the amount of change in the output when a single data sample is changed. This notion has been extensively studied in the context of providing algorithm-dependent generalization bounds~\citep{bousquet2001algorithmic,KutinN02} and in developing differentially private algorithms~\citep{DworkMNS06,McSherryT07}. In very recent concurrent work, \citet{impagliazzo2022reproducibility} define a notion of {\em replicability} in learning that is quite different from ours: they aim to develop algorithms that generate the {\em exact same output} with reasonable probability given a fresh sample. Note that despite the title of their paper, technically the notion studied is replicability, not reproducibility, since they study the output of algorithms when the input data are changed.

In this paper, we specifically focus on reproducibility: how much can the results of a computation differ when it is re-run on the same data with the same code? Hence, both hypothesis testing and algorithmic stability
are orthogonal to the study in this paper, although some of our upper bounds use similar analysis techniques as algorithmic stability.
On a different note, similar to replicability, the boundary between the notions of irreproducibility and {\em uncertainty} in deep models is rather blurred.  Several papers considered different aspects of uncertainty (see, e.g., \citep{lakshminarayanan17} and references therein), but this line of work has been empirical in nature.  

\paragraph{Inexact oracles in optimization.} The optimization community has studied the consequences of using inexact or error-prone gradient oracles in optimization. Several papers (e.g. \citep{aybat2020robust,devolder2014first,d2008smooth,cohen2018acceleration} have developed bounds on the optimization error incurred due to the use of inexact oracles. While the sources of errors are similar to the ones studied in this paper, the quantities of interest in these papers are convergence rate and optimization error rather than reproducibility. Interestingly, despite the different objective, some of the high-level conclusions are similar to our paper: for example, accelerated gradient methods do not outperform standard classical methods when used with inexact gradient oracles.







\paragraph{Techniques to improve reproducibility in practice.}
Several recent empirical papers considered methods that can reduce levels of irreproducibility in deep models despite nondeterminism in training. Smooth activations \citep{du19, mhaskar97} have been shown \citep{shamir2020smooth} to improve reproducibilty over popular activations, as the Rectified Linear Unit \citep{nair10}.  
Ensembles \citep{dietterich00} leverage diversity of multiple different solutions to produce an average more reproducible one \citep{allen2020towards}. Co-distillation \citep{anil18} and Anti-distillation \citep{shamir2020anti} leverage ensembles to further push deployed models to be more reproducible.  Imposing constraints \citep{bhojanapalli2021reproducibility,shamir18} forces models to prefer some solutions over others, but may come at the cost of reducing model accuracy performance.

\paragraph{Robustness of dynamical systems.} The upper bound results in our paper can be interpreted as robustness results of the (sub)gradient descent dynamics against disturbances.
In particular, our upper bounds can be viewed as some variants of the input-to-state stability~\citep{sontag1995characterizations}  results for the dynamics (see, e.g., \cite[Definition 3.2]{tu2021sample}).

\section{Problem Formulation}
\label{sec:problem}
In this section, we define a quantitative measure of {\em irreproducibility} amenable to a theoretical analysis. Intuitively, a computation is {\em reproducible} if it generates the exact same output given the same inputs on two different runs.  Irreproducibility arises because low-level operations of a computation produce different answers on two runs due to either {\em randomness} or {\em non-determinism}. 

Our computation of interest is convex optimization via first-order methods, where {\em initialization} and {\em gradient computations} are the primary operations that constitute the computation and are subject to errors leading to inexact outputs. A natural measure of irreproducibility is the amount of change in the computed solution to the convex optimization problem under inexact gradient computations or inexact initialization. However, there are two nuances that must be carefully handled here. First, a trivial procedure which ignores its input and outputs a constant solution is perfectly reproducible! Unfortunately, it is perfectly useless as a convex optimization procedure as well. Thus, in order to compare different procedures by their reproducibility metrics, we must assume that the procedures are guaranteed to converge to an optimal solution. The second nuance is that we need to assume that the errors in the gradient or initialization computations are {\em bounded} in some manner. Evidently, without such an assumption, any non-trivial convex optimization procedure will be extremely irreproducible. We now use the above considerations to develop a precise definition of a measure of irreproducibility.

 
\paragraph{Convex function classes.} We assume that the function to be optimized is chosen from a certain class, $\mathcal{F}$, of convex functions, along with their domains, satisfying suitable {\em regularity} conditions (e.g. Lipschitzness, smoothness, strong-convexity, etc.) to develop convergence rates.  
For clarity, we will suppress exact dependence on smoothness, Lipschitzness, and strong-convexity parameters. In particular, ``smooth'' will denote a convex function whose gradients are $O(1)$-Lipschitz continuous, ``non-smooth'' a convex function that is $O(1)$-Lipschitz continuous, and ``strongly-convex'' an $\Omega(1)$-strongly-convex function. 
Here, $O(1)$ and $\Omega(1)$ denote universal constants independent of the dimension or other problem dependent quantities, which we leave unspecified to ease the exposition.

\paragraph{Convex optimization procedures.} 
A {\bf \em first-order} convex optimization procedure for $\mathcal{F}$ is an algorithm that, given any function $f \in \mathcal{F}$, and access to two (potentially noisy) oracles -- an initialization oracle, which generates the initial point, and a gradient oracle, which computes gradients for $f$ at any given query point --  generates a candidate solution $\vx_{\mathrm{out}}$ for the problem of minimizing $f$ over its domain. Note that the algorithm can only access $f$ via the oracles provided. We call such an algorithm {\bf \em $\eps$-accurate} if it guarantees that $\E f(\vx_{\mathrm{out}}) - \inf_{\vx \in \text{dom} f} f(\vx) \leq \eps$, where the expectation is over any randomness in the computation of $\vx_{\mathrm{out}}$. Several of our lower bounds require more structure for the algorithm: specifically, a {\bf \em first-order iterative} (\sfa) algorithm (\emph{\`{a} la} \cite{nesterov2018lectures}) is one that starting from the point $\vx_0$ generated by the initialization oracle, constructs successive iterates
\begin{align} \tag{\sf FOI} \label{exp:foi}
     \vx_t =  \vx_0 - \textstyle\sum_{i=0}^{t-1} \la^{(t)}_i g(\vx_i)\quad \text{for some }\lambda_i^{(t)},~ i=0,\dots, t-1,
\end{align} 
where $g(\vx_i)$ is the output of the gradient oracle query at $\vx_i$, and outputs $\vx_T$ for some integer $T > 0$.
We emphasize that for all $t$, the coefficients $\lambda^{(t)}_i$ can be chosen adaptively based on all the previous computations.
The above class of algorithms is a canonical one to consider when proving lower bounds against gradient oracle based optimization algorithms. We refer readers to \cite[\S 2.1.2]{nesterov2018lectures} for more background.
For the case of nonsmooth costs, we additionally assume that the coefficient of the latest gradient is nonzero, i.e., $\la^{(t)}_{t-1}\neq 0$ for all $t$. 
We also note that one of our lower bound results (\autoref{thm:lb_sto_info}) is \emph{information-theoretic} (in the sense of  \cite{nemirovskij1983problem}).

\paragraph{Sources of errors in computation.} Errors arise due to inexactness in the outputs of the initialization or gradient oracles. Queries to these oracles on two different runs of the same algorithm might yield different outputs, but we will control the errors by assuming that the outputs are close to some reference point (that remains fixed over different runs) in a suitable metric. 
\begin{definition}[$\delta$-bounded inexact initialization oracle]
\label{def:inexactinit}
  Given a function $f \in \mathcal{F}$ and a reference initialization point $\vx_0^\text{ref} \in \text{dom} f$, a {\bf \em $\delta$-bounded} inexact initialization oracle for $f$ is one that generates an initial point $\vx_0 \in \text{dom} f$ such that $\|\vx_0 - \vx_0^{\text{ref}}\| \leq \delta$. 
\end{definition}
The gradient computation oracle is said to be {\bf \em $\delta$-bounded} if  for any $f \in \mathcal{F}$ and any point $\vx \in \text{dom} f$, it outputs a vector $g(\vx)$ such that $\E\norm{g(\vx) -\nabla f(\vx)}^2 \leq \delta^2$ for some $\nabla f(\vx) \in \partial f(\vx)$, where the expectation is over any randomness in the computation of $g(\vx)$. We consider both {\em stochastic} and {\em non-stochastic} inexact $\delta$-bounded gradient oracles. A stochastic gradient oracle has the additional property that its output $g(\vx)$ is a random vector such that $\E g(\vx) = \nabla f(\vx)$, with different queries being independent of each other. Stochastic inexact gradient oracles arise naturally in machine learning applications due to randomness in minibatching. 
Non-stochastic inexact gradient oracles model non-deterministic numerical errors due to the accumulation of floating point errors; for giant machine learning models with billions of parameters, individual floating point errors could add up to a noticeable large error.
We formally define the two types of inexact oracles below. 
\begin{definition}[$\delta$-bounded inexact gradient oracle]
\label{def:inexactgrad}
Given a function $f \in \mathcal{F}$, and $\vx\in \text{dom} f$, let $\partial f(\vx)$ denote the sub-differential of $f$ at $\vx$.
\vspace{-7pt}
\begin{list}{{\tiny $\blacksquare$}}{\leftmargin=0.5em}
		\setlength{\itemsep}{-4pt} 
\item[(a)] A  {\bf stochastic inexact}  $\delta$-bounded gradient oracle outputs a random vector $g(\vx)$ such that $\E{g(\vx)}=\nabla f(\vx)$ and $\E\norm{g(\vx) -\nabla f(\vx)}^2 \leq \delta^2$ for some $\nabla f(\vx) \in \partial f(\vx)$. The expectation is over the randomness in $g(\vx)$ which is also assumed to be independent for each oracle call. 
  \item[(b)] A  {\bf non-stochastic inexact}  $\delta$-bounded gradient oracle outputs a non-deterministic vector $g(\vx)$ such that
  $\norm{g(\vx)-\nabla f(\vx)}^2 \leq \delta^2$ for some $\nabla f(\vx) \in \partial f(\vx)$.
\end{list}
\end{definition}  

\paragraph{Measure of irreproducibility.}
Let $\alg$ be a first-order, $\eps$-accurate convex optimization procedure for $\mathcal{F}$ with access to either a $\delta$-bounded initialization oracle (and an exact gradient oracle), or a $\delta$-bounded gradient oracle (and an exact initialization oracle). 
The {\bf \em $(\eps, \delta)$-deviation} is
\begin{definition}[$(\eps,\delta)$-deviation]
\label{def:deviation}
Given a function  $f \in \mathcal{F}$, let $\vx_{f}$ and $\vx'_{f}$ denote the outputs of $\alg$ on two independent runs of $\alg$  that result in $\eps$-accurate solutions.\vspace{-7pt}
\begin{list}{{\tiny $\blacksquare$}}{\leftmargin=0.5em}
		\setlength{\itemsep}{-4pt} 
  \item[(a)] If a stochastic inexact $\delta$-bounded gradient oracle is used, the $(\eps,\delta)$-deviation of $\alg$ is defined as $\sup_{f \in \mathcal{F}} \E \|\vx_f - \vx'_f\|^2$ where the randomness is over the stochastic oracle in the two runs.
  \item[(b)] If either an inexact $\delta$-bounded initialization oracle or a $\delta$-bounded non-stochastic inexact gradient oracle is used, the $(\eps,\delta)$-deviation of $\alg$ is defined as $\sup_{f \in \mathcal{F}} \sup 
 \|\vx_f - \vx'_f\|^2$,
  where the inner supremum is over the two runs.
\end{list}
\end{definition}
Here we note that $\vx_f$ is not necessarily the last iterate of a first-order algorithm. Depending on cases, sometimes we choose $\vx_f$ to be the (weighted) average iterate.
We consider $\|x_f - x'_f\|^2$ for the notion of deviation instead of $\|x_f - x^*\|^2$; note that $x^*$ may not even be unique without strong convexity.

\paragraph{Other notions?} Alternate definitions are certainly plausible. For example, in machine learning applications, the computed solution $\vx$ may be used as a parameter vector for a predictor function $g_{\vx}$ which maps inputs $z$ to real-valued predictions $\hat{y}$ whose quality is measured by a loss function $\ell(\hat{y}, y)$ where $y$ is the true label of $z$.
Let $\vx$ and $\vx'$ be outputs of two different runs of the optimization algorithm. Then one can also define $(\eps, \delta)$-deviation based on \emph{prediction reproducibility}: e.g. $\sup_{z} |g_{\vx}(z) - g_{\vx'}(z)|$ or $\E_{z} |g_{\vx}(z) - g_{\vx'}(z)|$, or \emph{loss reproducibility}: e.g. $\sup_{(z, y)} |\ell(g_{\vx}(z), y) - \ell(g_{\vx'}(z), y)|$ or $\E_{(z, y)} |\ell(g_{\vx}(z), y) - \ell(g_{\vx'}(z), y)|$, where the expectation is over the distribution of the examples. Nevertheless, we adopt \autoref{def:deviation}, which is based on \emph{parameter reproducibility}, in this paper for the following reasons:
\begin{itemize}[leftmargin=*]
\item The convex optimization problems studied in this paper are {\bf more basic/fundamental} than more structured ML optimization. To the best of our knowledge, there is no pre-existing theory even for this basic setting. Parameter reproducibility is a more natural definition here since there is no notion of prediction or loss. 
\item In many ML applications, the predictor function $g_{\vx}$ is Lipschitz in $\vx$ for any input $z$. In such cases, $(\eps, \delta)$-deviation bounds for parameter reproducibility immediately transform into $(\eps, \delta)$-deviation bounds for prediction reproducibility. Similar transformations are also generally possible $(\eps, \delta)$-deviation bounds for loss reproducibility.
\item Without knowledge of how a learned parametric function is deployed for making predictions, it is difficult to analyze prediction reproducibility even in ML settings.  Hence, parameter deviations provide a reasonable first approximation. Furthermore, in real systems, we often optimize multiple metrics (e.g. performance on sub-segments of populations, for fairness). Parameter reproducibility gives greater assurance on all metrics. A similar argument applies when the test distribution is different from the training distribution.
\item Finally, several recent works~\citep{shamir2020smooth, damour20} have empirically observed that even if two different runs of the algorithm resulted in parameters that have nearly the same loss, the predictions on test examples could be very different. One reason this happens is because {\em surrogate} losses are used in place of the true metric for optimization. So simply achieving loss reproducibility may not be sufficient for practical applications.
\end{itemize}

\section{Reproducibility with Stochastic Inexact Gradient Oracles}

In this section we consider reproducibility of optimizing a convex function $f \in \mathcal{F}$ where we can access $f$ only via a stochastic gradient oracle (see \autoref{def:inexactgrad}). This setting covers several important ML optimization scenarios, e.g., when the training data is randomly sampled from a  population or  when the selection of mini-batches is randomized. Our main result is the following theorem.

\begin{theorem}\label{thm:sto}
For any $\eps, \delta > 0$, and number of iterations $T$, the $(\eps, \delta)$-deviation for optimizing convex functions with a stochastic inexact gradient oracle is as follows. Unless indicated otherwise, the lower bounds hold for any \sfa algorithm, and the upper bound is achieved by stochastic gradient descent for $T = \Omega(\nicefrac{1}{\eps^2})$ in the non-strongly-convex settings and $T = \Omega(\nicefrac{1}{\eps})$ in the strongly-convex settings. \vspace{-8pt}
\begin{list}{{\tiny $\blacksquare$}}{\leftmargin=0.5em}
		\setlength{\itemsep}{-4pt} 
\item \textbf{Smooth functions}: $(\eps, \delta)$-deviation is $\Theta(\nicefrac{\delta^2}{(T\eps^2)})$.  
Furthermore,  for the case $\eps\lesssim \delta^2$ there is a matching {\bf information theoretic lower bound}.

\item \textbf{Smooth and strongly convex functions}: $(\eps, \delta)$-deviation is $\Theta(\nicefrac{\delta^2}{T}\wedge  \eps)$.  

\item \textbf{Lipschitz (non-smooth) functions}: $(\eps, \delta)$-deviation is $\Theta(\nicefrac{1}{(T\eps^2)})$.  
\item \textbf{Lipschitz (non-smooth) and strongly convex functions}:
$(\eps, \delta)$-deviation is $\Theta(\nicefrac{1}{T}\wedge  \eps)$. 
\end{list}
\end{theorem}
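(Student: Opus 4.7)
The plan is to prove the four upper bounds via a unified coupling analysis of slowed-down stochastic gradient descent, and to match them with instance-based lower bounds combining the \sfa structure~\eqref{exp:foi} with information-theoretic ideas (Cramer--Rao for the smooth rows, Le~Cam for the non-smooth ones).

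\textbf{Upper bounds.} I would run two independent copies $\{x_t\}$ and $\{x_t'\}$ of SGD from a common initial point with identical step size $\eta$, driven by independent stochastic gradient draws $g_t,g_t'$. Expanding $\|x_{t+1}-x_{t+1}'\|^2$ and using $\E[g_t\mid x_t]=\nabla f(x_t)$ together with $\E\|g_t-\nabla f(x_t)\|^2\le\delta^2$ produces the master recursion
\[
\E\|x_{t+1}-x_{t+1}'\|^2 \leq \E\|x_t-x_t'\|^2 - 2\eta\,\E\langle x_t-x_t',\nabla f(x_t)-\nabla f(x_t')\rangle + \eta^2\,\E\|\nabla f(x_t)-\nabla f(x_t')\|^2 + 2\eta^2\delta^2.
\]
For smooth $f$ with $\eta\le 1/L$, co-coercivity cancels the two middle deterministic terms and leaves $\E\|x_T-x_T'\|^2\le 2T\eta^2\delta^2$; pairing this with the standard $O(1/(\eta T)+\eta\delta^2)$ SGD bound and tuning $\eta=\Theta(1/(T\eps))$, which is feasible whenever $T\gtrsim\delta^2/\eps^2$, yields $O(\delta^2/(T\eps^2))$. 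For non-smooth $f$ plain convexity keeps the cross term nonnegative while $\|\nabla f(x_t)-\nabla f(x_t')\|^2=O(G^2)$ is absorbed into the constants, giving $O(1/(T\eps^2))$. For strongly convex $f$ the cross term upgrades to a genuine $(1-c\eta)$-contraction, so the recursion reaches stationary variance $O(\eta\delta^2)$ (smooth) or $O(\eta G^2)$ (non-smooth); running $T\gtrsim 1/\eps$ iterations with $\eta=\tilde\Theta(1/T)$ yields the $\Theta(\delta^2/T)$ and $\Theta(1/T)$ rates, while the trivial $O(\eps)$ triangle-inequality bound around the unique minimizer supplies the $\wedge\,\eps$ cap.

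\textbf{Lower bounds.} For the smooth cases I would take the quadratic family $f_v(x)=\tfrac{H}{2}\|x-v\|^2$ on $\R^d$ (with $d\ge T$) and choose the curvature $H$ at the end. Any $\eps$-accurate algorithm must satisfy $\E\|x_f-v\|^2\le 2\eps/H$ for every $v$, which by a minimax bias/variance argument forces $\E[x_f]\approx v$ and hence $\E\|x_f-v\|^2\geq \tr\,\mathrm{Cov}(x_f)$. Because the iterate sequence is confined by~\eqref{exp:foi} to the span of the $T$ noisy gradients, a Cramer--Rao computation on the Fisher information of the observations yields $\tr\,\mathrm{Cov}(x_f)\gtrsim\delta^2/(TH^2)$; combining the two inequalities forces $H\gtrsim\delta^2/(T\eps)$ so that two independent runs deviate by $\E\|x_f-x_f'\|^2\ge 2\tr\,\mathrm{Cov}(x_f)=\Omega(\delta^2/(T\eps^2))$. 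In the strongly convex row $H=\Omega(1)$ is fixed by the class, producing $\Omega(\delta^2/T)$ capped at $\eps$. For the non-smooth rows I would use the piecewise-linear family $f_v(x)=\sum_i|\langle e_i,x\rangle-v_i|$ and apply Le~Cam's two-point method: subgradients have $\ell_2$ norm $\Theta(1)$ irrespective of $\delta$, so perturbing $v$ by $\Theta(1/\sqrt{T\eps^2})$ along one coordinate is statistically indistinguishable after $T$ oracle queries, forcing the $\Omega(1/(T\eps^2))$ deviation independent of $\delta$; adding an $\Omega(1)$ quadratic handles the strongly convex variant with the $\wedge\,\eps$ cap.

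\textbf{Information-theoretic lower bound and main obstacle.} For the stronger non-\sfa lower bound in the smooth regime $\eps\lesssim\delta^2$ I would place a Gaussian prior $v\sim\mathcal{N}(0,\sigma^2 I)$ on the minimizer of the quadratic family and apply a Gaussian-channel / I-MMSE argument: the total mutual information any $T$-query algorithm extracts about $v$ is bounded by $O(TH^2/\delta^2)$ regardless of query strategy, forcing posterior MSE $\Omega(\delta^2/(TH^2))$ and the same $\Omega(\delta^2/(T\eps^2))$ conclusion. The main technical obstacle I expect is the precise calibration of the hardness parameter $H$ in the smooth lower bound: it must be taken as small as $\delta^2/(T\eps)$ for Cramer--Rao and $\eps$-feasibility to bind simultaneously, and the ambient dimension must be at least $T$ so that the \sfa span constraint does not collapse the effective parameter space. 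On the upper-bound side the mirror obstacle is realizing that the right step size is the ``slowed-down'' choice $\eta=\Theta(1/(T\eps))$ rather than the textbook $\eta=\Theta(\eps/\delta^2)$, since only the former converts extra iterations into improved reproducibility.
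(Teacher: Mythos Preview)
Your upper-bound coupling is essentially the paper's: compare SGD to the exact-gradient trajectory, use co-coercivity (smooth) or Lipschitzness (non-smooth) to control the drift term, and tune $\eta=\Theta(1/(T\eps))$. That part is correct.

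The lower bounds have two problems. First, in the smooth case your calibration of $H$ is wrong. The relation ``$H\gtrsim\delta^2/(T\eps)$'' you derive is merely a feasibility condition and is not the one that binds. To make biased Cramer--Rao nontrivial you need $|b'(\theta)|$ bounded away from $-1$, which via the accuracy constraint $|b(\theta)|^2\le 2\eps/H$ forces $H=\Theta(\eps)$---and this is exactly the curvature the paper uses (its $200\eps$). With $H=\Theta(\eps)$ the Fisher information is $\Theta(T\eps^2/\delta^2)$ and Cramer--Rao gives $\Omega(\delta^2/(T\eps^2))$ directly; with your $H=\delta^2/(T\eps)$ the same computation yields $\delta^2/(TH^2)=T\eps^2/\delta^2$, the reciprocal of what you want. (Separately, the paper's \sfa lower bound for this row avoids statistics entirely: it places the cost in a single coordinate with gradient $O(\eps)$, forcing $\sum_t|\lambda_t^{(T)}|\gtrsim 1/\eps$, routes Rademacher noise to $T$ dummy coordinates, and reads off deviation $\delta^2\sum_t(\lambda_t^{(T)})^2\ge(\delta^2/T)(\sum_t|\lambda_t^{(T)}|)^2$ by Cauchy--Schwarz.)

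Second, and more seriously, your non-smooth lower bound does not work. The target $\Omega(1/(T\eps^2))$ is independent of $\delta$, so whatever mechanism produces it must survive $\delta\to 0^+$. Le~Cam on $f_v(x)=\sum_i|x_i-v_i|$ cannot: the subgradient reveals $\sign(x-v)$ coordinate-wise, and with noise of scale $\delta$ one recovers $v$ to any fixed precision after $O(\log(1/\delta))$ queries per coordinate, so two independent runs output essentially the same point and any indistinguishability bound vanishes with $\delta$. The paper's construction is of a different nature: it builds a $\max$-of-affine function in which an infinitesimal $\pm\delta$ Rademacher perturbation in one block of coordinates flips which argument of the $\max$ is active, and this in turn flips an $O(1)$-magnitude subgradient in a second block. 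This amplification at the kink---tiny noise in one place causing an order-one subgradient jump elsewhere---is what decouples the deviation from $\delta$, and it has no counterpart in a two-point statistical argument. The strongly-convex non-smooth row inherits the same gap.
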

This theorem is proved in several pieces, one corresponding to each setting (i.e., smooth, smooth \& strongly convex etc.) and type of bound (i.e., upper or lower). The precise details are given in \autoref{sec:summary-app}.
A few remarks are in order.

\textbf{Smoothness}: Intuitively, smoothness would ensure that a slight error in gradient computation need not imply catastrophic deviation in the iterates. Our matching upper and lower bounds confirm this intuition.

\textbf{Non-smoothness}: Our results show that even a slight amount of noise in the gradients can lead to drastic irreproducibility in the non-smooth case. Intuitively, the reason for this phenomenon is that non-smooth functions have non-differentiable points where the slightest amount of noise in the gradients can lead to drastically different behavior. This is in line with the empirical observations of \citet{shamir2020smooth}.

\textbf{Strong convexity}:  Note that the deviation is smaller than other cases due to the existence of a unique minimizer; the $\eps$-accuracy in the cost already implies  an $O(\eps)$ upper bound on the deviation.  

Furthermore, in all the settings, our results show that using an algorithm with a larger number of iterations is more helpful; intuitively that is because more gradient samples from the oracle can help reduce the sample noise as an averaging. For example, for smooth functions, if a gradient descent type method is run for the standard $T = O(\nicefrac{1}{\eps^2})$ iterations, then the solution obtained might still suffer deviation of $\delta^2$, which is independent of $\eps$. Thus, in order to obtain low deviation, we are forced to run the method for $\omega(\nicefrac{1}{\eps^2})$ iterations. 
We also note that by relying on the standard convergence rate lower bounds~\citep{nemirovskij1983problem}, the requirement on $T$ e.g., $T=\Omega(1/\eps^2)$ for the non-strongly convex setting, is without loss of generality.
Finally, we remark that the lower bound for smooth functions is information theoretic (i.e., holds against \emph{any} algorithm) for $\eps \lesssim \delta^2$.

\section{Reproducibility with Non-Stochastic Inexact Gradient Oracles}
In this section, we study reproducibility for optimizing a function $f\in \mathcal{F}$ with non-stochastic inexact gradient oracle access (\autoref{def:inexactgrad}). In particular, we establish lower and  upper  bounds for first-order algorithms \eqref{exp:foi} on the $(\eps,\delta)$-deviation (\autoref{def:deviation}) despite initialization with the same point $x_0 \in \text{dom} f$. Recall that this setting allows us to capture reproducibility challenges due to non-deterministic numerical errors introduced by a computing device (like GPUs) during floating point computations.  The main high-level message here is that unlike in the stochastic gradient oracle case, in the non-stochastic gradient oracle setting the iteration complexity $T$ has little to no effect on the $(\eps,\delta)$-deviation: intuitively this is because unlike the stochastic setting, it is not possible to reduce the error in the gradients by taking more samples and averaging.

\begin{theorem}\label{thm:nonsto}
For $D = O(1)$, $\eps, \delta >0$ such that $\delta \leq \nicefrac{\eps}{(2D)}$, and number of iterations $T$, the $(\eps, \delta)$-deviation for optimizing convex functions  whose optimum has norm at most $D$ with a non-stochastic inexact gradient oracle is as follows. 
Unless indicated otherwise, the lower bounds hold for any \sfa algorithm, and the upper bound is achieved by projected gradient descent for $T = \Omega(\nicefrac{1}{\eps})$ in the smooth settings and non-smooth strongly convex setting, and $T = \Omega(\nicefrac{1}{\eps^2})$ in the non-smooth setting.
\vspace{-8pt}
\begin{list}{{\tiny $\blacksquare$}}{\leftmargin=0.5em}
		\setlength{\itemsep}{-4pt} 
\item \textbf{Smooth functions}: $(\eps, \delta)$-deviation is $\Theta(\nicefrac{\delta^2}{\eps^2})$.\footnote{Note that \autoref{lem:without_proj} provides a similar upper bound result without the assumption that the optimum lies in a ball of radius $O(1)$.}

\item \textbf{Smooth and strongly convex functions}: $(\eps, \delta)$-deviation is $\Theta(\delta^2\wedge \eps)$.

\item \textbf{Lipschitz (non-smooth) functions}: $(\eps, \delta)$-deviation is $\Theta(\nicefrac{1}{(T\eps^2)} +\nicefrac{\delta^2}{\eps^2})$. 

\item \textbf{Lipschitz (non-smooth) and strongly convex functions}: $(\eps, \delta)$-deviation is $\Theta( (\nicefrac{1}{T} +\delta^2 )\wedge \eps )$. 
\end{list}
\end{theorem}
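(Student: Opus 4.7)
The plan is to establish the four upper bounds and four lower bounds in Theorem~\ref{thm:nonsto} separately, but via two uniform strategies. All upper bounds are realized by (projected) (sub)gradient descent with a judicious choice of step size $\eta$ and iteration count $T$, analyzed by coupling two independent runs that share the same initialization $x_0$ but receive different $\delta$-bounded inexact gradients $g_t, g'_t$. All lower bounds arise from explicit adversarial convex functions on which any \sfa method, despite attaining $\eps$-accuracy, must output two iterates that differ by the claimed amount; the adversary controls the sign of the gradient error on each query. The $\wedge\,\eps$ refinements in the strongly convex cases follow for free from the trivial bound $\|x_T - x'_T\|^2 \le 4(\|x_T - x^\star\|^2 + \|x'_T - x^\star\|^2) = O(\eps)$ that $\eps$-accuracy combined with $\Omega(1)$-strong convexity implies.

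For the upper bounds, let $\Delta_t = \|x_t - x'_t\|$. In the smooth setting, taking $\eta \le 1/L$ makes $x \mapsto \proj(x - \eta\nabla f(x))$ non-expansive, so $\Delta_{t+1} \le \Delta_t + 2\eta\delta$ and hence $\Delta_T \le 2\eta T\delta$; since $\eps$-accuracy with $D = O(1)$ forces $\eta T = \Theta(1/\eps)$, we recover $\Delta_T^2 = O(\delta^2/\eps^2)$. In the smooth strongly convex case, the same map is a $(1-\eta\mu)$-contraction, so the recursion $\Delta_{t+1} \le (1-\eta\mu)\Delta_t + 2\eta\delta$ has fixed point $O(\delta/\mu)$, giving $\Delta_T^2 = O(\delta^2)$; combined with the trivial $O(\eps)$ bound this yields $O(\delta^2 \wedge \eps)$. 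For non-smooth $f$ non-expansiveness fails, so I would telescope the squared-distance recursion $\Delta_{t+1}^2 \le \Delta_t^2 - 2\eta\langle x_t - x'_t, h_t - h'_t\rangle + 4\eta\delta\Delta_t + O(\eta^2)$, where $h_t \in \partial f(x_t)$, $h'_t \in \partial f(x'_t)$; monotonicity of $\partial f$ kills the cross term, and setting $M = \max_{t\le T}\Delta_t$ gives $M^2 \lesssim \eta\delta T M + \eta^2 T$, hence $\Delta_T^2 = O(\eta^2 T^2 \delta^2 + \eta^2 T)$. Optimizing $\eta, T$ subject to the $\eps$-accuracy tradeoff $\eps \sim 1/(\eta T) + \eta$ then yields $\Theta(1/(T\eps^2) + \delta^2/\eps^2)$, and adding a $(1-\eta\mu)$ contraction factor handles the strongly convex case.

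For the lower bounds, I would use adversarial convex functions with a flat (or weakly curved) direction along which the inexact-gradient adversary can accumulate a signed bias of size $\delta$ per iteration. In the smooth setting, take $f(x_1, x_2) = \tfrac{1}{2}x_1^2$ on an $O(1)$-radius ball; the adversary reports $(x_1, +\delta)$ in one run and $(x_1, -\delta)$ in the other, and any \sfa method integrates this into an $\Omega(\eta T\delta) = \Omega(\delta/\eps)$ separation in the $x_2$-coordinate while keeping both runs $\eps$-accurate (the flat direction does not contribute to $f$). The smooth strongly convex case replaces the flat direction by a $\mu$-curved one, so the cumulative bias saturates at $\Omega(\delta/\mu) = \Omega(\delta)$ rather than growing linearly. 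For the non-smooth bounds I would adapt the Nesterov-style hard instances invoked in the proof of \autoref{thm:sto}: the $\Omega(1/(T\eps^2))$ term is recovered by the same flat-ridge construction that worked in the stochastic case, and the additional $\Omega(\delta^2/\eps^2)$ term arises from an adversary that biases the subgradient selection along a non-differentiable ridge whose subdifferential has diameter substantially exceeding $\delta$.

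The main obstacle will be the non-smooth lower bounds: the adversarial construction must simultaneously (i) admit two $\delta$-valid gradient sequences producing iterates $\Omega(1/(\sqrt{T}\eps) + \delta/\eps)$ apart, (ii) keep both runs $\eps$-accurate on a single fixed convex $f$, and (iii) respect the Lipschitz (and, in the strongly convex case, the strong convexity) assumption. Balancing these constraints requires carefully tuning the scale of the non-differentiable features and the adversary's subgradient-selection rule, mirroring Nesterov's hard-instance template but with adversarial sign choices replacing the stochastic averaging used for \autoref{thm:sto}.
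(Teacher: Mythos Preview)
Your upper-bound strategy is essentially the paper's: non-expansiveness (or contraction) of the projected gradient map in the smooth cases, and a telescoped squared-distance recursion with monotonicity of $\partial f$ in the non-smooth cases. That part is fine.

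The gap is in your lower-bound constructions for the smooth and smooth strongly convex cases. Your proposed hard instance $f(x_1,x_2)=\tfrac12 x_1^2$ with adversarial noise $\pm\delta$ in the $x_2$-coordinate does \emph{not} force an $\Omega(\delta/\eps)$ separation against a general \sfa algorithm. You write that ``any \sfa method integrates this into an $\Omega(\eta T\delta)=\Omega(\delta/\eps)$ separation,'' but a general \sfa method has no fixed step size $\eta$; what controls the separation in $x_2$ is $\delta\bigl|\sum_i \lambda_i^{(T)}\bigr|$, and nothing in your construction forces this sum to be $\Omega(1/\eps)$. Concretely: starting from $x_0=(D,0)$, the \sfa that takes a single step with $\lambda_0^{(1)}=1$ lands at $(0,\mp\delta)$, which is exactly optimal in $f$, yet the two runs differ by only $2\delta$, not $\delta/\eps$. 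The gradient of your function in the ``progress'' direction has magnitude $\Theta(1)$, so one unit-sized coefficient suffices for $\eps$-accuracy.

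The paper's fix is precisely this missing idea: design the cost so that the gradient in the direction the algorithm must traverse has magnitude $O(\eps)$ (the smooth case uses $f(x,y)=4\eps\,\euF(y+1)$ with $\partial_y f\in[0,8\eps]$), which forces $\sum_t |\lambda_t^{(T)}|\gtrsim 1/\eps$ for any $\eps$-accurate \sfa. Then the adversary's noise is tied to this same direction so that the $x$-coordinate of the output is a fixed multiple $\tfrac{\delta}{4\eps}$ of the $y$-coordinate, yielding $\Omega(\delta^2/\eps^2)$ deviation. In the strongly convex case the paper uses $f(x,y)=y+\tfrac{\mu}{2}y^2+\tfrac{\mu}{2}x^2$ and proves by induction that $x_t=\delta\, y_t$ holds for \emph{every} \sfa iterate; since $\eps$-accuracy forces $|y_T|\approx 1/\mu$, the deviation $|x_T|^2\approx \delta^2/\mu^2$ follows. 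Your saturation-at-$\delta/\mu$ heuristic is again a GD-specific calculation, not an argument against arbitrary \sfa coefficients. For the non-smooth lower bounds you are vaguer, but the same issue lurks: you will need a construction (the paper's $\euG$ helper) that couples the noise coordinate to the progress coordinate in a way that survives arbitrary, possibly negative, \sfa coefficients.
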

Like \autoref{thm:sto}, this theorem is proved in several pieces; see \autoref{sec:summary-app}. Some remarks follow:

{\bf Acceleration v.s. reproducibility?}
  Note that for smooth functions, accelerated methods can get $\eps$-suboptimality in only $T=O(\nicefrac{1}{\sqrt{\eps}})$ iterations. A natural question is if we can achieve similar $(\eps, \delta)$-deviation for such accelerated methods. However, it is known that accelerated methods are unstable (see e.g., \citep{devolder2014first,attia2021instability}), and cannot even achieve the desired $\eps$-accuracy under the inexact oracle model. Hence, we conjecture that iteration complexity of $\Omega(1/\eps)$ is necessary to achieve the desired reproducibility.

 {\bf Nonsmoothness:} Note that the lower bound shows that if gradient descent style methods are run for the standard $T=1/\eps^2$ iterations, then the solutions can be forced to have $\Omega(1)$  deviation. Hence, to ensure reproducibility, any method would need to have a slower convergence rate than  $T=1/\eps^2$. 

\section{Reproducibility with Inexact Initialization Oracles}\label{sec:init}

We now study reproducibility for optimizing a function $f\in \mathcal{F}$ with an inexact initialization oracle (\autoref{def:inexactinit}) and establish tight bounds on the $(\eps,\delta)$-deviation for \sfa algorithms. An inexact initialization oracle models situations where the initial values of the parameters are set randomly, or incur some non-deterministic numerical error.

\begin{theorem}\label{thm:init}
For any $\eps, \delta > 0$ and number of iterations $T$,  the $(\eps, \delta)$-deviation for optimizing convex functions  with an inexact initialization oracle is as follows. 
In all cases, the lower bounds hold for any \sfa algorithm, and the upper bound is achieved by gradient descent for $T = \Omega(\nicefrac{1}{\eps})$ in the smooth settings and non-smooth strongly convex setting, and $T = \Omega(\nicefrac{1}{\eps^2})$ in the non-smooth setting. 
\vspace{-8pt}
\begin{list}{{\tiny $\blacksquare$}}{\leftmargin=0.5em}
		\setlength{\itemsep}{-4pt} 
\item \textbf{Smooth functions}: $(\eps, \delta)$-deviation is $\Theta (\delta^2)$.
\item \textbf{Smooth and strongly convex functions}: $(\eps, \delta)$-deviation is $\Theta((\exp(-\Omega(T))\delta^2 \wedge \eps )$.

\item \textbf{Lipschitz (non-smooth) functions}: $(\eps, \delta)$-deviation is $\Theta(\nicefrac{1}{(T\eps^2)}+\delta^2)$.

\item \textbf{Lipschitz (non-smooth) and strongly convex functions}: $(\eps, \delta)$-deviation is $\Theta(\nicefrac{1}{T}\wedge \eps)$.
\end{list}
\end{theorem}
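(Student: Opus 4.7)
The plan is to prove each of the four bullets via matching upper and lower bounds; all upper bounds follow from running (projected) gradient descent in parallel from the two initial points $x_0,x_0'$ with $\|x_0 - x_0'\|\leq \delta$, while the lower bounds exhibit, in each setting, an adversarial $f\in \mathcal{F}$ and a pair of initializations forcing any \sfa algorithm attaining $\eps$-accuracy to produce outputs at the claimed separation.

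For the smooth upper bounds I would invoke the non-expansiveness $\|(x - \eta\nabla f(x)) - (y - \eta\nabla f(y))\|\leq \|x-y\|$ of the gradient step for $\eta\leq 1/L$, which iterated gives $\|x_T - x'_T\|^2\leq \delta^2$. Strong convexity sharpens the step to a contraction by $(1 - \Theta(\mu/L)) = 1 - \Theta(1)$ and hence produces $\exp(-\Omega(T))\delta^2$; the extra $\wedge\eps$ factor follows because $\eps$-accuracy plus strong convexity forces $\|x_T - x^*\|^2 = O(\eps)$, so triangle inequality caps the deviation at $O(\eps)$. For non-smooth $L$-Lipschitz $f$, the convexity identity $\langle g_t - g'_t, x_t - x'_t\rangle\geq 0$ gives
\[
  \|x_{t+1} - x'_{t+1}\|^2 \leq \|x_t - x'_t\|^2 + \eta_t^2\|g_t - g'_t\|^2,
\]
and hence $\|x_T - x'_T\|^2 \leq \delta^2 + 4L^2\sum_t\eta_t^2$. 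Using a deliberately small constant step $\eta = \Theta(1/(T\eps))$ (rather than the usual $\Theta(1/\sqrt T)$), averaged subgradient descent still attains $\eps$-accuracy when $T = \Omega(1/\eps^2)$, while $\sum_t\eta_t^2 = \Theta(1/(T\eps^2))$, giving $O(\delta^2 + 1/(T\eps^2))$. The strongly-convex non-smooth case uses the textbook step $\eta_t = 2/(\mu t)$, the bound $\|x_T - x^*\|^2 = O(1/(\mu^2 T))$, and triangle inequality, capped at $O(\eps)$ exactly as in the smooth strongly convex case.

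The easy portion of the lower bounds is the $\Omega(\delta^2)$ in the smooth and non-smooth non-strongly-convex bullets: take $f\equiv 0$ on a domain containing $x_0,x_0'$ with $\|x_0-x_0'\|=\delta$; every point is $\eps$-optimal, any \sfa algorithm's output equals its initialization (all queried gradients vanish), and deviation is exactly $\delta^2$. For smooth strongly convex, I would test against the quadratic family $f(x) = \tfrac12\langle x-a, M(x-a)\rangle$ with Hessian $M$ drawn from the standard Nemirovski--Yudin/Chebyshev construction, which forces any degree-$T$ polynomial in $M$ with constant term $1$ to have operator norm at least $\exp(-\Omega(T))$; paired with symmetric initializations $\pm \delta/2$ along a fixed direction, the two runs produce antipodal outputs by symmetry and are separated by $\Omega(\exp(-\Omega(T))\delta^2)$, capped by $O(\eps)$ exactly as in the upper bound.

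The main obstacle is the $\Omega(1/(T\eps^2))$ term in the non-smooth lower bound, which has to persist as $\delta\to 0$, so the perturbation is supplied by the initialization but the deviation does not shrink with $\delta$. My plan is to adapt the construction already invoked for the non-smooth stochastic inexact gradient lower bound in \autoref{thm:sto}: a high-dimensional piecewise-linear convex function whose ridges of non-differentiability partition the domain into cells with qualitatively distinct subgradient directions, paired with a symmetric pair $\pm\delta e_1$ of initializations on opposite sides of such a ridge. Although the gradient oracle is now exact, the two runs route into different cells from the first step and follow mirror-image trajectories; the largest step size compatible with $\eps$-accuracy on this function is $\Theta(1/(T\eps))$, and the accumulated separation of these trajectories is then $\Omega(1/(T\eps^2))$ in squared norm. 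Adding a $\tfrac{\mu}{2}\|x\|^2$ regulariser yields the strongly-convex variant with bound $\Omega(1/T\wedge \eps)$. I expect the most delicate step to be verifying simultaneously the $\eps$-accuracy constraint and the claimed separation against the adaptive coefficient choice $\lambda_i^{(t)}$ allowed by \eqref{exp:foi}, likely via a resisting-oracle argument tailored to the initialization-only perturbation model.
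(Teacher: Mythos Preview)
Your upper bounds and the $\Omega(\delta^2)$ portions of the lower bounds are correct and essentially match the paper (the paper uses a nontrivial $f$ with a dummy coordinate rather than $f\equiv 0$, but your version works fine). There are two genuine gaps in the lower-bound plan.

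For the smooth strongly convex lower bound, neither ingredient you cite quite delivers. First, symmetric initializations $a\pm(\delta/2)v$ do \emph{not} force antipodal outputs for an adaptive \sfa: the coefficients $\lambda_i^{(t)}$ may depend on the observed gradients, which are sign-flipped between the two runs, so the algorithm can legally treat them asymmetrically. Second, the Chebyshev bound controls $\|p(M)\|_{\mathrm{op}}$, not $\|p(M)v\|$ for your fixed direction $v$; an adaptive polynomial can annihilate the spectral support of $v$. The paper sidesteps both issues by placing the reference initialization \emph{at the optimum} in one block of coordinates and the inexact initialization at a truncated optimizer $(q,q^2,\dots,q^L,0,0,\dots)$ in Nesterov's tridiagonal quadratic; the zero-chain property then forces any \sfa to leave coordinates beyond $L+T$ untouched, so $\|\vx_T-\vx_T^{\mathrm{ref}}\|^2\ge\sum_{i>L+T}q^{2i}\approx q^{2T}\delta^2$ directly, with no symmetry or operator-norm argument needed.

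For the non-smooth $\Omega(1/(T\eps^2))$ term, your step-size heuristic cannot yield a lower bound against \sfa, which has no fixed step size and may choose $\lambda_i^{(t)}$ arbitrarily; the sentence ``accumulated separation is then $\Omega(1/(T\eps^2))$'' is really the upper-bound recursion read in reverse. The paper's mechanism is structurally different. It takes
\[
f(\errx,\vy,w,\dum)=\max\bigl\{0,\ \errx[1]+\vy[1],\ \dots,\ \errx[T]+\vy[T]\bigr\}+2\eps\max\{w+1,0\}
\]
on $\R^{2T+2}$, with the inexact initialization placing $\delta/\sqrt{2T}>0$ into every $\errx[i]$ (and $\delta/\sqrt 2$ into the dummy $\dum$). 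Along the reference run from $0$ the outer max always selects its first branch, so $\vy^{\mathrm{ref}}_t\equiv 0$; along the inexact run the positivity of $\errx$ forces the max onto some index $i_t$ at every step, producing a unit subgradient in $\vy[i_t]$. Since along that trajectory the $\vy$-subgradient and the $w$-subgradient are always in the ratio $1:2\eps$, one obtains the exact identity $w_T=2\eps\sum_i\vy_T[i]$. The $\eps$-accuracy constraint forces $w_T\le-1/2$, and Cauchy--Schwarz over the $T$ coordinates gives $\sum_i\vy_T[i]^2\ge\tfrac1T\bigl(\sum_i\vy_T[i]\bigr)^2\gtrsim 1/(T\eps^2)$. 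The idea missing from your sketch is this deterministic \emph{coupling} between the accuracy coordinate $w$ and the deviation coordinates $\vy$, which converts the accuracy requirement directly into a separation bound without ever reasoning about step sizes. The strongly convex non-smooth variant adds $\tfrac{\mu}{2}\|\cdot\|^2$ and reruns the same coupling, as you anticipated.
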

Like \autoref{thm:sto}, this theorem is proved in several pieces; see \autoref{sec:summary-app}. A remark follows:


 {\bf Nonsmoothness:} As in the case of non-smooth optimization with inexact gradient oracles, here as well we see that even the slightest amount of inexactness in initialization can lead to non-negligible irreproducibility. Intuitively this is a consequence of non-smoothness: If the function is not differentiable at the reference point, even a slight bit of inexactness can lead to drastically different trajectories right from the beginning.

\section{Reproducibility in Optimization for Machine Learning}
So far, we have studied reproducibility in general convex optimization with different sources of perturbation. In machine learning, however, optimization problems come with more structure, and hence a more nuanced analysis of reproducibility is called for. In this section, we study reproducibility in two specific optimization settings of interest in machine learning: finite sum minimization which corresponds to minimizing training loss, and stochastic convex optimization (SCO), which corresponds to minimizing population (or test) loss. Instead of a detailed study of different types of convex functions as done in the previous sections, here we focus on a few specific cases that yield particularly interesting insights.

\subsection{Optimizing Training Loss (Finite Sum Minimization)}
Optimizing the training loss in ML can be cast as minimizing a function that can be written as a finite sum of component functions: $
    f(\vx) := \frac{1}{m}\sum_{i=1}^m f_i(\vx)\,.$ 
Typical optimization methods such as stochastic gradient descent can be implemented to solve such problems by iteratively sampling one of the component functions and taking its gradient. So randomness in sampling as well as non-deterministic numerical errors in computing of gradient can lead to irreproducibility. Randomness in sampling is a property of the algorithm and hence under the control of the algorithm designer; however non-deterministic numerical errors in gradient computations are beyond the control of the designer, and hence we aim to quantify irreproducibility caused by this specific source (numerical errors). To capture this intuition, we consider the following inexact gradient oracle. 
 
\begin{definition}[$\delta$-bounded inexact component gradient oracle]
\label{def:det_finite}
A $\delta$-bounded inexact {\bf component} gradient oracle takes as input $i \in \{1, 2, \ldots, m\}$ and a point $x$, and outputs a vector $g_i(\vx)$ such that
\begin{align*}
    \norm{g_i(\vx) -\nabla f_i(\vx)}^2 \leq \delta^2 \text{ for some } \nabla f_i(\vx) \in \partial f_i(\vx).
\end{align*} 
\end{definition}
The number of calls to the component gradient oracle is now our primary measure of complexity. Then, a natural question that arises is whether we can we tightly characterize reproduciblity of typical first-order methods when given access to an inexact component gradient oracle. We first observe that in the special case that all $f_i$ are identical, the inexact component gradient oracle reduces to the non-stochastic inexact oracle (\autoref{def:inexactgrad}). Hence, for the smooth and non-smooth function settings, we immediately obtain lower bounds on the $(\eps, \delta)$-deviation via our previous analysis; see  \autoref{thm:lb_det_smooth} and \autoref{thm:lb_det_nonsmooth}, respectively. 

The non-smooth case in particular is interesting, as the deviation lower bound is $\Omega(\nicefrac{1}{(T\eps^2)} +\nicefrac{\delta^2}{\eps^2})$, where $T$ is the number of oracle calls. Now, if we use projected (full-batch) gradient descent (as used for the matching upper bound in   \autoref{thm:ub_det_nonsmooth}), then each iteration would require $m$ oracle calls, hence the $(\eps, \delta)$-deviation for such a method is $O(\nicefrac{m}{(T\eps^2)} +\nicefrac{\delta^2}{\eps^2})$, which is worse than the lower bound mentioned above.  Now, the key question is whether we can reduce this deviation by other means. It turns out that {\em stochastic} gradient descent (SGD), where in each step we sample a component function to query randomly, when run with an appropriate learning rate and using averaging, 
matches the optimal deviation up to constant factors. We provide a detailed proof of the result in \autoref{sec:ub_finite}. This provides another justification for using SGD instead of gradient descent in practice.

 \begin{restatable}{theorem}{ubfinitenonsmooth}
 \label{thm:ub_finite_nonsmooth}
For $G=O(1)$ and $D=O(1)$, let $f_i$ be an $G$-Lipschitz convex cost function for each $i\in[m]$, and assume that the optimum of $f$ lies in a ball of radius $D$. 
Let $\eps, \delta > 0$ be given parameters such that $\delta \leq \nicefrac{\eps}{(2D)}$, and $T=\Omega(1/\eps^2)$ be a given number of iterations. Define the SGD updates as follows: initialize $\vx_0 = 0$, and for $t = 0, 1, \ldots, T-1$, set $\vx_{t+1} = \vx_t - \eta_t  g_{i_t}(\vx_t)$ where $i_t\sim [n]$ uniformly at random. Under the inexact component gradient oracle (\autoref{def:det_finite}),  the average iterate $\bar{\vx}_T$ of SGD with  stepsize $\eta=\Theta(\nicefrac{1}{(\eps T)})$ satisfies
$\E f(\bar{\vx}_T) - \inf_{\vx \in \text{dom} f} f(\vx) \leq \eps$ and $\E \norm{\bar{\vx}_T -\bar{\vx}'_T}^2 =O(\nicefrac{1}{(T\eps^2)} +\nicefrac{\delta^2}{\eps^2})$, where $\bar{\vx}'_T$ is the output of an independent run of SGD.
    \end{restatable}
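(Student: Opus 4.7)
The plan is to handle the two claims in tandem by coupling two independent SGD trajectories and tracking squared distances. Given the history up to step $t$, the sampled oracle output $g_{i_t}(\vx_t)$ is conditionally unbiased for $\bar g(\vx_t) := \tfrac{1}{m}\sum_{i=1}^m g_i(\vx_t)$, which in turn satisfies $\|\bar g(\vx) - \nabla f(\vx)\| \leq \delta$ for some subgradient of $f$, while $\|g_{i_t}(\vx_t)\|^2 \leq 2(G^2+\delta^2)$ by $G$-Lipschitzness of the components. For the $\eps$-accuracy claim I would expand $\|\vx_{t+1} - \vx^\star\|^2$, take conditional expectation, and use convexity to obtain
\begin{align*}
\E\|\vx_{t+1} - \vx^\star\|^2 \leq \E\|\vx_t - \vx^\star\|^2 - 2\eta\, \E[f(\vx_t) - f^\star] + 2\eta\delta\, \E\|\vx_t - \vx^\star\| + 2\eta^2(G^2+\delta^2).
\end{align*}
A short induction using Young's inequality $2\eta\delta \|\vx_t-\vx^\star\| \leq \|\vx_t-\vx^\star\|^2/T + T\eta^2\delta^2$, the assumption $\delta \leq \eps/(2D)$, and $\eta T = O(1/\eps)$, shows $\E\|\vx_t - \vx^\star\|^2 = O(1)$ uniformly in $t \leq T$. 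Telescoping the one-step inequality and applying Jensen's inequality to the average then gives $\E f(\bar{\vx}_T) - f^\star \leq D^2/(2\eta T) + \eta(G^2+\delta^2) + O(\delta) = O(\eps)$.

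For the deviation, I would couple two independent runs, using independent sampling $i_t, i_t'$ and independent (possibly adversarial) oracle realizations $g, g'$, and set $a_t := \E\|\vx_t - \vx_t'\|^2$ with $a_0 = 0$. Expanding $\|\vx_{t+1} - \vx_{t+1}'\|^2$ and conditioning on $(i_t, i_t')$, the cross term $-2\eta\langle \bar g(\vx_t) - \bar g'(\vx_t'), \vx_t - \vx_t'\rangle$ splits into the monotonicity piece $-2\eta\langle \nabla f(\vx_t) - \nabla f(\vx_t'), \vx_t - \vx_t'\rangle \leq 0$ (by convexity of $f$) plus two bias terms each bounded by $2\eta\delta \|\vx_t - \vx_t'\|$, while the quadratic remainder is $O(\eta^2(G^2+\delta^2))$. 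Taking total expectations and applying Cauchy--Schwarz $\E\|\vx_t - \vx_t'\| \leq \sqrt{a_t}$ produces
\begin{align*}
a_{t+1} \leq a_t + 4\eta\delta\,\sqrt{a_t} + O\!\bigl(\eta^2 (G^2+\delta^2)\bigr).
\end{align*}

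The nonlinear $\sqrt{a_t}$ term is the main obstacle: dominating it crudely would cost an exponential Gronwall factor. The trick is to apply Young's inequality at the ``correct'' scale $1/T$, namely $4\eta\delta \sqrt{a_t} \leq a_t/T + 4T\eta^2\delta^2$, which linearises the recursion to $a_{t+1} \leq (1 + 1/T)\,a_t + O(T\eta^2\delta^2 + \eta^2 G^2)$. Unrolling over $T$ steps and using $(1+1/T)^T \leq e$ yields $a_T = O(T^2\eta^2\delta^2 + T\eta^2 G^2)$; substituting $\eta = \Theta(1/(\eps T))$ gives exactly $a_T = O\!\bigl(\delta^2/\eps^2 + 1/(T\eps^2)\bigr)$. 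Finally, by convexity of $\|\cdot\|^2$, $\|\bar \vx_T - \bar \vx_T'\|^2 \leq \tfrac{1}{T}\sum_{t<T}\|\vx_t - \vx_t'\|^2$, so the same bound transfers to the averaged iterate, completing the proof. Tuning the Young's inequality parameter to $1/T$ is the critical step: larger values would blow up through the Gronwall factor, while smaller ones would inflate the $\delta^2$ contribution beyond the target $\delta^2/\eps^2$.
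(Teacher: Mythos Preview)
Your argument is correct and reaches the same bound, but the route differs from the paper's in two places worth noting.

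For the deviation, the paper does not couple the two SGD runs directly; instead it compares each run to a \emph{deterministic reference} trajectory $\vy_t$ obtained by running full-batch GD on $f$ with exact gradients, and then combines via the triangle inequality. This yields the same one-step inequality $d_{t+1}^2 \le d_t^2 + 2\eta\delta\, d_t + O(\eta^2 G^2)$ with $d_t = \sqrt{\E\|\vx_t-\vy_t\|^2}$. To close this nonlinear recursion the paper uses the induction device of \cite{bassily2020stability} (their Lemma on $d_T^2 \le 2\delta\sum_t\eta_t d_t + C\sum_t\eta_t^2$): if $d_T$ exceeds all previous $d_t$ one replaces every $d_t$ by $d_T$, solves the resulting quadratic, and obtains $d_T \le \sqrt{C\sum\eta_t^2} + 2\delta\sum\eta_t$. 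Squaring and substituting $\eta=\Theta(1/(\eps T))$ gives the target. Your linearisation via Young's inequality at scale $1/T$ followed by a discrete Gronwall step is an equally clean alternative; it is arguably more systematic (the choice of parameter is transparent from the desired final bound), whereas the paper's max-trick is slightly sharper in constants and avoids the factor $e$ from $(1+1/T)^T$.

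A secondary difference: the paper's proof silently runs \emph{projected} SGD onto the ball of radius $D$, which immediately controls $\|\vx_t-\vx_*\|$ and lets the $\delta D$ bias term appear directly in the accuracy bound. You instead establish $\E\|\vx_t-\vx_*\|^2 = O(1)$ inductively without projection, reusing the same Young's-at-scale-$1/T$ idea. This is a nice touch that makes your argument match the theorem statement (which has no projection) more literally than the paper's own proof.
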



\subsection{Optimizing Population Loss (Stochastic Convex Optimization)}
The fundamental problem of machine learning is to find the solution to the following {\em population} (or test) loss minimization problem: minimize $F(\vx) = \E_{\xi\sim \Xi} f(\vx,\xi)$, where $\Xi$ is an unknown distribution on the examples $\xi$, given access to an oracle that sample from $\Xi$. When the function $F$ is convex, this is also known as Stochastic Convex Optimization (SCO).

In this setting, the sampling oracle is a natural source of irreproducibility of optimization methods. To model this, we consider the \emph{stochastic global oracle}, inspired by  \citet{foster2019complexity}.
\begin{definition}[$\delta$-bounded stochastic global oracle] \label{def:global}
Given a function $F$, a $\delta$-bounded stochastic global oracle for $F$ is an algorithm that, on each query, draws an independent sample $\xi \sim \Xi$ from some distribution $\Xi$, and outputs a function $f(\cdot, \xi): \text{dom} F \rightarrow \mathbb{R}$ such that for all $x \in \text{dom} F$, we have $F(\vx) = \E_{\xi\sim \Xi} f(\vx,\xi)$, and, $ \E_{\xi\sim \Xi} \norm{\nabla f(\vx,\xi)-\nabla F(\vx)}^2 \leq \delta^2\,.$
\end{definition}
Since in each query, the stochastic global oracle returns the complete specification of $f(\cdot,\xi)$, an algorithm using such an oracle has access to $(\vx, f(\vx,\xi), \nabla f(\vx,\xi), \nabla^2 f(\vx,\xi), \cdots) $ for all $\vx$. 

Note that the inexact stochastic gradient oracle of \autoref{def:inexactgrad} is a weaker form of the stochastic global oracle. Thus, the key question is: {\em  under the powerful stochastic global oracle, can we provide better reproducibility than the lower bounds in \autoref{thm:sto}?}  
Surprisingly, the answer to the above question is negative. That is, despite the stronger oracle setting, there is a function class for which the lower bound on $(\eps,\delta)$-deviation matches that of  \autoref{thm:sto}. 

\begin{restatable}{theorem}{lbonline}  \label{thm:lb_online} Assume that $\eps<1/200$ and $\eps \lesssim\delta^2$.
Then there exists a family of population costs $\Fth{\theta}(\vx) = \E_{\xi\sim P_\theta} f_{\theta}(\vx,\xi)$ parametrized by $\theta\in[1,2]$, and a $\delta$-bounded stochastic global oracle for $\Fth{\theta}$, satisfying the following property. Suppose that $A$ is any algorithm that for each $\theta\in[1,2]$ uses at most $T$ queries to the stochastic global oracle  and  outputs $\vx_T^{\theta}$ that  is   $\eps$-accurate, i.e., $\E \Fth{\theta}(\vx_T^{\theta}) - \inf_{\vx} \Fth{\theta}(\vx)\leq \eps$.
 Then, there exists $\thebad\in[1,2]$ such that  $\var(\vx_T^{\thebad}) \geq \Omega(\nicefrac{\delta^2}{(T\eps^2)})$.
 \end{restatable}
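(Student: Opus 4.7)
The plan is to exhibit an explicit one-parameter family of Gaussian quadratic losses and then convert the $\eps$-accuracy requirement into a variance lower bound via the Cram\'er--Rao inequality.

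\textbf{Construction.} Fix a sufficiently large absolute constant $C$ (for concreteness, $C=100$). For each $\theta\in[1,2]$ take $P_\theta = \Normal(\theta,\sigma^2)$ with $\sigma^2 = \delta^2/(C\eps)^2$, and set
\begin{align*}
f_\theta(\vx,\xi) \,=\, \tfrac{C\eps}{2}(\vx-\xi)^2, \qquad F_\theta(\vx) \,=\, \E_{\xi\sim P_\theta} f_\theta(\vx,\xi) \,=\, \tfrac{C\eps}{2}(\vx-\theta)^2 \;+\; \tfrac{C\eps}{2}\sigma^2.
\end{align*}
A direct computation gives $\E_\xi\|\nabla f_\theta(\vx,\xi)-\nabla F_\theta(\vx)\|^2 = (C\eps)^2\sigma^2 = \delta^2$, so the oracle is $\delta$-bounded. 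Since the minimizer of the quadratic $f_\theta(\cdot,\xi)$ is exactly $\xi$, observing the whole function is information-theoretically equivalent to observing $\xi$; hence $T$ oracle calls deliver $T$ i.i.d.\ samples from $\Normal(\theta,\sigma^2)$, with total Fisher information $I_T(\theta) = T/\sigma^2 = T(C\eps)^2/\delta^2$ about $\theta$.

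\textbf{From $\eps$-accuracy to a $\theta$-estimator.} The minimum of $F_\theta$ is attained at $\theta$, so $\eps$-accuracy reads $\E(\vx_T^\theta-\theta)^2 \leq 2/C$. Writing $m(\theta) := \E[\vx_T^\theta]$, Jensen gives $|m(\theta)-\theta|\leq \sqrt{2/C}$ for every $\theta\in[1,2]$, hence $m(2)-m(1)\geq 1-2\sqrt{2/C}$, which is a positive absolute constant once $C$ is large.

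\textbf{Cram\'er--Rao and averaging.} Regarding $\vx_T^\theta$ as a (biased) scalar estimator of $\theta$ in the Gaussian location model, the Cram\'er--Rao inequality yields
\begin{align*}
\var(\vx_T^\theta) \,\geq\, \frac{(m'(\theta))^2}{I_T(\theta)} \,=\, \frac{\delta^2}{T C^2\eps^2}\,(m'(\theta))^2.
\end{align*}
Integrating over $\theta\in[1,2]$ and applying Cauchy--Schwarz,
\begin{align*}
\int_1^2 \var(\vx_T^\theta)\,d\theta \,\geq\, \frac{\delta^2}{TC^2\eps^2}\int_1^2 (m'(\theta))^2\,d\theta \,\geq\, \frac{\delta^2}{TC^2\eps^2}\,(m(2)-m(1))^2 \,=\, \Omega\!\left(\nicefrac{\delta^2}{(T\eps^2)}\right),
\end{align*}
so some $\thebad\in[1,2]$ must satisfy $\var(\vx_T^{\thebad}) = \Omega(\delta^2/(T\eps^2))$, which is the claim.

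\textbf{Main obstacle.} The delicate step is the Cram\'er--Rao inequality itself: the map $\theta\mapsto m(\theta)$ is produced by a possibly randomised and adaptive algorithm, so one needs enough regularity (differentiability of $m$, interchange of $\partial_\theta$ with $\E$) for the information bound to apply. Since the observation model is the smooth Gaussian location family, standard dominated-convergence arguments should cover this; alternatively, one may replace Cram\'er--Rao by the van Trees inequality applied to a smooth prior supported on $[1,2]$, which sidesteps the regularity issue entirely and only alters the $\Omega(\cdot)$ constant.
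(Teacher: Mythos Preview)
Your proof is correct and shares the paper's information-theoretic template---reduce to a one-dimensional location problem and apply the biased Cram\'er--Rao bound---but differs in two ways worth noting. First, your construction is simpler: you use a pure Gaussian location model $\xi\sim\Normal(\theta,\sigma^2)$ with $f_\theta(\vx,\xi)=\tfrac{C\eps}{2}(\vx-\xi)^2$, whereas the paper recycles the Bernoulli--Gaussian mixture from its earlier stochastic-gradient lower bound (the oracle returns the zero function with probability $1-200\eps$, otherwise a Gaussian-shifted quadratic). That mixture is there only so the Fisher-information calculation can be reused verbatim; your direct version has the bonus of not actually needing the hypothesis $\eps\lesssim\delta^2$, which the paper uses to control the Bernoulli contribution to the oracle variance. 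Second, to locate $\thebad$, the paper argues pointwise: since the bias $b(\theta)=m(\theta)-\theta$ satisfies $|b|\leq 0.1$ everywhere, integrating forces $b'(\thebad)\geq -\tfrac12$ at some point, hence $(m'(\thebad))^2\geq\tfrac14$. You instead integrate the Cram\'er--Rao inequality over $[1,2]$ and apply Cauchy--Schwarz to $\int(m')^2\geq(m(2)-m(1))^2$. The two routes are equivalent in strength, and yours dovetails more naturally with the van~Trees alternative you flag for handling the regularity of $m$.
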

 \noindent See \autoref{pf:lb_online} for a detailed proof. On the other hand,  \autoref{thm:sto} also shows that standard SGD with the weaker inexact stochastic gradient oracle achieves this lower bound.

 \section{Conclusions}
 \label{sec:discussion}
We presented a framework for studying reproducibility in convex optimization, and explored limits of reproducibility for optimizing various function classes under different sources of errors. 
For each of these settings, we provide tight lower and upper bounds on reproducibility of first order iterative algorithms. 
Overall, our results provide the following insights: a) Non-smooth functions can be highly susceptible to even tiny errors which can creep in due to say numerical errors in GPU. Thus introducing smoothness in deep learning models might help with reproducibility. b) Generally, gradient descent type methods with small learning rate are more reproducible. c) In finite-sum settings, despite more randomness, SGD is more reproducible than full-batch gradient descent. 
 
The study in this paper is a first step towards addressing the challenging problem of reproducibility in a rigorous framework. So, many important directions remain unexplored. For example, study of reproducibility of adaptive methods like AdaGrad that are not captured by \eqref{exp:foi} is interesting. We study reproducibility in a strict form where we measure deviation in terms of the learned parameters of the model. However, in practice, one might care for reproducibility in model predictions only. 
So, extending our results to such ML-driven scenarios should be relevant in practice. Finally, while our lower bounds on reproducibility obviously also hold for non-convex optimization, extension of the upper bounds to non-convex settings and potentially, designing novel rigorous methods in this setting is a fascinating direction for future work.

\begin{ack} 
Kwangjun Ahn was supported by graduate assistantship from the NSF Grant (CAREER: 1846088), the ONR grant (N00014-20-1-2394) and MIT-IBM Watson as well as a Vannevar Bush fellowship from Office of the Secretary of Defense.
Part of this work was done while Kwangjun Ahn was visiting the Simons Institute for the Theory of Computing, Berkeley, California.
\end{ack}

\bibliographystyle{abbrvnat} 
\bibliography{ref}

\newpage
\appendix


\addcontentsline{toc}{section}{Appendix} 
\part{Appendix} 
\parttoc

\section{Summary of results in Appendix}
 
\label{sec:summary-app}
In order to help readers navigates the results in the appendix, we summarize the results in the appendix in the following table. 
In the main paper, we have three main theorems: Theorems \ref{thm:sto}, \ref{thm:nonsto} and \ref{thm:init}, each corresponding to one column in the following table. Further each of these theorems have four components corresponding to four settings: smooth, smooth \& strongly convex, nonsmooth and finally nonsmooth \& strongly convex. Each cell in the below table lists the corresponding theorems which prove the lower and upper bounds corresponding to a given setting. Theorems \ref{thm:sto}, \ref{thm:nonsto} and \ref{thm:init} follow immediately from the constituent theorems.
In each cell of results, ``Theorems {\bf X}\sep{} {\bf Y}'' indicates that the lower bound appears in Theorem {\bf X}, and the upper bound in Theorem {\bf Y}. All lower bounds are for first-order iterative algorithms (\emph{\`{a} la} \cite{nesterov2018lectures}) that we formally defined in \eqref{exp:foi}. 
\begin{table}[h]  
\centering
\begin{tabular}{ |l |c|c|c|  }
\hline \multirow{3}{*}{}  & Stochastic Inexact & Non-stochastic Inexact & Inexact Initialization \\ 
& Gradient Oracle & Gradient Oracle & Oracle \\ 
& (\autoref{thm:sto}) &  (\autoref{thm:nonsto}) &  (\autoref{thm:init}) \\
\hline\hline  
\multirow{2}{*}{Smooth}   & $\Theta(\nicefrac{\delta^2}{T\eps^2})$ & $\Theta(\nicefrac{\delta^2}{\eps^2})$ & $\Theta(\delta^2)$  \\
 & Theorems~\ref{thm:lb_sto_info}$^{\dag}$\&\ref{thm:lb_sto_first}\sep{}  \ref{thm:ub_sto_smooth}  & Theorems~\ref{thm:lb_det_smooth}\sep{}  \ref{thm:ub_det_smooth} & Theorems~\ref{thm:lb_init_smooth}\sep{}  \ref{thm:ub_init_smooth}\\
   \hline \hline
 Smooth  & $\Theta(\nicefrac{\delta^2}{T}\wedge \eps)$ & $\Theta(  \delta^2  \wedge  \eps  )$   &   $\Theta(e^{-\Omega(T)}\delta^2\wedge \eps)$ \\
  Strongly-Convex & Theorems~\ref{thm:lb_sto_smooth_str}\sep{} \ref{thm:ub_sto_smooth_str}  & Theorems~\ref{thm:lb_det_smooth_str}\sep{} \ref{thm:ub_det_smooth_str} & Theorems~\ref{thm:lb_init_smooth_str}\sep{} \ref{thm:ub_init_smooth_str}\\ 
  \hline \hline
 \multirow{2}{*}{Nonsmooth}   & $\Theta(\nicefrac{1}{T\eps^2})$ & $\Theta( \nicefrac{1}{T\eps^2}  + \nicefrac{\delta^2}{\eps^2}  )$ &  $\Theta( \nicefrac{1}{T\eps^2} + \delta^2)$  \\
  & Theorems~\ref{thm:lb_sto_nonsmooth}\sep{} \ref{thm:ub_sto_nonsmooth}  & Theorems~\ref{thm:lb_det_nonsmooth}\sep{} \ref{thm:ub_det_nonsmooth} &  Theorems~\ref{thm:lb_init_nonsmooth}\sep{} \ref{thm:ub_init_nonsmooth}  \\ 
    \hline \hline
  Nonsmooth  & $\Theta(\nicefrac{1}{T} \wedge {\eps})$ & $\Theta( (\nicefrac{1}{T}  + {\delta^2})  \wedge {\eps} )$   &   $\Theta(\nicefrac{1}{T} \wedge {\eps})$ \\
  Strongly-Convex & Theorems~\ref{thm:lb_sto_nonsmooth_str}\sep{} \ref{thm:ub_sto_nonsmooth_str}  & Theorems~\ref{thm:lb_det_nonsmooth_str}\sep{} \ref{thm:ub_det_nonsmooth_str} & Theorems~\ref{thm:lb_init_nonsmooth_str}\sep{} \ref{thm:ub_init_nonsmooth_str}\\
  \hline  
\end{tabular}
\label{tab:app} 
\\[1em]
\noindent $\dag$ For smooth costs and stochastic inexact gradient oracle, we also have an additional {\bf information-theoretic} lower bound of $\Omega(\nicefrac{\delta^2}{T\eps^2})$ when $\eps\lesssim \delta^2$ in \autoref{thm:lb_sto_info}.
\end{table}
\subsection{General guidance for navigating Appendix}
\begin{mdframed}[backgroundcolor=emph]
\begin{list}{{\tiny $\blacksquare$}}{\leftmargin=1.5em}
\setlength{\itemsep}{-1pt}
    \item  The {\bf information-theoretic lower bounds} are presented in \autoref{sec:info_theory} and can be read independently:
     \begin{list}{{\tiny $\bullet$}}{\leftmargin=1.5em}
		\setlength{\itemsep}{-1pt}
     \item The lower bound proof for the smooth costs (\autoref{thm:lb_sto_info})  and is presented in \autoref{pf:lb_sto_info}. 
     \item The lower bound proof for the stochastic global oracle case (\autoref{thm:lb_online}) is similar to that of \autoref{thm:lb_sto_info} and is presented in \autoref{pf:lb_online}.
     \end{list}
    \item  The {\bf \sfa{} lower bounds} are quite technical and often rely on delicate constructions. Hence, we present the proofs as follows:
    \begin{list}{{\tiny $\bullet$}}{\leftmargin=1.5em}
		\setlength{\itemsep}{-1pt}
 
	    \item In \autoref{sec:foi_smooth}, to help readers understand the general proof strategy, we first present the lower bound proof for smooth costs with the stochastic inexact oracle (\autoref{thm:lb_sto_first}). 
	    
	    \item In \autoref{sec:lb_smooth},  we present the proofs of other \sfa{} lower bounds for smooth costs.

	    \item In \autoref{sec:lb_nonsmooth}, we present the proofs of \sfa{} lower bounds for nonsmooth costs. The lower bound constructions are more complicated than the case of smooth costs.
	    Hence, to help reader understand the crux of arguments, we first present the proof of a (weaker) lower bound against gradient descent (as opposed to the entire class of \sfa{}) in \autoref{sec:warmup_gd}.
	
    \end{list}
    \item The {\bf upper bounds} are presented in \autoref{sec:ub_smooth} (smooth costs), \autoref{sec:ub_nonsmooth} (non-smooth costs), and \autoref{sec:ub_finite} (finite-sum setting).
\end{list}
\end{mdframed}

\section{Information-theoretic lower bounds}
\label{sec:info_theory}
\subsection{ Information-theoretic lower bound for stochastic inexact gradient model}\label{pf:lb_sto_info} 
 
 We state  and prove the information-theoretic lower bound for smooth costs.

\begin{restatable}{theorem}{lbstoinfo} {\bf (Information-theoretic Lower Bound)} \label{thm:lb_sto_info} Assume that $\eps<1/200$ and  $\eps \lesssim\delta^2$. Then there exists a family of smooth cost functions $\{\Fth{\theta}:\R \to \R\}$ parameterized by $\theta\in[-1,1]$ with the following property. Suppose  $A$ is any algorithm that for each $\theta\in[-1,1]$ uses at most $T$ queries to stochastic inexact gradient oracle and  outputs $\vx_T^{\theta}$ that  is   $\eps$-accurate, i.e., $\E \Fth{\theta}(\vx_T^{\theta}) - \inf_\vx \Fth{\theta}(\vx)\leq \eps$.
 Then, there exists $\thebad\in[-1,1]$ such that  $(\eps, \delta)$-deviation (\autoref{def:deviation} (a)) is lower bounded by $\Omega(\frac{\delta^2}{T\eps^2})$.
 \end{restatable}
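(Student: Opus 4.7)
The plan is to reduce the reproducibility question to a parameter estimation problem on a parameterized family of smooth quadratics and apply the Hammersley--Chapman--Robbins (HCR) variance inequality. Fix a constant $C>0$ (say $C=4$) and define $\Fth{\theta}(x):=\tfrac{\eps}{2}(x-C\theta)^2$ for $\theta\in[-1,1]$; each $\Fth{\theta}$ is $\eps$-smooth (hence $O(1)$-smooth) with minimizer $C\theta$ and optimal value $0$. Take the stochastic oracle to be $g(x)=\eps(x-C\theta)+\xi$ with $\xi\sim\Nc(0,\delta^2)$ drawn independently per query, which is a legitimate $\delta$-bounded stochastic inexact gradient oracle. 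The $\eps$-accuracy hypothesis then reduces to $\E[(\vx_T^{\theta}-C\theta)^2]\le 2$, which in turn implies that $\mu(\theta):=\E[\vx_T^{\theta}]$ satisfies $|\mu(\theta)-C\theta|\le\sqrt{2}$ uniformly in $\theta$; hence for $\theta,\theta'\in[-1,1]$ with $\Delta:=|\theta-\theta'|\ge 4\sqrt{2}/C$, we get $|\mu(\theta')-\mu(\theta)|\ge C\Delta/2$.

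Next I upper bound how much information the algorithm's transcript carries about $\theta$. Writing $P_\theta^T$ for the joint law of the full (queries$+$responses$+$output) transcript under $\Fth{\theta}$, the chain rule of $\chi^2$-divergence---applicable to adaptive algorithms via tensorization over queries---combined with the Gaussian formula yields
\[
\chi^2\bigl(P_{\theta'}^T\,\big\|\,P_\theta^T\bigr)\;\le\;\bigl(1+\tfrac{C^2\eps^2\Delta^2}{\delta^2}\bigr)^{T}-1.
\]
Since $\vx_T^{\theta}$ is a statistic of the transcript, HCR gives $\var(\vx_T^{\theta})\ge (\mu(\theta')-\mu(\theta))^2/\chi^2(P_{\theta'}^T\,\|\,P_\theta^T)$. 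Selecting $\Delta^2=\Theta(\delta^2/(TC^2\eps^2))$ makes the denominator $\Theta(1)$ while the numerator is $\Theta(\delta^2/(T\eps^2))$, so $\var(\vx_T^{\theta})=\Omega(\delta^2/(T\eps^2))$. Since two independent runs on $\Fth{\theta}$ satisfy $\E\|\vx_T^{\theta}-\vx_T^{\prime\,\theta}\|^2=2\var(\vx_T^{\theta})$, taking $\thebad$ to be either endpoint of the perturbation pair yields the claimed $(\eps,\delta)$-deviation lower bound of $\Omega(\delta^2/(T\eps^2))$.

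The main obstacle is reconciling the two competing constraints on $\Delta$: the $\eps$-accuracy-induced bias bound $|\mu(\theta)-C\theta|\le\sqrt{2}$ forces the separation condition $\Delta\gtrsim 1/C$ (otherwise the numerator of HCR is dominated by bias and could vanish), while the $\chi^2$ budget requires $\Delta\lesssim \delta/(\eps\sqrt{T})$ (otherwise the denominator blows up exponentially in $T$). These are simultaneously feasible exactly in the regime $T\eps^2\lesssim\delta^2$, which is precisely where the hypothesis $\eps\lesssim\delta^2$ keeps the lower bound nontrivial. A secondary technical point is correctly handling adaptivity by tensorizing $\chi^2$ (respectively KL) along the transcript via the chain rule, and verifying that the Gaussian-noise choice indeed realizes a $\delta$-bounded stochastic inexact oracle in the sense of \autoref{def:inexactgrad}(a).
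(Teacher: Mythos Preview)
Your HCR approach has a real gap in the range of $T$ it covers. The triangle-inequality step $|\mu(\theta')-\mu(\theta)|\ge C\Delta-2\sqrt{2}\ge C\Delta/2$ requires $\Delta\ge 4\sqrt{2}/C=\Omega(1)$ (you fixed $C=4$). Combined with your $\chi^2$-budget choice $\Delta=\Theta(\delta/(C\eps\sqrt{T}))$, this forces $T=O(\delta^2/\eps^2)$. But the theorem must hold for \emph{all} $T$, and the regime of interest---where the matching upper bound $O(\delta^2/(T\eps^2))$ shrinks with $T$ and the computation--reproducibility trade-off is visible---is precisely $T\gg\delta^2/\eps^2$. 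Your final paragraph conflates the stated hypothesis $\eps\lesssim\delta^2$ (a condition on $\eps,\delta$ only) with the derived restriction $T\eps^2\lesssim\delta^2$; the former does not imply the latter once $T\ge 1/\eps$.

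The fix is to replace the one-pair triangle-inequality control of the numerator by a pigeonhole over a fine grid: with $\theta_k=-1+k\Delta$, the bias bound gives $\mu(1)-\mu(-1)\ge 2C-2\sqrt{2}$, so some consecutive pair satisfies $\mu(\theta_{k+1})-\mu(\theta_k)\ge (C-\sqrt{2})\Delta$, and HCR at that pair yields $\Omega(\delta^2/(T\eps^2))$ for arbitrary $T$. This is exactly the discrete analogue of the paper's argument, which instead uses the Cram\'er--Rao bound for biased estimators together with a mean-value argument producing $\thebad$ with $b'(\thebad)\ge-\tfrac12$; the paper first reduces to i.i.d.\ observations by noting that a query at any $x$ can be simulated from a query at $0$ (your oracle also has this property, since $g(x)=g(0)+\eps x$). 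Two smaller points: the exact one-shot Gaussian $\chi^2$ is $e^{C^2\eps^2\Delta^2/\delta^2}-1$, so the $T$-query value is $e^{TC^2\eps^2\Delta^2/\delta^2}-1$ rather than $(1+C^2\eps^2\Delta^2/\delta^2)^T-1$ (your expression is in fact a lower bound on $\chi^2$, not an upper bound, though both are $\Theta(1)$ at your choice of $\Delta$); and the $\chi^2$ tensorization for adaptive transcripts holds here not by a generic ``chain rule'' but because the per-query likelihood ratio depends only on the i.i.d.\ Gaussian noise and not on the query point.
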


\begin{proof} 
Let   $\eps>0$ be a fixed small constant.
We consider the following   family of cost functions $\{\Fth{\theta}:\R \to \R\}$ parametrized by $\theta\in[-1,1]$:  
\begin{align} \label{cost:family_2}
    \Fth{\theta}(x) =  \begin{cases}
     100\eps \cdot (x-\theta)^2, & \text{for}~x\in[-1,1],\\
     100\eps \cdot (1-\theta)^2 + 200\eps (1-\theta)(x -1) & \text{for}~x>1\\
     100\eps \cdot (-1-\theta)^2 + 200\eps (-1-\theta)(x + 1) & \text{for}~x<-1.
    \end{cases}
\end{align} 
Note that $\Fth{\theta}$ is $\epss$-smooth for each $\theta\in[-1,1]$.

 We consider the following stochastic first order oracle for each $\Fth{\theta}$.
For a queried point $x$, the oracle outputs $g_{\theta}(x)$ defined as  
\begin{align} \label{sto_grad}
    g_{\theta}(x) &= \begin{cases} 
 \frac{1}{200\eps}\nabla \Fth{\theta}(x)  +z \quad \text{for }z\sim N(0,\frac{\delta^2}{2\cdot \epss}), &\text{w.p.}~\epss\,,\\
     0, &\text{w.p.}~1-\epss\,.
    \end{cases}
\end{align}
Since we define $\Fth{\theta}$ as a linear extension outside $[-1,1]$, below we may assume that all gradient queries are made within $[-1,1]$.

Assuming that all queries are made in $[-1,1]$, we can rewrite \eqref{sto_grad} simply as
\begin{align} \label{sto_grad:2}
   g_{\theta}(x) = (x-\theta +z) \cdot  \sam\,,
\end{align} 
where $\sam \sim \bern(\epss)$.
Let us verify that this is a valid stochastic first order oracle. First, it is clear that 
 \begin{align*}
     \E[g_{\theta}(x)]  = \epss\cdot  (x-\theta+0) +(1- \epss) \cdot 0 = \epss\cdot (x-\theta)= \nabla \Fth{\theta} (x).
 \end{align*}
 Next, for the variance, note that
 \begin{align*}
     \E \left[g_\theta(x) -\nabla \Fth{\theta}(x)\right]^2 &= \E\left[(x-\theta +z) \cdot  \sam -\nabla \Fth{\theta}(x)\right]^2\\ 
     &= \epss \cdot \E\left[ x-\theta +z -\epss \cdot (x-\theta) \right]^2 + (1-\epss)  \cdot  \left[  -\epss \cdot (x-\theta) \right]^2 \\
     &= \epss (1-\epss)^2\cdot (x-\theta)^2 +(\epss)^2(1-\epss)\cdot (x-\theta)^2 + \epss \cdot \E[z^2]\\
     &= \epss (1-\epss) \cdot (x-\theta)^2 + \delta^2/2.
 \end{align*}
 Hence, the variance is always upper bounded by $\delta^2/2+\epss(1-\epss) 2^2\leq \delta^2$, as long as $\eps\lesssim \delta^2$.

We now prove the theorem.
From the fact that the output  $x^{\theta}_T$  is $\eps$-accurate, we have
 \begin{align*} 
     \forall \theta\in[-1,1],\quad 100\eps \cdot \E\left[x^{\theta}_T-\theta\right]^2\leq  \eps.  
 \end{align*}
 Using Jensen's inequality, we know $\left|\E[x^{\theta}_T] - \theta\right|^2 = \left|\E[x^{\theta}_T-\theta]\right|^2 \leq \E[x^{\theta}_T-\theta]^2 $ which implies the following condition: 
  \begin{align}\label{condition:1}
    \forall \theta\in[-1,1],\quad \left|\E[x^{\theta}_T]-\theta\right| \leq 0.1.  
 \end{align}
 This condition says if we regard $A$ as an estimator of $\theta$ for each $\theta$, the bias  is less than equal to $0.1$. 
 For the following argument, we hence change our perspective and regard $A$ as an  estimator of $\theta$ based on $T$ inexact gradient queries rather than an optimization algorithm.
 
 Let us fix $\theta\in [-1,1]$. 
 We first  that we may assume that all  gradient queries are made at the point $0$.
 Indeed, from the expression for the stochastic oracle \eqref{sto_grad:2}, we know
 \begin{align*}
     g_\theta(x) \overset{d}{=} g_\theta(0) + x\cdot \sam.
 \end{align*}
 In particular, this implies that one can reconstruct a gradient query at any point $x$ based on a gradient query at the point $0$.
 Hence, without loss of generality, we may assume that all gradient queries are made at $x=0$.
 
 Hence, we can regard $A$ as an estimator of $\theta$   based on  $T$ independent  measurements  $y_1,\dots,y_T$ of form
 \begin{align} 
 y_i = (-\theta+z_i)\cdot \sam_i,\quad i=1,\dots, T\,,
 \end{align}
 where $z_i\sim N(0,\frac{\delta^2}{\epss})$ and $\sam_i \sim \bern(\epss)$.
 Now with this new perspective in mind, we can lower bound the variance of the estimator $\var(\vx_T^\theta)$ using the Cramer-Rao lower bound.
 To that end, we first calculate the fisher information of the measurement distribution.

 Recall that each measurement is of form
\begin{align*}
    y=(-\theta +z) \cdot  \sam\,,
\end{align*}
where $z\sim N(0,\frac{\delta^2}{2\cdot \epss})$ and $\sam\sim \bern(\epss)$. Then, the log likelihood is 
 \begin{align*}
     \ell(\theta; y) &=  \ln\left[\frac{1 }{\sqrt{2\pi\cdot \frac{\delta^2}{2\cdot \epss}}} \exp\left(-\frac{2\cdot \epss}{\delta^2}\cdot \frac{(y+\theta)^2}{2}\right)\right] \ind{\sam=1}   +  \delta_{[y=0]} \cdot  \ind{\sam=0}\,.
 \end{align*}
 Taking derivatives of the log likelihood, we get
 \begin{align*}
     \nabla_\theta \ell(\theta; y)  &= - \frac{2\cdot \epss}{\delta^2}\cdot (\theta+y)\cdot \ind{\sam=1} \,,\\
     \nabla^2_\theta \ell(\theta ; y)  &= -  \frac{2\cdot \epss}{\delta^2}\cdot \ind{\sam=1}\,.
 \end{align*}
 Hence,  the Fisher information is equal to 
 \begin{align*}
     I(\theta) =\cov_\theta \nabla_\theta \ell(\theta ; y) &= -\E_{\theta} \nabla^2_\theta \ell(\theta; y) =\frac{2\cdot \epss}{\delta^2}\cdot
    \Pr[\sam =1]= \frac{2\cdot (\epss)^2}{\delta^2}\,.
 \end{align*}
 Hence, the fisher information $I_T(\theta)$ for the $T$ independent measurements is equal to $T   \frac{2\cdot (\epss)^2}{\delta^2}\cdot$.

  Let us recall the Cram\'er-Rao lower bound for biased estimators. (see, e.g., \cite[(3)]{eldar2004minimum}).
   \begin{proposition} \label{prop:cramerrao}
   Let $b(\theta):= \E[\hat{\theta}]-\theta$ be the bias of an estimator $\hat{\theta}$.
   Then, the following bounds hold:
   \begin{align*}
       \var(\hat{\theta}) \geq \frac{[1+b'(\theta)]^2}{I(\theta)}.
   \end{align*}
   \end{proposition}

 From \eqref{condition:1}, it must be that $b'(\thebad) \geq -\frac{1}{2}$ for some $\thebad\in[-1,1]$.
To see this, suppose to the contrary that $b'(\theta) < -\frac{1}{2}$ for all $\theta\in[-1,1]$.
Then, it must be that
\begin{align*}
    b(1) \leq -\frac{1}{2}\cdot 2 + b(-1) \overset{\eqref{condition:1}}{\leq } -1 +0.1 = -0.9,
\end{align*}
which is a contradiction since \eqref{condition:1} ensures that $ b(1)\geq -0.1$.
Thus, Proposition~\ref{prop:cramerrao} gives
\begin{align*}
    \var(\vx_T^{\thebad}) \geq \frac{(1+b'(\thebad))^2}{I_T(\thebad)} \geq \Omega\left( \frac{\delta^2}{   T\eps^2}\right).
\end{align*}
This concludes the proof of the lower bound.
 \end{proof}

 \subsection{Proof of lower bound (stochastic global oracle)} \label{pf:lb_online}

Recall \autoref{thm:lb_online} from the main text.
\lbonline*
\begin{proof}
The construction and argument are analogous to the proof of \autoref{thm:lb_sto_info} (\autoref{pf:lb_sto_info}).
 Fix $\eps>0$ and consider the following   family of cost functions $\{\Fth{\theta}:\R \to \R\}$ parametrized by  unknown ${\theta\in[1,2]}$:  
\begin{align} \label{cost:family_1}
    \Fth{\theta}(x) = \begin{cases}  
     200\eps \cdot \left\{\frac{1}{2} x^2 -\theta x \right\}, & \text{for}~x\in[1,2]\\
    \text{linear extension} & \text{for}~x\not\in[1,2].
    \end{cases}
\end{align} 
Note that $\Fth{\theta}$ is $\epss$-smooth for all $\theta\in[-1,1]$ and the minimum is achieved at $x=\theta$.
Below, let us fix a ground truth parameter $\theta$ and let $F(x) = \Fth{\theta}(x)$.

Now define $f(x,\xi)$ as follows: with probability $200\eps$,
\begin{align}
    f(x,\xi)  =  \begin{cases}  
     \frac{1}{200\eps} \Fth{}(x) +zx \quad \text{for }z\sim N(0,\frac{\delta^2}{2\cdot \epss}), &\text{if}~ x\in[1,2]\,,\\
    \frac{1}{200\eps} \Fth{}(x) + 2z. &\text{if}~x>2\,,\\ \frac{1}{200\eps} \Fth{}(x) + z&\text{if}~x<1\,, \end{cases}
\end{align}
and $f(x,\xi)=0$ with probability $1-200\eps$.
Then clearly we have $\E_{\xi} f(x,\xi)  =F(x)$.

We first check that this construction satisfies \autoref{def:global}.
It is sufficient to check the condition for $x\in[1,2]$ since outside the interval the cost is defined as the linear extension.
For $x\in[1,2]$, we have 
\begin{align} 
    \nabla f(x,\xi) &= \begin{cases} 
     x-\theta +z \quad \text{for }z\sim N(0,\frac{\delta^2}{2\cdot \epss}), &\text{w.p.}~\epss\,,\\
     0, &\text{w.p.}~1-\epss\,.
    \end{cases}
\end{align}
 This is precisely the expression \eqref{sto_grad} in the proof of \autoref{thm:lb_sto_info} (\autoref{pf:lb_sto_info}), and hence, 
 this clearly satisfies \autoref{def:global}.

 Now the key fact of the proof is that one can reconstruct the complete specification of the function $f(\cdot,\xi)$ based on a gradient query, provided that it is nonzero. 
 This is because if nonzero, the gradient query at $x$ is equal to $(x-\theta +z)$. This reveals $(\theta-z)$, from which one can reconstruct the complete characterization $f(x,\xi) = \frac{1}{2}x^2-(\theta-z)x$. 
 
 Hence, the information revealed by a single  query to the stochastic global oracle is as good as that revealed by a single query to the stochastic global oracle.
 Thus, the setting is reduced to that of \autoref{thm:lb_sto_info} (\autoref{pf:lb_sto_info}), and using the same argument, the proof follows. 
\end{proof}

  \section{Proof of lower bounds (smooth costs)} 
\label{sec:lb_smooth}

We first introduce a helper function we will use throughout the proofs of \sfa{} lower bounds in the remaining sections.
 
 \paragraph{Helper function for smooth costs lower bounds.} 
 We will frequently use the following function for the \sfa{} lower bounds for smooth cost.
 Let $\euF:\R\to \R$ be an one-dimensional function defined as
\begin{align} \label{hh}
\euF(x):=  \begin{cases}
 x^2 &\text{if } x\in[0,1],\\
  2x-1 &\text{if } x\geq 1,\\
    0 &\text{if } x\leq 0.
 \end{cases}
\end{align}
For reader's convenience, we illustrate the helper function $\euF$ in \autoref{fig:helper} below.
 \begin{figure}[H]
 \centering
  \begin{tikzpicture}[scale=0.7]
\begin{axis}[
    xmin=-1, xmax=3,
    ymin=-0.5, ymax=5,
    ]
\addplot [
    domain=0:1, 
    samples=100, 
    color=red, 
    line width=3pt
]
{x^2}; 
\addplot [
    domain=1:3, 
    samples=100, 
    color=red, 
    line width=3pt
]
{2*x-1};
\addplot [
    domain=-1:0, 
    samples=100, 
    color=red, 
    line width=3pt
]
{0};
\addplot[
    color=black,
    line width=2pt,dashed
    ]
    coordinates {
    (1,-0.5)(1,1) 
    }; 
\addplot[
    color=black,
    line width=2pt,dashed
    ]
    coordinates {
    (0,-0.5)(0,0) 
    }; 
\addplot[
    color=black,
    line width=2pt,dashed
    ]
    coordinates {
    (0.5,-0.5)(0.5,0.25) 
    }; 

\addplot[
    color=black,
    line width=2pt,dashed
    ]
    coordinates {
    (-1 ,0.25)(0.5,0.25) 
    };    
\draw (axis cs:0.5,-0.5) node[anchor=south east]{$0.5$};

\draw (axis cs:-1,0.25) node[anchor=south west]{$0.25$};

\end{axis}
\end{tikzpicture}
\caption{Illustration of the helper function $\euF$ for the smooth costs lower bounds.}
\label{fig:helper}
\end{figure}
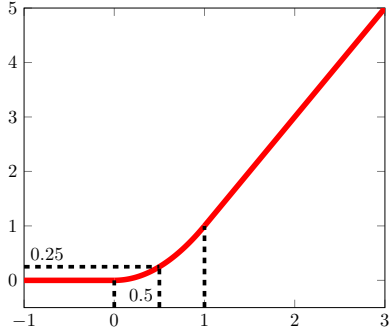

\subsection{Stochastic inexact gradient model}
\label{sec:foi_smooth}
The proofs of FOI lower bounds are quite technical and rely on delicate constructions, and to illustrate our general proof strategy, we first present a proof that is  relatively simpler, yet captures the essence of the later complicated constructions.
More specifically, in this section, we will prove the following FOI lower bounds for smooth costs against stochastic inexact oracle.

\begin{restatable}{theorem}{lbstofirst}{{\bf (Lower Bound)}}
 \label{thm:lb_sto_first}
Let $\eps>0$, and $T$ be a given number of iterations. There exists an $O(1)$-smooth convex function $f:\R^{O(T)}\to \R$ and a stochastic inexact gradient oracle   such that any \sfa{} algorithm $\alg$   that starts at $\vx_0=\vzero$ has its $(\eps,\delta)$-deviation  lower bounded by $\Omega(\frac{\delta^2}{T\eps^2})$.
\end{restatable}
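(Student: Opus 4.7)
I would adapt the Cram\'er--Rao argument from the information-theoretic lower bound \autoref{thm:lb_sto_info} to the \sfa{} setting, lifting its restriction $\eps\lesssim \delta^2$ by replacing the one-dimensional quadratic with a $\Theta(T)$-dimensional chain-structured instance built from the helper function $\euF$ in \eqref{hh}. The single function $f$ claimed in the theorem will be obtained as the worst-case member $f_{\thebad}$ of a parametric family $\{f_\theta\}_{\theta\in[-1,1]}$ exhibited below. This matches the broad structure of the proof of \autoref{thm:lb_sto_info}: (i) a family of hard instances, (ii) a Bernoulli-sparse stochastic oracle whose Fisher information per query is $\Theta(\eps^2/\delta^2)$, and (iii) an application of the biased Cram\'er--Rao inequality (\autoref{prop:cramerrao}).

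\emph{Construction.} Using $\euF$ (which is globally $C^{1}$, $2$-smooth, and vanishes on $(-\infty,0]$), I would build an $O(1)$-smooth convex function on $\R^{\Theta(T)}$ of Nesterov zero-chain form, scaled by $100\eps$, with the property that the minimizer depends affinely on the scalar $\theta$ and that the gradient of $f_\theta$ at any point supported on the first $k$ coordinates is supported on the first $k+1$ coordinates. Consequently, any \sfa{} iterate $\vx_t$ initialized at $\vzero$ lies in the span of the first $t$ coordinates, so the algorithm must genuinely traverse the chain to become $\eps$-accurate. The cutoff behavior of $\euF$ (linear growth beyond $1$, flat on the negatives) is precisely what makes both the zero-chain property and global $O(1)$-smoothness coexist.

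\emph{Oracle and Cram\'er--Rao step.} I would equip $f_\theta$ with a Bernoulli-sparse stochastic oracle modeled on \eqref{sto_grad}: with probability $200\eps$ return the rescaled true gradient $\nabla f_\theta(\vx)/(200\eps)$ plus independent Gaussian noise of variance $\Theta(\delta^2/\eps)$ on the active coordinates, and return $\vzero$ otherwise. The calculation already performed in \autoref{pf:lb_sto_info} verifies $\delta$-boundedness. Each nonzero response is an affine function of $\theta$ with Gaussian noise, contributing Fisher information $\Theta(\eps^2/\delta^2)$ about $\theta$, for a total of $\Theta(T\eps^2/\delta^2)$ after $T$ queries. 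The $\eps$-accuracy hypothesis together with Jensen's inequality forces the final iterate $\vx_T^\theta$, viewed as an estimator of $\theta$, to have bounded bias throughout $[-1,1]$ in exactly the manner of \eqref{condition:1}. Plugging the total Fisher information into \autoref{prop:cramerrao} yields $\var(\vx_T^{\thebad}) \geq \Omega(\delta^2/(T\eps^2))$ for some $\thebad$, which lower bounds the $(\eps,\delta)$-deviation since two independent runs produce outputs with this variance.

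\textbf{Main obstacle.} The subtle point is engineering the chain so that two properties hold simultaneously: (a) the zero-chain structure forces $\vx_t$ to remain in the first $t$ coordinates, so an \sfa{} algorithm cannot ``short-circuit'' to the optimum; and (b) every nonzero gradient response along any active coordinate carries scalar information about the \emph{single} parameter $\theta$, so that all $T$ queries—not just the last one—contribute to the Fisher information and hence produce the $1/T$ factor in the bound. A natural way to reconcile these is to let the minimizer take the form $\theta\cdot \vv^\star$ for a fixed staircase direction $\vv^\star$ progressively unmasked by the chain, so that each oracle call reveals one fresh scalar measurement of $\theta$, exactly paralleling the one-dimensional Cram\'er--Rao argument of \autoref{thm:lb_sto_info}. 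Verifying $\delta$-boundedness of the sparse oracle in the regime $\eps\gtrsim \delta^2$ (where the naive construction of \autoref{thm:lb_sto_info} fails) is then handled by spreading the Gaussian noise across the $\Theta(T)$ active coordinates.
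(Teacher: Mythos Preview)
Your approach is genuinely different from the paper's, which is far more elementary and bypasses Cram\'er--Rao entirely. The paper's proof exploits the \sfa{} structure directly: take $f(\dumx,y)=4\eps\,\euF(y+1)$ on $\R^T\times\R$, where the $T$ coordinates $\dumx$ are \emph{dummies} not appearing in $f$ at all. The oracle adds Rademacher noise $\delta r_t\ve_{1+t}$ to the $(1{+}t)$-th dummy coordinate at step $t$. Since $\eps$-accuracy forces $|y_T-y_0|\ge 1/2$ while $|\partial f/\partial y|\le 8\eps$ everywhere, the \sfa{} coefficients must satisfy $\sum_t|\lambda_t^{(T)}|\ge 1/(16\eps)$. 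The deviation in the dummy block is then $\delta^2\sum_t(\lambda_t^{(T)})^2\ge(\delta^2/T)\bigl(\sum_t|\lambda_t^{(T)}|\bigr)^2\gtrsim\delta^2/(T\eps^2)$ by Cauchy--Schwarz. No parametric family, no chain, no Fisher information---and no restriction on the relation between $\eps$ and $\delta$.

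Your route has real gaps beyond its vagueness. First, the theorem asserts a \emph{single} $f$ defeating every \sfa{} algorithm (quantifier order $\exists f\,\forall\alg$), whereas Cram\'er--Rao delivers $\forall\alg\,\exists\thebad$: the hard instance $f_{\thebad}$ depends on the algorithm. You acknowledge this (``the worst-case member $f_{\thebad}$'') but do not resolve it. Second, your plan to lift the $\eps\lesssim\delta^2$ restriction by ``spreading the Gaussian noise across the $\Theta(T)$ active coordinates'' does not address the actual obstruction: in the Bernoulli-sparse oracle of \eqref{sto_grad} the problematic variance term is $200\eps(1-200\eps)(x-\theta)^2$, which arises from the Bernoulli sampling of the rescaled true gradient, not from the Gaussian noise---spreading the Gaussian does nothing to it. Third, reconciling a Nesterov zero-chain (gradient at a point in the first $k$ coordinates supported on the first $k{+}1$) with having \emph{every} query carry $\Theta(\eps^2/\delta^2)$ Fisher information about the scalar $\theta$ is nontrivial: in a standard chain the early queries see coordinates that do not yet depend on $\theta$, so they contribute no information and the $1/T$ factor disappears. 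You have not specified a construction that achieves both properties simultaneously.
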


\begin{proof}
Now consider the cost $f: \vx=(\dumx,y)\in \R^T \times \R \to \R$ defined as
    \begin{align}\label{construct:lb_sto_first}
        f(\vx) = 4\eps\cdot  \euF(y+1)\,,
    \end{align}
where $\euF$ is defined in \eqref{hh}.
Here note that $\dumx \in \R^T$ is dummy coordinates which do not appear in the cost $f$.
Next, we define the stochastic inexact oracle for $t=0,1,\dots, T-1$ as
    \begin{align} \label{def:lb_sto_first_oracle}
        g(\vx_t) =\nabla f(\vx_t) + \delta r_t \ve_{1+t} \quad\text{where}~r_t\sim   \unif\{\pm 1\}\,,
    \end{align}
where $\ve_j$ is the $j$-th coordinate vector.
Then, clearly this stochastic gradient fulfills the definition of stochastic inexact gradient oracle.
Here the stochastic gradient noises are designed such that during the $t$-th iteration, the noise is added to the coordinate $\dumx[1+t]$.
In other words, the noises will be added to each coordinate of $\dum$ incrementally.
    
From here one, let us write iterates $\vx_t = (\dumx_t,y_t)$.
Note first that for $\eps$-accuracy, it must be that $|y_T- y_0|\geq 1/2$; otherwise $f(\vx_T) > 4\eps \cdot \euF(0.5)  = \eps$; see \autoref{fig:helper}.
Based on the definition of \sfa{} (see \eqref{exp:foi}), let us write 
\begin{align*}
    \vx_T = \vx_0 -\sum_{t=0}^{T-1} \lambda_t^{(T)} g(\vx_t)\,.
\end{align*}
Then from the construction \eqref{construct:lb_sto_first}, we know that for any $\vx$, we know $\frac{\partial f}{\partial y}(\vx)\in [0,8\eps]$. On the other hand, as we discussed, we need  $|y_T- y_0|\geq 1/2$. 
Hence, in order for iterates to move far enough from the starting point, the coefficients have to add up to a sufficiently large number:
\begin{align}   \label{cond:coeff}
    \sum_{t=0}^{T-1} |\lambda_t^{(T)}| \geq \frac{1}{16\eps}\,,
\end{align}  
since otherwise, $|y_T- y_0|< 8\eps \cdot \frac{1}{16\eps} =1/2$. 

Now we will make use of the condition \eqref{cond:coeff} to show that there is a large deviation in the coordinates $\dumx$.
More specifically, let us lower bound $\E\norm{\dumx_T -\E[\dumx_T]}^2$. From the construction of inexact oracle \eqref{def:lb_sto_first_oracle}, it follows that 
    \begin{align*}
    \E\norm{\dumx_T - \E[\dumx_T]}^2 & = \E\norm{\sum_{t=0}^{T-1}\lambda_t^{(T)} \delta r_t\cdot \ve_{1+t} }^2 = \sum_{t=0}^{T-1} (\lambda_t^{(T)})^2 \delta^2\E[r_t^2]  \\
    &=\sum_{t=0}^{T-1}(\lambda_t^{(T)})^2 \delta^2 \overset{(a)}{\geq} \delta^2 \cdot \frac{1}{T}\cdot \left( \sum_{t=0}^{T-1} |\lambda_t^{(T)}|\right)^2 \gtrsim \frac{\delta^2}{T\eps^2},
    \end{align*} 
    where ($a$) is due to Cauchy-Schwarz inequality.
This concludes the proof. \end{proof}

  \subsection{Non-stochastic inexact gradient model} \label{pf:lb_det_smooth}

 \begin{restatable}{theorem}{lbdetsmooth}{\bf (Lower Bound)}\label{thm:lb_det_smooth}
Let $\eps>0$ be a small constant, and $T$ be a given number of iterations. There exists a $O(1)$-smooth convex function $f:\R^{2}\to \R$ with a non-stochastic inexact gradient model  such that for any \sfa{} algorithm $\alg$  that starts at $\vx_0=\vzero$ has the $(\eps,\delta)$-deviation  lower bounded by $\Omega(\frac{\delta^2}{\eps^2})$.
 \end{restatable}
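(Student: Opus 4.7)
The plan is to adapt the construction of the stochastic smooth lower bound (\autoref{thm:lb_sto_first}) to the non-stochastic setting. In the stochastic proof the perturbation was spread over $T$ orthogonal dummy coordinates so that independent noise averaged to a $\delta^{2}/(T\eps^{2})$ bound; in the non-stochastic case we cannot average, so the whole perturbation is concentrated in a single fixed dummy direction, which is exactly how we lose the $1/T$ factor and end up with $\delta^{2}/\eps^{2}$.

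Concretely, using the helper function $\euF$ from \eqref{hh}, I would define $f \colon \R^{2} \to \R$ by $f(\vx) = 4\eps\, \euF(x_{2}+1)$, linearly extended outside $[0,1]$ in $x_{2}$ as in \eqref{construct:lb_sto_first}. This is convex and $O(1)$-smooth, and depends only on the second coordinate; the first coordinate $x_{1}$ plays the role of a pure dummy. The two non-stochastic inexact gradient oracles for $f$ are taken to be $g^{\pm}(\vx) = \nabla f(\vx) \pm \delta\, \ve_{1}$, both trivially $\delta$-bounded.

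Next I would carry out the accuracy/coefficient bookkeeping from the proof of \autoref{thm:lb_sto_first}. The $\eps$-accuracy requirement forces $x_{2,T}^{k} \le -1/2$ in each run, and since $\partial_{x_{2}} f \in [0,8\eps]$ this in turn forces the positive part of the \sfa{} coefficients to satisfy $\sum_{i:\, \lambda_{i}^{(T,k)} > 0} \lambda_{i}^{(T,k)} \gtrsim 1/\eps$ for each run $k \in \{1,2\}$. Because the true gradient has no first component, the first coordinate of the iterate is purely the accumulated perturbation: $x_{1,T}^{1} = -\delta \sum_{i} \lambda_{i}^{(T,1)}$ and $x_{1,T}^{2} = +\delta \sum_{i} \lambda_{i}^{(T,2)}$, so the squared deviation is $\delta^{2}\bigl(\sum_{i} \lambda_{i}^{(T,1)} + \sum_{i} \lambda_{i}^{(T,2)}\bigr)^{2}$, and the goal reduces to showing this is $\Omega(\delta^{2}/\eps^{2})$.

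The main obstacle, and the step I expect to require the most care, is precisely this last estimate: the lower bound controls the positive part $\sum_{\lambda>0} \lambda_i^{(T,k)}$, not the signed sum, so a sufficiently clever FOI could in principle cancel positive and negative coefficients to force $\sum_{i} \lambda_{i}^{(T,k)} = 0$ and thereby drive the $x_{1}$-deviation to zero. I plan to rule this out by exploiting the adaptivity permitted in \autoref{def:inexactgrad}(b): the adversary may pick the sign $r_{t}\in\{\pm 1\}$ of its step-$t$ perturbation online, based on the algorithm's computation so far, so as to make $\sum_{i} \lambda_{i}^{(T,k)} r_{i}$ line up with the positive part of the coefficient sequence rather than cancelling against it. Equivalently, a Yao-style argument with a random global sign on top of the construction shows that for any deterministic \sfa{} at least one of the two runs must end up with $|x_{1,T}^{k}| = \Omega(\delta/\eps)$, since otherwise the positive-part lower bound on either run would be violated. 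Combining this with the \sfa{} coefficient lower bound then yields the claimed $\Omega(\delta^{2}/\eps^{2})$ deviation.
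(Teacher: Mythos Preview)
Your construction and your identification of the obstacle are both on point, but the proposed fix does not close the gap. The difficulty is that with a constant perturbation $\pm\delta\,\ve_1$ the algorithm can \emph{read off} the sign $r_t$ from the first coordinate of $g(\vx_t)$ (since $\partial_{x_1}f\equiv 0$), and it chooses the final coefficients $\lambda_i^{(T)}$ only \emph{after} seeing all of $r_0,\dots,r_{T-1}$. Because the algorithm also controls the intermediate iterates, it can arrange for the active-direction gradients $a_i:=\partial_{x_2}f(\vx_i)=4\eps\,\euF'(y_i+1)$ to take at least two distinct positive values, which makes the vectors $(r_i)_i$ and $(a_i)_i$ linearly independent. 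Then the two linear constraints $\sum_i\lambda_i^{(T)}r_i=0$ (kill the dummy deviation) and $\sum_i\lambda_i^{(T)}a_i\ge 1/2$ (achieve $\eps$-accuracy) are simultaneously solvable for any $T\ge 2$. Concretely, with $T=2$: take $y_0=0$ so $a_0=8\eps$, move to $y_1=-1/2$ so $a_1=4\eps$, and set $\lambda_0^{(2)}=-\lambda_1^{(2)}=1/(8\eps)$; then $x_{1,T}=0$ and $y_T=-1/2$ in \emph{both} runs, so the deviation is zero. Neither adaptive signs nor the Yao-style random global sign helps, because the signs are still fully observable before $\lambda^{(T)}$ is chosen, and your positive-part bound $\sum_{\lambda_i>0}\lambda_i\gtrsim 1/\eps$ simply does not control the signed sum $\sum_i\lambda_i r_i$.

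The paper sidesteps this entirely by making the perturbation \emph{track} the active gradient rather than being constant: it sets the first component of $g(\vx_t)$ to a fixed multiple $c=\Theta(\delta/\eps)$ of $\partial_{x_2}f(\vx_t)$. Then, for \emph{any} choice of coefficients, $x_{1,T}=-\sum_i\lambda_i^{(T)}g(\vx_i)[1]=c\bigl(-\sum_i\lambda_i^{(T)}g(\vx_i)[2]\bigr)=c\,y_T$, so the dummy coordinate is locked to the active one. Comparing against the exact-gradient run (where $x_{1,T}^{\sf exact}=0$) and using $|y_T|\ge 1/2$ from $\eps$-accuracy gives $|x_{1,T}-x_{1,T}^{\sf exact}|=c|y_T|\gtrsim \delta/\eps$ with no cancellation issue at all.
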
 
\begin{proof}
With $\euF$ defined as \eqref{hh}, this time we consider a simpler construction: the cost $f: \vx=(x,y)\in \R \times \R \to \R$ is defined as
    \begin{align}\label{construct:lb_det_first}
        f(x,y) = 4\eps\cdot  \euF(y+1).
    \end{align}

Let us write the iterate  as $\vx_t = (x_t,y_t)$. Note that for $\eps$-accuracy, it must be that $y_T\geq 1/2$; otherwise $f(y_T)  >\eps$.
This means that in order to achieve $\eps$-suboptimality, the $y$-component of the iterate has to move at least constant distance away from the starting point.

Now consider the following non-stochastic inexact oracle
  \begin{align} \label{def:lb_det_1}
    g(\vx_{t}) = \nabla f(\vx_{t}) + \delta\cdot  \frac{\partial }{\partial y} \euF(y_{t})\cdot \ve_{1},
    \end{align}
    where $\ve_1$ is the first coordinate vector.
    Note that this is a valid oracle because $0\leq \frac{\partial }{\partial y} \euF(y)\in [0, 8\eps] \in [0,1]$ for all $y$.
    Then from the construction of the inexact gradient oracle \eqref{def:lb_det_1}, it follows that 
    \begin{align} \label{rel:xy}
        \frac{\delta }{4\eps}y_T = x_T
    \end{align}
    Letting $\vx_T^{\sf exact}= (x_t^{\sf exact},y_t^{\sf exact})$ be the iterate with exact gradients (without the gradient noises), since we know $x_T^{\sf exact}=0$,   \eqref{rel:xy} implies  
    \begin{align*}
     \norm{\vx_T - \vx_T^{\sf exact}}^2   &\geq  \left|x_T - x_T^{\sf exact}\right|^2 = x_T^2 \gtrsim \frac{\delta^2}{\eps^2}.
    \end{align*}
    This is the desired lower bound.
\end{proof}
 \subsection{Inexact initialization model}
 \label{lb:init_smooth}

   \begin{restatable}{theorem}{lbinitsmooth}{\bf (Lower Bound)}\label{thm:lb_init_smooth}
Let $\eps>0$ be a small constant, and $T$ be a given number of iterations. There exists a $O(1)$-smooth convex function $f:\R^{O(T)}\to \R$  such that for any \sfa{} algorithm $\alg$    the $(\eps,\delta)$-deviation lower bounded by $\Omega(\delta^2)$ w.r.t. the reference point  $\vx_0^{\sf ref}=\vzero$.
 \end{restatable}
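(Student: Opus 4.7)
The strategy follows the ``dummy coordinate'' device used in the previous smooth \sfa{} lower bounds (cf.~Theorems~\ref{thm:lb_sto_first} and~\ref{thm:lb_det_smooth}): design $f$ so that every gradient lies in a subspace orthogonal to some coordinate direction, and then let the two runs differ maximally along that direction at initialization. Because any \sfa{} iterate is the initial point plus a linear combination of observed gradients, the dummy coordinate of the output equals the dummy coordinate of the initialization, and adaptive first-order access gives the algorithm no information to correct this.

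Concretely, using the helper $\euF$ from \eqref{hh}, define $f:\R^2\to\R$ by
\begin{align*}
f(u,y) \;=\; 4\eps\cdot\euF(y+1).
\end{align*}
Then $f$ is convex and $O(\eps)=O(1)$-smooth (following the paper's convention), and $\nabla f(u,y)=\bigl(0,\,4\eps\,\euF'(y+1)\bigr)$ has vanishing first coordinate at every point. With reference $\vx_0^{\sf ref}=\vzero$, consider two valid outputs of a $\delta$-bounded initialization oracle, $\vx_0=(\delta,0)$ and $\vx_0'=(-\delta,0)$, each at distance exactly $\delta$ from $\vzero$.

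From \eqref{exp:foi}, the two runs produce outputs
\begin{align*}
\vx_T \;=\; \vx_0 - \sum_{i=0}^{T-1}\lambda_i^{(T)}\, g(\vx_i),\qquad
\vx_T' \;=\; \vx_0' - \sum_{i=0}^{T-1}\lambda_i'^{(T)}\, g(\vx_i'),
\end{align*}
where the (possibly different) adaptive coefficients may depend on everything the algorithm has observed, including the initial point itself. Since every gradient $g(\cdot)$ has zero first coordinate, projecting onto the first basis vector yields $u_T=u_0=\delta$ and $u_T'=u_0'=-\delta$, irrespective of the $\lambda$'s. Hence
\begin{align*}
\|\vx_T-\vx_T'\|^2 \;\geq\; |u_T-u_T'|^2 \;=\; 4\delta^2 \;=\; \Omega(\delta^2).
\end{align*}

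It remains to check that both runs are indeed $\eps$-accurate, so that this deviation is observed between two $\eps$-accurate outputs. Since $\euF(\tfrac12)=\tfrac14$, we have $f(u,y)\leq\eps$ whenever $y\leq-\tfrac12$, and standard gradient descent on the scalar convex function $y\mapsto 4\eps\cdot\euF(y+1)$ starting at $y_0=0$ drives $y$ below $-\tfrac12$ with a suitably chosen stepsize for any $T=\Omega(1/\eps)$; this can be carried out within both runs and is unaffected by the perturbation in the $u$-direction. The construction presents essentially no technical obstacle: the whole content of the argument is that a first-order method is blind to a direction in which $f$ has no variation, so an initialization error confined to that direction propagates verbatim into the output.
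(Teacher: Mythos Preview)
Your proof is correct and follows essentially the same approach as the paper: introduce a dummy coordinate on which $f$ does not depend, perturb the initialization along that coordinate, and observe that any \sfa{} output retains the initial value in that coordinate since all gradients are orthogonal to it. The paper uses the even simpler choice $f(x,y)=(y-1)^2$ and compares the run from $(\delta,0)$ against the reference run from $\vzero$, but this is a cosmetic difference; your use of $4\eps\cdot\euF(y+1)$ and the pair $(\pm\delta,0)$ yields the same $\Omega(\delta^2)$ bound.
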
 
  \begin{proof}
 Consider $f:\R^2 \to \R$ defined as $f(x,y)=(y-1)^2$. Choose $x_0^{\sf ref}=0$ and the inexact initialization to be $x_0=(\delta,0)$.
Then, any first order algorithm only updates the second coordinate, which implies that after $T$ iterations, we still have $\norm{x_T-x_T^{\sf ref}}\geq \delta^2$.
\end{proof}

  \subsection{Stochastic inexact gradient model (strongly convex costs)} \label{lb:smooth_sto_str}
 
 \begin{restatable}{theorem}{lbstosmoothstr}{\bf (Lower Bound)}\label{thm:lb_sto_smooth_str}
Let $\eps>0$  and $T$ be a given number of iterations. There exists a $O(1)$-smooth  and $\mu$-strongly convex function $f:\R^{O(T)}\to \R$ with a stochastic inexact gradient model such that any \sfa{} algorithm $\alg$   that starts at $\vx_0=\vzero $ has its $(\eps,\delta)$-deviation   lower bounded by $\Omega(\frac{\delta^2}{T\mu^2} \wedge  \frac{\eps}{\mu})$.
 \end{restatable}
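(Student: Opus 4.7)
The plan is to adapt the three-step proof of \autoref{thm:lb_sto_first} to the strongly convex regime by adding a quadratic regularizer. Concretely, I take the cost
\[
f(\vx) = \mu \cdot \euF(y+1) + \tfrac{\mu}{2}\|\vx\|^2, \qquad \vx = (\dumx, y) \in \R^T \times \R,
\]
where $\euF$ is the helper function defined in \eqref{hh}, together with the same stochastic inexact oracle $g(\vx_t) = \nabla f(\vx_t) + \delta r_t \ve_{1+t}$, $r_t \sim \unif\{\pm 1\}$, as in \eqref{def:lb_sto_first_oracle}. This $f$ is $O(1)$-smooth and $\mu$-strongly convex with unique minimizer $\vx^\star = (\vzero, -2/3)$, which sits $\Omega(1)$ away from the initialization $\vx_0 = \vzero$. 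Crucially, each noise draw $r_t$ still perturbs a \emph{distinct} dummy coordinate, so the dummy-coordinate dynamics decouple from each other.

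The three-step argument then proceeds in direct analogy to \autoref{thm:lb_sto_first}. First, $\mu$-strong convexity upgrades $\E f(\vx_T) - \min f \leq \eps$ into $\E \|\vx_T - \vx^\star\|^2 \leq 2\eps/\mu$; combined with Jensen's inequality this forces $\E y_T \leq -1/2$ whenever $\eps/\mu$ is below a fixed absolute constant. Second, since the $y$-partial derivative of $f$ is uniformly bounded by $O(\mu)$ on the relevant range, the \sfa{} expansion $y_T = -\sum_t \lambda_t^{(T)} [\nabla f(\vx_t)]_y$ forces $\E \sum_t |\lambda_t^{(T)}| = \Omega(1/\mu)$. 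Third, Cauchy--Schwarz yields $\E \sum_t (\lambda_t^{(T)})^2 = \Omega(1/(T\mu^2))$. Since the cost is separable across coordinates, the $u_{s+1}$-coordinate of $\vx_T$ depends only on $r_s$ and can be written as $A_s \cdot \delta r_s$ for an effective coefficient $A_s$; once I establish $\sum_s A_s^2 \gtrsim \sum_s (\lambda_s^{(T)})^2$, I obtain $\var(\vx_T) \gtrsim \delta^2/(T\mu^2)$, and since two independent runs give $\E \|\vx_T - \vx'_T\|^2 = 2\var(\vx_T)$, this establishes the first branch.

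The principal obstacle, absent from \autoref{thm:lb_sto_first}, is the quadratic regularizer producing a gradient $\mu u$ on dummy coordinates, so the noise $\delta r_s$ deposited at step $s$ on $u_{s+1}$ is no longer preserved. Unrolling the \sfa{} update on that coordinate yields the recursion
\[
A_s^{(T)} = -\lambda_s^{(T)} - \mu \sum_{\tau=s+1}^{T-1} \lambda_\tau^{(T)} \, B_s^{(\tau)}, \qquad B_s^{(s+1)} = -\lambda_s^{(s+1)},
\]
with $B_s^{(\tau)}$ defined by the same recursion at truncation level $\tau$. My plan is to exploit the observation that the first branch of the target minimum is binding precisely when $\mu \sum_t |\lambda_t^{(T)}| = O(1)$: in this regime a Neumann-series/induction argument on the lower-triangular system defining $B_s^{(\cdot)}$ shows it is a bounded perturbation of the $\mu = 0$ identity $A_s^{(T)} = -\lambda_s^{(T)}$, giving $\sum_s A_s^2 \gtrsim \sum_s (\lambda_s^{(T)})^2$ up to an absolute constant. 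The complementary $\Omega(\eps/\mu)$ branch is handled by a short separate one-dimensional argument: on a shifted quadratic $f(x) = \tfrac{\mu}{2}(x - x^\star)^2$ with $|x^\star| = \Theta(\sqrt{\eps/\mu})$, the initialization $x_0 = 0$ is not $\eps$-accurate, so any valid one-step scheme must pick $\lambda_0^{(1)} = \Theta(1/\mu)$ to move $\E x_1$ into the $\sqrt{\eps/\mu}$-neighborhood of $x^\star$, which forces $\var(x_1) = (\lambda_0^{(1)})^2 \delta^2 = \Omega(\delta^2/\mu^2)$; this is at least $\Omega(\eps/\mu)$ in the regime $\delta^2/(T\mu^2) \geq \eps/\mu$. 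Taking the minimum of the two bounds yields the stated $\Omega(\delta^2/(T\mu^2) \wedge \eps/\mu)$.
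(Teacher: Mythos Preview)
Your approach diverges from the paper's in a way that opens a real gap. The paper does not use $\mu\,\euF(y+1)$; it takes the cost $f(\vx) = y + \tfrac{\mu}{2}y^2 + \tfrac{\mu}{2}\|\dumx\|^2$ (linear-plus-quadratic in $y$), and its central device is a \emph{conservation law} (Lemma~\ref{lem:smooth_str}): for any \sfa, the iterates satisfy $\delta\,|y_t| \le \sum_i |\dumx_t[i]|$ at every step. This follows by induction purely from the update rule and the gradient structure $\partial_y f = 1 + \mu y$, $\partial_{\dumx[i]} f = \mu\,\dumx[i]$; the constant term ``$1$'' in the $y$-gradient exactly balances the single $\delta$-noise injected into a fresh dummy coordinate each step. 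No information about intermediate step sizes is needed: $\eps$-accuracy forces $|y_T| \gtrsim 1/\mu$, so $\sum_i |\dumx_T[i]| \gtrsim \delta/\mu$, and Cauchy--Schwarz finishes the $\delta^2/(T\mu^2)$ branch. The $\eps/\mu$ branch falls out of the \emph{same} construction, via a simple dichotomy on whether $\sum_i \dumx_T[i]^2$ is large.

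Your Neumann-series route, by contrast, requires control over the intermediate coefficients $\lambda_j^{(\tau)}$ for \emph{every} $\tau \le T$, since the recursion for $A_s^{(T)}$ passes through $B_s^{(\tau)}$, which in turn depends on $\lambda_j^{(\tau')}$ for $\tau' < \tau$. The \sfa framework places no constraint on these: only the output $\vx_T$ need be $\eps$-accurate, so the algorithm may take arbitrarily large excursions at intermediate steps (with $\mu\sum_j |\lambda_j^{(\tau)}| \gg 1$), and your perturbation bound collapses. A second issue is adaptivity: the coefficients $\lambda^{(t)}$ may depend on the observed noises $r_0,\dots,r_{t-1}$, which undercuts the claim that $\dumx_T[s+1]$ is a deterministic multiple $A_s \cdot \delta r_s$ of a single Rademacher. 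Finally, your $\eps/\mu$ branch invokes a separate one-dimensional function, but the theorem demands a \emph{single} $f$ that witnesses the lower bound against all \sfa algorithms simultaneously; the paper's construction delivers both branches at once.
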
 

\begin{proof} 
For   $\vx=(\dumx,y)$ where $\dumx \in \R^{T}$ and $y\in\R$, consider the cost
\begin{align*}
f(x,y) := y+ \frac{\mu}{2} y^2 +\frac{\mu}{2}\norm{\vx}^2\,.
\end{align*}
We consider the initialization $\vx_0 =(0,0,0,\dots,0)$.

Next, we define the stochastic inexact oracle for $t=0,1,\dots, T-1$ as
    \begin{align} \label{def:smooth_str_oracle}
       \bm{g}(\vx_t) =\nabla f(\vx_t) + \delta r_t \cdot \ve_{1+t} \quad\text{for}~r_t\sim   \unif\{\pm 1\}\,,
    \end{align}
    where $\ve_j$ is the $j$-th coordinate vector.
    Also, throughout the proof we use the notation: 
\begin{align*}
     \dumg_t:=\bm{g}(\dumx_t,y_t)[1,2,\dots,T] \quad \text{and} \quad  g^y_t:= \bm{g}(\dumx_t,y_t)[T+1]\,.
\end{align*}
    
\paragraph{Warm-up: the case of simplified gradient noises.}
For a moment, we assume that the inexact gradient oracle is non-stochastic with
 \begin{align*} 
        \bm{g}(\vx_t) =\nabla f(\vx_t) + \delta \cdot \ve_{1+t}\,.
    \end{align*}
We consider this case first to build the key intuition of the proof.
The first prove the following result that is crucial for the proof.

    \begin{lemma} \label{lem:smooth_str}
For each $t=0,1,\dots, T-1$,  the output of a \sfa{} algorithm satisfies 
\begin{align} \label{sum:smooth_str}
 \delta\cdot  y_{t}    = \sum_{i=1}^T x_{t}[i]  \quad \text{and} \quad \delta\cdot g^y_t = \sum_{i=1}^T \dumg_t[i]\,,
   \quad \text{for each }t=0,1,2,\dots, T.
 \end{align}
\end{lemma}
\begin{proof}
We prove by induction on $t$.
The statement trivially holds for $t=0$.
Assume that the conclusion holds for some $t$.
We will first show that
\begin{align}  \label{ind:smooth_1}
 \delta \cdot  y_{t+1}    &= \sum_{i=1}^T x_{t+1}[i] \,.
\end{align}
By the definition of FOI (see \eqref{exp:foi}), we have  
\begin{align*}
  \delta \cdot y_{t+1} &=   -\delta \cdot\sum_{j=0}^{t} \la^{(t+1)}_j g^y_j \\
     &=   -\sum_{j=0}^{t} \la^{(t+1)}_j \left( \sum_{i=1}^T \dumg_j[i] \right)
     =  -  \sum_{i=1}^T\sum_{j=0}^{t} \la^{(t+1)}_j  \dumg_j[i] \\
     &= \sum_{i=1}^T x_{t+1}[i].
\end{align*}
This completes the proof of \eqref{ind:smooth_1}. Next, we will show that 
\begin{align} \label{ind:smooth_2}
\delta \cdot g^y_{t+1}  = \sum_{i=1}^{T} \dumg_{t+1}[i].
\end{align}
This follows because
\begin{align*}  
 \delta \cdot g^y_{t+1} &= \delta (1+ \mu y_{t+1}) = \delta + \mu \left( \sum_{i=1}^{T} \dumx_{t+1}[i] \right)  \\
&\overset{(a)}{=} \dumg_{t+1}[t+2]+ \sum_{i=1}^{t+1} \dumg_{t+1}[i]  =  \sum_{i=1}^T \dumg_{t+1}[i],
\end{align*}
 where ($a$) uses the fact that $\dumx_{t+1}[i]=0$ for all $i>t+1$.
\end{proof}
By Lemma~\ref{lem:smooth_str}, it holds that
\begin{align} \label{eq:final}
    \delta\cdot  y_{T}    &= \sum_{i=1}^T \dumx_{T}[i] 
\end{align}
On the other hand, in order to achieve $\eps$-suboptimality, we need $y_T^2 \gtrsim \frac{1}{\mu^2}$.
Moreover, for  $\eps$-suboptimality, we also need $\sum_{i=1}^T \dumx_T[i]^2\lesssim \frac{\eps}{\mu}$.
Therefore, letting $\vx_T^{\sf exact}=(
(\dumx_T)^{\sf exact},
y_T^{\sf exact})$ be the iterate with exact gradients, since $(\dumx_T)^{\sf exact}=0$,  the condition \eqref{eq:final} implies the following:
whenever $\sum_{i=1}^T \dumx_T[i]^2\lesssim \frac{\eps}{\mu}$,
    \begin{align*}
     \norm{\vx_T - \vx_T^{\sf exact}}^2   &\geq  \norm{\dumx_T - (\dumx_T)^{\sf exact}}^2 = \sum_{i=1}^T \dumx_T[i]^2 \\
     &\geq \frac{1}{T }  \left( \sum_{i=1}^T \dumx_T[i] \right)^2  =\frac{1}{T }\cdot   \delta^2\cdot \left( y_T \right)^2 \gtrsim \frac{\delta^2}{T\mu^2}.
    \end{align*}
    This is precisely equal to the desired lower bound.

\paragraph{Actual proof for the stochastic noise case.}
    Now coming back to the stochastic inexact gradient \eqref{def:smooth_str_oracle}, one can prove the following analog of Lemma~\ref{lem:smooth_str}:
    \begin{align} \label{sum:smooth_str_sto}
\begin{split}
 \delta\cdot  |y_{t}|  \leq  \sum_{i=1}^T |\dumx_{t}[i]|  \quad \text{and}\quad
\delta\cdot |g^y_t| \leq  \sum_{i=1}^T |\dumg_t[i]|\,,
   \end{split}
   \quad \text{for each }t=0,1,2,\dots, T.
 \end{align}
Here we note that the construction ensures that $|\dumx_T[i]|$'s are deterministic quantities (because regardless of whether $r_t=\pm 1$ the absolute value is the same), and that is why we do not write the expectation operators next to them.

The above result holds for the following reason,
when $r_t =+1$ for all $t$, the stochastic inexact gradient reduces to the non-stochastic inexact gradient, in which case the equality holds in \eqref{sum:smooth_str_sto} without absolute values.
With $r_t =\pm 1$, one can no longer argue this.
On the other hand, one can apply triangle inequalities to obtain \eqref{sum:smooth_str_sto}.

Again,  in order to achieve $\eps$-suboptimality, we need $y_T^2 \gtrsim \frac{1}{\mu^2}$ and $\sum_{i=1}^T \dumx_T[i]^2\lesssim \frac{\eps}{\mu}$.
Therefore, whenever $\sum_{i=1}^T \dumx_T[i]^2\lesssim \frac{\eps}{\mu}$, we have 
    \begin{align*}
    \E\norm{\vx_T - \E[\vx_T]}^2 &\geq    \sum_{i=1}^{T} |\dumx_T[i]|^2 \geq \frac{1}{T} \left(\sum_{i=1}^{T} |\dumx_T[i]|\right)^2  \geq \frac{1}{T} \cdot \delta^2\cdot  |y_T|^2 \gtrsim \frac{\delta^2}{T\mu^2}.
    \end{align*}  
This completes the proof. \end{proof}

 \subsection{Non-stochastic inexact gradient model} \label{lb:smooth_det_str}

\begin{restatable}{theorem}{lbdetsmoothstr}{\bf (Lower Bound)}\label{thm:lb_det_smooth_str}
Let $\eps>0$ be a small constant, and $T$ be a given number of iterations. There exists a $O(1)$-smooth $\mu$-strongly convex function $f:\R^{2}\to \R$ with a non-stochastic inexact gradient model  such that for any \sfa{} algorithm $\alg$  that starts at $\vx_0=\vzero$  has the $(\eps,\delta)$-deviation  lower bounded by $\Omega(\frac{\delta^2}{\mu^2} \wedge \frac{\eps}{\mu})$.
 \end{restatable}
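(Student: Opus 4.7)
The plan is to combine the strongly-convex-plus-linear construction of Theorem~\ref{thm:lb_sto_smooth_str} with the rigid ``fixed-ratio coupling'' trick of Theorem~\ref{thm:lb_det_smooth}. Take the cost $f(x,y) = \tfrac{\mu}{2}(x^2+y^2) + y$, which is $\mu$-strongly convex, $\mu$-smooth, and has unique minimizer $\vx^* = (0,-1/\mu)$. Define $\delta_{\mathrm{eff}} := \min\{\delta,\, c\sqrt{\mu\eps}\}$ for a small absolute constant $c$, and consider the pair of $\delta$-bounded non-stochastic inexact oracles
\[
g_\pm(\vx) = \nabla f(\vx) \pm \delta_{\mathrm{eff}}\, \ve_1 = (\mu x \pm \delta_{\mathrm{eff}},\; \mu y + 1),
\]
each of which deviates from $\nabla f$ by norm at most $\delta_{\mathrm{eff}} \le \delta$.

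The key algebraic observation is that on the line $\{x = \pm \delta_{\mathrm{eff}}\, y\}$ the two components of $g_\pm$ are proportional: $g_\pm^x(\vx) = \pm \delta_{\mathrm{eff}}\, g_\pm^y(\vx)$. Since $\vx_0 = \vzero$ lies on both lines, a straightforward induction using the FOI update $\vx_t = \vx_0 - \sum_{i<t}\lambda_i^{(t)} g(\vx_i)$ shows that every iterate of the run using $g_\pm$ remains on the corresponding line, i.e.\ $x_t^{\pm} = \pm\,\delta_{\mathrm{eff}}\, y_t^{\pm}$ for all $t$, regardless of how the adaptive coefficients are chosen. Using $\mu$-strong convexity and the assumed $\eps$-accuracy of both runs on $f$, each final iterate satisfies $(x_T^\pm)^2 + (y_T^\pm + 1/\mu)^2 \le 2\eps/\mu$, which for $\eps$ sufficiently small (say $\eps \le 1/(32\mu)$) forces both $y_T^+$ and $y_T^-$ to lie in $[-1/\mu - \sqrt{2\eps/\mu},\, -1/\mu + \sqrt{2\eps/\mu}]$, so $|y_T^+ + y_T^-| \ge 1/\mu$. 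Combining with the coupling identity gives
\[
\|\vx_T^+ - \vx_T^-\|^2 \ge (x_T^+ - x_T^-)^2 = \delta_{\mathrm{eff}}^2 (y_T^+ + y_T^-)^2 \ge \tfrac{\delta_{\mathrm{eff}}^2}{\mu^2} = \Omega\!\bigl(\tfrac{\delta^2}{\mu^2} \wedge \tfrac{\eps}{\mu}\bigr).
\]

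The main obstacle is calibrating the constant $c$ so that the lower bound is non-vacuous---the oracle $g_\pm$ must be mild enough that some $\eps$-accurate FOI algorithm actually exists, since the theorem quantifies only over such algorithms. On the trapping line $\{x = \pm\delta_{\mathrm{eff}} y\}$ the cost $f$ restricts to $\tfrac{\mu(1+\delta_{\mathrm{eff}}^2)}{2} y^2 + y$, whose minimum value $-1/\bigl(2\mu(1+\delta_{\mathrm{eff}}^2)\bigr)$ is within $\delta_{\mathrm{eff}}^2/\bigl(2\mu(1+\delta_{\mathrm{eff}}^2)\bigr) \le \delta_{\mathrm{eff}}^2/(2\mu)$ of $f^* = -1/(2\mu)$. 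Choosing $c = 1$ makes this line-wise suboptimality at most $\eps/2$, so vanilla gradient descent on $g_\pm$ achieves $\eps$-accuracy in $T = O(\log(1/\eps))$ iterations, confirming that both runs can simultaneously be $\eps$-accurate and completing the argument.
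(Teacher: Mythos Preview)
Your proof is correct and follows essentially the same approach as the paper: the identical cost $f(x,y)=\tfrac{\mu}{2}(x^2+y^2)+y$ and the same coupling identity (the inexact oracle forces $x_t$ to be a fixed multiple of $y_t$, so $\eps$-accuracy in the $y$-coordinate forces an $\Omega(\delta/\mu)$ displacement in the $x$-coordinate). Your two refinements---comparing symmetric runs $g_\pm$ rather than an inexact run against the exact one, and explicitly truncating to $\delta_{\mathrm{eff}}=\min\{\delta,\sqrt{\mu\eps}\}$---make the $\wedge\,\eps/\mu$ part of the bound cleaner than the paper's somewhat implicit treatment, but do not change the argument's substance.
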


\begin{proof}
For simplicity, we assume throughout the proof that $D=1$ and for $\vx=(x,y)$ where $x,y\in \R$, consider the cost
\begin{align*}
f(x,y) := y+ \frac{\mu}{2} y^2 +\frac{\mu}{2}x^2
\end{align*} 
We consider the initialization $\vx_0 =(0,0)$.
Next, consider the following non-stochastic inexact oracle
  \begin{align} \label{def:lb_det_2}
    g(\vx_{t}) = \nabla f(\vx_{t}) + \delta \ve_{1}
    \end{align}
    where $\ve_1$ is the first coordinate vector.
Then from this construction, one can verify similarly to \autoref{lem:smooth_str} that 
\begin{align*}
    g(\vx_t)[1] = \delta \cdot  g(\vx_t)[2]\quad \text{and}\quad x_t =\delta\cdot y_t
\end{align*}
for $t=0,1,\dots, T$.

Now from the $\eps$-suboptimality, it must be that $y_T^2 \gtrsim \frac{1}{\mu^2}$ and $x_T^2 \lesssim \frac{\eps}{\mu}$.
Hence, whenever $x_T^2 \lesssim \frac{\eps}{\mu}$ holds, we have the following deviation bound since $x_T^{\sf exact }=0$:
 \begin{align*}
     \norm{\vx_T - \vx_T^{\sf exact}}^2   &\geq  |x_T - x_T^{\sf exact}|^2 = \delta^2\cdot y_t^2  \gtrsim \frac{\delta^2}{\mu^2}.
    \end{align*} 
    This completes the proof.
\end{proof}

\subsection{Inexact initialization model (strongly convex costs)}
\label{lb:smooth_init_str}

   \begin{restatable}{theorem}{lbinitsmoothstr}{\bf (Lower Bound)}\label{thm:lb_init_smooth_str}
Let $\eps>0$ be a small constant, and $T$ be a given number of iterations. There exists a $O(1)$-smooth $\mu$-strongly convex function $f:\R^{\Omega(T)}\to \R$  such that for any \sfa{} algorithm $\alg$    the $(\eps,\delta)$-deviation lower bounded by $\Omega(\exp(-\Omega( T))\delta^2 \wedge \frac{\eps}{\mu})$ w.r.t. the reference point  $\vx_0^{\sf ref}=\vzero$.
 \end{restatable}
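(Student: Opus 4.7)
The plan is to adapt the classical Nesterov-type lower bound for smooth strongly convex minimization to the inexact-initialization setting, by leveraging the fact that an \sfa{} algorithm starting exactly at the optimum does nothing, while the same algorithm starting at a nearby point cannot collapse its distance to the optimum faster than the Chebyshev/Krylov rate.

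Concretely, I would take the Nesterov hard quadratic in dimension $d = T+1$,
\[
f(\vx) \ = \ \tfrac{\mu}{2}\|\vx\|^2 \ + \ \tfrac{L-\mu}{8}\bigl(\vx^\T A \vx - 2c\,\ve_1^\T \vx\bigr),
\]
where $A = \operatorname{tridiag}(-1,2,-1)$ and $c>0$ is a scaling constant chosen so that the (unique) minimizer $\vx^\star$ of $f$ has $\|\vx^\star\| = \delta$; since $\vx^\star = c\cdot \tfrac{L-\mu}{4}Q^{-1}\ve_1$ with $Q = \mu I + \tfrac{L-\mu}{4}A$, this is a one-parameter rescaling that preserves $L$-smoothness and $\mu$-strong convexity, and so $\kappa := L/\mu = O(1)$. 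I would then set the reference to $\vx_0^{\sf ref} := \vx^\star$, and consider the two initializations $\vx_0^{(1)} = \vx^\star$ and $\vx_0^{(2)} = \vzero$, both of which lie within distance $\delta$ of $\vx_0^{\sf ref}$ and are therefore legitimate outputs of a $\delta$-bounded inexact initialization oracle.

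The analysis then splits trivially. For Run~1, $\nabla f(\vx^\star) = \vzero$, so every \sfa{} update leaves the iterate at $\vx^\star$ and hence $\vx_T^{(1)} = \vx^\star$. For Run~2, the key structural fact is that on a quadratic, an \sfa{} iterate can be written as $\vx_t - \vx^\star = P_t(Q)(\vx_0 - \vx^\star)$ for some polynomial $P_t$ of degree $\le t$ with $P_t(0)=1$; the bandedness of $Q$ together with $\vx_0^{(2)} = \vzero$ forces $\vx_t^{(2)}$ to have zero entries in all coordinates beyond index $t$. Combined with the geometric profile $\vx^\star_i \propto q^i$ with $q = (\sqrt{\kappa}-1)/(\sqrt{\kappa}+1) \in (0,1)$, this yields Nesterov's classical bound
\[
\|\vx_T^{(2)} - \vx^\star\|^2 \ \geq \ q^{2(T+1)} \|\vx^\star\|^2 \ = \ \exp\!\bigl(-\Omega(T)\bigr)\,\delta^2
\]
for $\kappa = O(1)$. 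Putting the two runs together, the deviation obeys
\[
\|\vx_T^{(1)} - \vx_T^{(2)}\|^2 \ = \ \|\vx^\star - \vx_T^{(2)}\|^2 \ \geq \ \exp\!\bigl(-\Omega(T)\bigr)\,\delta^2.
\]

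The $\wedge\,\eps/\mu$ clamp is the natural ceiling imposed by strong convexity: whenever the algorithm is $\eps$-accurate, each output lies within $\sqrt{2\eps/\mu}$ of $\vx^\star$, so the squared deviation is automatically $O(\eps/\mu)$; equivalently, in the regime $\exp(-\Omega(T))\delta^2 \gtrsim \eps/\mu$, the construction above would already contradict $\eps$-accuracy of Run~2, and the argument is read as saying the effective lower bound saturates at $\Omega(\eps/\mu)$, exactly matching the style used in Theorems~\ref{thm:lb_sto_smooth_str} and~\ref{thm:lb_det_smooth_str}. The main technical point to verify carefully is that the adaptive coefficients $\lambda_i^{(t)}$ in the \sfa{} recursion do not buy anything beyond the Krylov-subspace structure — but this is exactly the content of Nesterov's ``linear span'' lower bound, for which the \sfa{} class in \eqref{exp:foi} is precisely the canonical model, so no new ingredient is needed beyond citing that result.
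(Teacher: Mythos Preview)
Your proposal is correct and follows essentially the same route as the paper. Both arguments instantiate Nesterov's hard strongly-convex quadratic and exploit the Krylov/span restriction of \sfa{} iterates: one run is planted at (or contains a block at) the optimum and therefore never moves, while the other run starts a distance $\delta$ away and, by Nesterov's classical bound, cannot close that gap faster than $q^{2T}\delta^2$ with $q=(\sqrt{\kappa}-1)/(\sqrt{\kappa}+1)$.

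The only cosmetic difference is the packaging. The paper works in $\ell_2\times\ell_2$ with $f(\vx)=f^{\sf Nes}(x)+f^{\sf Nes}(y)$; the reference initialization has $y$ exactly at the optimum $(q,q^2,\ldots)$ and the perturbed initialization truncates this to $(q,\ldots,q^L,0,\ldots)$ with $L$ chosen so that the tail has norm $\leq\delta$. The span argument then says the perturbed $y$-block can only fill in one extra coordinate per step, leaving a tail of size $q^{2(L+T)}\approx q^{2T}\delta^2$. Your version collapses this to a single block, rescaled so $\|\vx^\star\|=\delta$, with the two runs at $\vx^\star$ and $\vzero$; the arithmetic is the same. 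The paper's extra $x$-block (started at $\vzero$ for both runs) plays no role in the deviation bound and your omission of it is harmless. Two minor caveats: the exact profile $\vx^\star_i\propto q^i$ holds in $\ell_2$, not in $\R^{T+1}$, which is why the paper works in $\ell_2$ and then remarks that finite dimension $\Omega(T)$ suffices; and your reference point $\vx_0^{\sf ref}=\vx^\star$ differs from the $\vzero$ in the theorem statement, but the paper's own proof also uses a nonzero reference, so this is an artifact of the statement rather than a flaw in either argument.
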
 
\begin{proof} 
We use the construction in \cite[Theorem 2.1.13]{nesterov2018lectures}. 
In particular, for simplicity, we consider the construction for the infinite dimensional Hilbert space $\ell_2$ as it simplifies the proof; in fact, a similar argument works for $\R^{\Omega(T)}$.
Let us recall the construction (we follow the presentation in \cite[Theorem 3.15]{bubeck2014convex}).
Let $A : \ell_2 \rightarrow \ell_2$ be the linear operator that corresponds to the infinite tri-diagonal matrix with $2$ on the diagonal and $-1$ on the upper and lower diagonals. 
For some constant $\kappa\geq 1$, consider the following $\mu$-strongly convex cost:
\begin{align*}
f^{\sf Nes}(x) = \frac{\mu (\kappa-1)}{8} \left(\langle Ax, x\rangle - 2 \langle \ve_1, x \rangle \right) + \frac{\mu}{2} \norm{x}^2\quad \text{and} \quad q:= \frac{\sqrt{\kappa}-1}{\sqrt{\kappa}+1} .
\end{align*}
For the zero initialization  $x_0 = (0,0,\dots)\in \ell_2$, the cost satisfies the following properties (see  the proof of  \cite[Theorem 2.1.13]{nesterov2018lectures}): 
 \begin{itemize}
 \item Output of any \sfa{} satisfies $x_t[i] = 0, \forall i \geq t$.
    \item $x^*[i]=q^i$.
     \item $\norm{x_0 -x^*}^2 = \sum_{i=1}^\infty (x^*[i])^2 =\sum_{i=1}^\infty q^{2i} =\frac{q^2}{1-q^2}$.
     \item $\norm{x_t-x^*}^2 \geq \sum_{i=k+1}^\infty q^{2i} =\frac{q^{2(t+1)}}{1-q^2}=q^{2t} \norm{x_0-x^*}^2$.
 \end{itemize}
 Now we consider the following cost function: for $\vx = (x,y)\in \ell_2\times \ell_2$
 \begin{align*}
     f(\vx) = f^{\sf Nes}(x) +f^{\sf Nes}(y) \,,
 \end{align*}
 and we consider the two initializations:
 \begin{align*}
     \vx_0^{\sf ref} &= \Big((0,0,0,\dots),(q,q^2,q^3,\dots) \Big)\\
     \vx_0 &= \Big((0,0,0,\dots),(q,q^2,q^3,\dots,q^L,0,0,\dots) \Big)
 \end{align*}
 for $L= \Omega(\log(1/\delta))$ is chosen such that  $q^{L+1}/(1-q) \leq \delta$.
 Then, it follow that $\norm{\vx_0-\vx_0^{\sf ref}} = \frac{q^{L+1}}{1-q} \leq \delta$. 
 On the other hand, it follows from the above property that
  \begin{align*}
     \norm{\vx_t-\vx_t^{\sf ref}}^2 \geq q^t\frac{q^{2L+1}}{1-q^2}  \approx q^t \cdot\delta^2.
 \end{align*}
 Hence, as long as $\norm{\vx_t-\vx^*}^2,\norm{\vx_t^{\sf ref}-\vx^*}^2\lesssim \frac{\eps}{\mu}$, the deviation lower bound follows.
  \end{proof}

\section{Proof of lower bounds (nonsmooth costs)}
\label{sec:lb_nonsmooth}

In this section, we present the proofs of lower bounds for nonsmooth costs.
The proof will be based on more complicated constructions than those for the case of smooth costs, so before we dive into the proofs, we first present some intuition behind the constructions.

\subsection{Warm-up: lower bound against GD}
\label{sec:warmup_gd}
In this section, as a warm-up, we will prove a (weaker) lower bound for a simplified setting. 
In particular, we prove the lower bound against gradient descent (GD).
Formally, in the definition of \ref{exp:foi}, we restrict that $\la^{(t)}_i \equiv \la_i$ (i.e., the coefficient is a positive number does not depend on the iterations $t$). In other words, for $\la_i$, $i=0,1,\dots, T-1$,
\begin{align}\label{exp:gd} 
     \vx_t =  \vx_0 - \sum_{i=0}^{t-1} \la_i g(\vx_i)\quad\text{for each $t=1,2,\dots, T$}.
\end{align}
Note that this is precisely GD with step sizes $\la_t$'s.
For the lower bound construction, let  $\vx = (\errx, w) \in \R^{T}\times  \R$ and  consider the cost
 \begin{align*}
     f(\errx,w) = \underbrace{\max\left\{0,~\max_{i=1,\dots, T}\left\{ \errx[i]   \right\}\right\}}_{=:G(\errx )} +\underbrace{2 \eps\cdot \max\{w+1,0\}}_{=:\ell(w)}\,.
 \end{align*}
 Since the above cost function is nonsmooth, we specify the  subgradient oracle as follows: for  both max terms above, we consider the subgradient oracle that outputs the subgradient corresponding to the first argument that achieves the maximum.
 Note that $\inf_{\vx}f(\vx)=0$.
  Consider the zero initialization, i.e., $(\errx_0,w_0)=({\bm 0},0)$, and we write the iterates as $\vx_t :=(\errx_t, w_t) \in \R^T \times \R$. 
 
     For intuition, we describe the role of each coordinate:
\begin{list}{{\tiny $\blacksquare$}}{\leftmargin=1.5em}
\setlength{\itemsep}{-1pt}
         \item The first $T$ coordinates, $\errx\in \R^T$, correspond to the part where the errors due to inexact oracle are added. 
         \item The last coordinate, $w\in \R$,  governs the overall cost; in order to achieve $\eps$-accuracy, the optimization algorithm has to decrease  coordinate $w$ by at least $1/2$.
\end{list}

  The proof proceeds by considering two different scenarios:

\paragraph{Scenario 1 (exact gradients).} Consider the case where there is no noise in the gradients, i.e., $g(\vx_t) = \nabla f(\vx_t)$ for all $t$. 
Then, since $\errx_0= \bm{0}$,  it follows that  $\nabla G(\errx_t) =\bm{0}$ for all $t$. 
Hence the algorithm will only update coordinate $w_t$.
Note that $\ell (w_0)=2\eps$, and hence in order to achieve  $\ell(w_T) \leq \eps $, it must be that $w_T\leq -1/2$. 
 On the other hand, we have
 \begin{align*}
    \frac{\partial}{\partial w}\ell(w) = 0~~\text{or}~~2\eps\quad \text{for any}~~w\in\R.
 \end{align*}
Hence, in order to achieve $w_T\leq -1/2$, it must be that 
\begin{align}\label{cond:has_to_move}
    \sum_{t=0}^{T-1} \la_t\geq \Omega(1/\eps).
\end{align}
This condition is analogous to \eqref{cond:coeff} from the lower bound proof for smooth costs (\autoref{sec:foi_smooth}).

\paragraph{Scenario 2 (inexact gradients).}  Now let us consider the case  where the gradient error during the $t$-th iteration is non-stochastic and equal to $-\delta \ve_t$, i.e.,
\begin{align*}
    g(\vx_t) = \nabla f(\vx_t) -\delta \ve_{t+1}\quad \text{for all $t=0,1,\dots,T-1$}.
\end{align*} 
Here $\ve_t$ denotes the $t$-th coordinate vector. 
Let us assume that $\delta$ is much smaller than all the step sizes $\la_t$, in particular, such that $\la_i \delta \ll \la_{i+1}$ for all $i=0,\dots ,T-2$. 
Then from GD iterations defined as \eqref{exp:gd}, one can deduce that 
\begin{align*}
    \errx_t  = (-\la_1+\la_0 \delta ,\ -\la_2+\la_1\delta ,\ \cdots ,\ -\la_{t-1} + \la_{t-2}\delta,\ +\la_t\delta,0,\dots,0)\,.
\end{align*} 
Thus, the following estimate on the deviation holds:
 \begin{align}\label{cond:dev}
     \norm{\errx_T}^2 = \norm{\sum_{t=1}^{T-1}(\la_t -\la_{t-1}\delta) \ve_t + \la_{T-1}\delta \ve_{T}}^2 = \sum_{t=1}^{T -1}(\la_t-\la_{t-1}\delta)^2  \approx  \sum_{t=1}^{T -1}\la_t ^2\,.
 \end{align} 
 
\paragraph{Combining the two scenarios.}
Thus far, we have obtained   \eqref{cond:has_to_move} and \eqref{cond:dev}
from the two different scenarios.
The condition  \eqref{cond:has_to_move} shows that in order to achieve $\eps$-suboptimality, stepsizes have to add up to a large number, more precisely,  $\sum_{t=0}^{T-1} \la_t=\Omega(1/\eps)$.
On the other hand,   \eqref{cond:dev} characterizes that the deviation is on the order of the quantity $\sum_{t=1}^{T-1} \la_t^2$.
In order to formally connect these two conditions, we make the following assumption:
\begin{align}
    \la_0 \leq O\left( \sum_{t=1}^{T-1} \la_t\right)\,.
\end{align}
Then with this assumption, one obtain the following deviation bound:
 \begin{align*}
  \norm{\errx_T}^2 \approx \sum_{t=1}^{T-1} (\la^{(T)}_t)^2 \overset{(a)}{\geq} \frac{1}{T-1}\cdot  \left(\sum_{t=1}^{T-1}\la^{(T)}_t\right)^2\gtrsim \frac{1}{T-1}\cdot  \left(\sum_{t=0}^{T-1}\la^{(T)}_t\right)^2\gtrsim  \frac{1}{T\eps^2}\,,
 \end{align*}
 where $(a)$ is due to  the Cauchy-Schwartz inequality.
 This is precisely the desired  lower bound.
 
  For the lower bound against the entire class of \sfa{},  there are some other technical challenges arising from the fact that the coefficients $\la^{(t)}_i$'s not only  depend on $t$, but also could take negative values. 
  We need a more elaborate lower bound construction, as we explain in the subsequent subsections.

\subsection{Helper function}

Before actual proofs, we introduce a helper function that we will use throughout the proofs of \sfa{} lower bounds. 
Let 
$\component:\R\to \R$ is a non-smooth convex function defined as $\component(x) := \max\{x, 0\}$ and the subgradients are defined as
    \begin{align}\label{def:component}
    \begin{split}
        \nabla_{x} \component(x) &:= \begin{cases}
      +1, &\text{if }x\geq 0,\\
       0, &\text{if }x<y.
        \end{cases} 
    \end{split}
    \end{align}
The choice of subgradient $+1$ at the origin will play a crucial role in the later proofs.
For $\vx,\vy,\vz\in \R^T$ and $v\in\R$, let $\euG: (\vx,\vy,\vz)\in \R^{3T}\to \R$  be defined as
 \begin{align} \label{def:euG}
     \euG(\vx,\vy,\vz)&:=\max\{0 ,~\euK(\vx,\vy,\vz)\}\quad \text{and}
        \\
\euK(\vx,\vy,\vz) &:=\max_{i=1,\dots, T} \Big\{  \component(\vy[i])  + \sum_{j=1}^{i-1} \frac{ \left|\vx[j]\right|}{2^{j-1}} + \frac{\vx[i]}{2^{i-1}} , ~~ \component(\vz[i])  + \sum_{j=1}^{i-1} \frac{ \left|\vx[j]\right|}{2^{j-1}} - \frac{\vx[i]}{2^{i-1}}\Big\}\,. \label{def:euK}
    \end{align}
Then $\euG$ is clearly convex, as it is the maximum of convex functions.    

We specify the  subgradients of $\euG$ as follows: for all max terms in \eqref{def:nonsmooth_lower}, we get the subgradient of the first argument that achieves the maximum.
Then $\euG$ is $O(1)$-Lipschitz: for any $\vx,\vy,\vz \in \R^T$,
    \begin{align*}
        \norm{\nabla \euG(\vx,\vy,\vz) }^2 \leq 1+ \sum_{j=1}^{T} (\frac{1}{2^{j-1}})^2   \leq 1+ \sum_{j=1}^{\infty} \frac{1}{4^{j-1}} \leq 1+ 4/3\,.
    \end{align*}

\subsection{Stochastic inexact gradient model} 
\label{pf:lb_sto_nonsmooth}

 \begin{restatable}{theorem}{lbstononsmooth}{\bf (Lower Bound)}\label{thm:lb_sto_nonsmooth}
Let $\eps>0$  and $T$ be a given number of iterations.  
There exists a $O(1)$-Lipschitz (nonsmooth) convex function $f:\R^{O(T)}\to \R$ with a stochastic inexact gradient model such that any \sfa{} algorithm $\alg$   that satisfies $|\la_0^{(T)}| \leq O\left( 
\left|\sum_{t=1}^{T-1} \la^{(T)}_t\right|\right)$ and  starts at $\vx_0=\vzero$  has its $(\eps,\delta)$-deviation   lower bounded by $\Omega(\frac{1}{T\eps^2})$.
 \end{restatable}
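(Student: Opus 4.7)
My plan is to lift the warm-up construction of \autoref{sec:warmup_gd} to the full \sfa{} class by replacing the single max-trap with the helper $\euG$ on an auxiliary triple $(\vx,\vy,\vz)\in\R^{3T}$. I would take
\[
  f(\vx,\vy,\vz,w) \;:=\; \euG(\vx,\vy,\vz) \;+\; 2\epsilon\cdot \max\{w+1,0\},
\]
initialized at the origin, with the subgradient conventions already fixed in the preamble of \autoref{sec:lb_nonsmooth}. The stochastic oracle mirrors \eqref{def:lb_sto_first_oracle}: at iteration $t$ it outputs $\nabla f + \delta r_t\,\ve_{t+1}$, where $r_t\sim \unif\{\pm 1\}$ and $\ve_{t+1}$ is the $(t+1)$-th coordinate vector of the $\vx$-block, so that fresh Rademacher noise enters a previously untouched coordinate at each step.

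From here the proof runs in three steps that parallel the warm-up. First, since $f(\vzero)=2\epsilon$ and the $w$-subgradient lies in $[0,2\epsilon]$ along the entire trajectory, $\epsilon$-accuracy forces $w_T\leq -1/2$ and hence $|\sum_{t=0}^{T-1}\lambda_t^{(T)}|\gtrsim 1/\epsilon$; the hypothesis $|\lambda_0^{(T)}|\leq O(|\sum_{t\geq 1}\lambda_t^{(T)}|)$ upgrades this to $\sum_{t=1}^{T-1}|\lambda_t^{(T)}|\gtrsim 1/\epsilon$. Second, once the noise at step $t$ has pushed $\vx[t+1]$ off the origin, the $\euK$-subgradient at every later iterate contributes a $\pm 1/2^{t}$-magnitude term to $\vx[t+1]$ (this is the only way $\vx[t+1]$ enters $\euK$), so that later coefficients $\lambda_s^{(T)}$ accumulate into $\vx_T[t+1]$. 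The $+1$ convention for $\nabla\component(0)$ together with the $\pm \vx[i]/2^{i-1}$ splitting between the $\vy[i]$ and $\vz[i]$ branches of $\euK$ prevents the algorithm from exploiting $\vy,\vz$ to cancel this accumulation. Invoking orthogonality of the injection coordinates $\{\ve_{t+1}\}_t$ and a variance decomposition over the $r_t$'s then yields
\[
  \E\bigl\|\vx_T - \E\,\vx_T\bigr\|^2 \;\gtrsim\; \sum_{t=1}^{T-1} \E\bigl(\lambda_t^{(T)}\bigr)^2.
\]

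Third, Cauchy--Schwarz combined with $\E\|\vx_T-\vx_T'\|^2 = 2\var(\vx_T)$ for two independent runs gives
\[
  \E\|\vx_T-\vx_T'\|^2 \;\gtrsim\; \tfrac{1}{T}\,\E\Bigl(\sum_{t=1}^{T-1}|\lambda_t^{(T)}|\Bigr)^2 \;\gtrsim\; \tfrac{1}{T\epsilon^2},
\]
as claimed. The genuinely delicate point is the bookkeeping inside Step~2: one must verify that no adaptive choice of coefficients by the \sfa{} can use the $\vy,\vz$-block to flip the sign of the accumulated $\euK$-subgradient contribution in $\vx$. This is precisely what the pairing of $+\vx[i]/2^{i-1}$ with $\vy[i]$, $-\vx[i]/2^{i-1}$ with $\vz[i]$, and the geometric $2^{-(i-1)}$ weighting are designed to enforce, while the hypothesis on $\lambda_0^{(T)}$ is there to discard the one boundary term $t=0$ where the amplification mechanism has no prior history to leverage. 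Once this combinatorial check is in place, the remaining estimates are routine.
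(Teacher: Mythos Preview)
Your construction, oracle, Step~1, and Step~3 all match the paper. The gap is in Step~2: you put the deviation in the wrong block. You track $\E\|\vx_T-\E\vx_T\|^2$ in the $\vx$-coordinates and claim it is $\gtrsim \sum_{t\ge 1}(\lambda_t^{(T)})^2$, but the mechanism you describe does not deliver this. The subgradient of $\euK$ with respect to $\vx[t+1]$ has magnitude $1/2^{t}$, so ``later coefficients accumulating into $\vx_T[t+1]$'' carry a geometric damping factor; and the only undamped contribution to that coordinate is the direct noise term $-\lambda_t^{(T)}\delta r_t$, which is $\delta$-scale. Neither piece produces a $\delta$-independent per-coordinate contribution of size $\lambda_t^{(T)}$, so your displayed inequality is unsupported.

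The paper instead puts the deviation in the $\vy$-block (and symmetrically $\vz$). The point of the $\pm\vx[i]/2^{i-1}$ splitting is not to prevent cancellation in $\vx$, but to make the Rademacher bit $r_{t-1}$ decide \emph{which branch of $\euK$ achieves the max} at the next step: with probability $1/2$ one gets $\partial_{\vy[i_t]}\euG=1$, and with probability $1/2$ one gets $\partial_{\vz[i_t]}\euG=1$ (this is the paper's \autoref{lem:pattern}). These are magnitude-$1$ subgradients, so $\vy_T[i]$ equals $\sum_{t:\,i_t=i}\lambda_t^{(T)}$ or $0$, each with probability $1/2$. Summing the resulting Bernoulli variances over $i$ and applying Cauchy--Schwarz across the index set $\{i_t\}$ gives
\[
\E\norm{\vy_T-\E\vy_T}^2 \;\ge\; \tfrac{1}{4}\sum_i\Bigl(\sum_{t:\,i_t=i}\lambda_t^{(T)}\Bigr)^2 \;\ge\; \tfrac{1}{4T}\Bigl(\sum_{t=1}^{T-1}\lambda_t^{(T)}\Bigr)^2 \;\gtrsim\; \tfrac{1}{T\eps^2}.
\]
So the role you assign to the $(\vy,\vz)$-block (``preventing cancellation'') is really the source of the deviation; your Step~2 should be rewritten to track $\vy_T$ via the branching lemma rather than $\vx_T$.
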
 
 
\begin{proof}
For $\vx=(\errx,\vy,\vz,w)$ where $\errx,\vy,\vz \in\R^T$ and $w\in \R$, consider the cost
    \begin{align}\label{def:nonsmooth_lower}
     f(\errx,\vy,\vz,w) = \euG(\errx,\vy,\vz) +\underbrace{2\eps\cdot \max\{  w+1,0 \}}_{=:\ell(w)}\,,
    \end{align}
where $\euG$ is defined  in \eqref{def:euG}.  
Then, $f$ is convex since both $\euG$ and $\ell$ are convex, and $O(1)$-Lipschitz since both $\euG$ and $\ell$ are $O(1)$-Lipschitz. 

    For intuition, we describe the role of each coordinate as we did in the warm-up section (\autoref{sec:warmup_gd}):
\begin{list}{{\tiny $\blacksquare$}}{\leftmargin=1.5em}
\setlength{\itemsep}{-1pt}
         \item The first $T$ coordinates, $\errx\in \R^T$, correspond to the part where the errors due to inexact oracle are added.
         \item The next $2T$ coordinates, $\vy,\vz\in \R^T$ will contribute to large deviation when there are errors in the gradients.
         \item The last coordinate, $w\in \R$,  governs the overall cost; in order to achieve $\eps$-accuracy, the optimization algorithm has to decrease  coordinate $w$ by at least $1/2$.
\end{list}

We use the following notation throughout the proof: $\vx_t=(\errx_t,\vy_t,\vz_t,w_t)\in  (\R^T)^3\times \R$.
Consider the zero initialization $\vx_0=({\bm 0},{\bm 0},{\bm 0}, 0)$ and the  following inexact gradient error for $t=0,1,2,\dots, T-1$: 
\begin{align} \label{exp:inexact}
     g(\vx_t) = 
     \nabla f(\vx_t) + \delta  r_t \cdot  \ve_{1+t}\quad \text{for $r_t\overset{iid}{\sim}   \unif\{\pm 1\}$},
\end{align} 
where $\ve_j$ is the $j$-th coordinate vector.
The following lemma characterizes the key feature of the above construction.
\begin{remark}
Note that \autoref{lem:pattern} is the place where we use the following additional assumption that we made for the case of nonsmooth costs:
``\emph{for the case of nonsmooth costs, we additionally assume that the coefficient of the latest gradient is nonzero, i.e., $\la^{(t)}_{t-1}\neq 0$ for all $t$.}''
\end{remark}
 
\begin{lemma} \label{lem:pattern}
   Under the inexact gradient \eqref{exp:inexact}, the subgradient $\nabla \euG$ has the following properties: \vspace{-10pt}
\begin{list}{{\tiny $\blacksquare$}}{\leftmargin=1.5em}
\setlength{\itemsep}{-1pt}
\item  For each $t=1,2,\dots, T-1$, there exists $i_t \in \{1,\dots, t\}$ such that the following holds: 
\begin{align} \label{exp:pattern}
\begin{cases}
     \frac{\partial}{\partial \vy[i_t]} \euG(\errx_t,\vy_t,\vz_t) =1,~~ \frac{\partial}{\partial \vz[i_t]} \euG(\errx_t,\vy_t,\vz_t) = 0   & \text{with probability  } $1/2$, \\
       \frac{\partial}{\partial \vy[i_t]}  \euG(\errx_t,\vy_t,\vz_t) =0, ~~\frac{\partial}{\partial \vz[i_t]} \euG(\errx_t,\vy_t,\vz_t) = 1  & \text{with probability  } $1/2$.
    \end{cases}
\end{align}
Moreover, for $i\neq i_t$, $\frac{\partial}{\partial \vy[i]} \euG(\errx_t,\vy_t,\vz_t)=\frac{\partial}{\partial \vz[i]} \euG(\errx_t,\vy_t,\vz_t)=0$.
\item If $i_t \neq t$ (i.e., $i_t<t$), then $i_t=i_{t'}$ for some $t'<t$, and it holds that $ \frac{\partial}{\partial \vy[i_t]} \euG(\errx_t,\vy_t,\vz_t) = \frac{\partial}{\partial \vy[i_{t'}]} \euG(\errx_{t'},\vy_{t'},\vz_{t'}) $ and $ \frac{\partial}{\partial \vz[i_t]} \euG(\errx_t,\vy_t,\vz_t) = \frac{\partial}{\partial \vz[i_{t'}]} \euG(\errx_{t'},\vy_{t'},\vz_{t'})$.
\end{list}
\end{lemma}
\begin{proof}[Proof of \autoref{lem:pattern}] 
Let us recall  the definition of $\euG(\vx,\vy,\vz)$:
    \begin{align}  \label{recall:euG}
    \max\left\{0~~,\max_{i=1,\dots, T} \Big\{  \component(\vy[i])  + \sum_{j=1}^{i-1} \frac{ \left|\vx[j]\right|}{2^{j-1}} + \frac{\vx[i]}{2^{i-1}} , ~~ \component(\vz[i])  + \sum_{j=1}^{i-1} \frac{ \left|\vx[j]\right|}{2^{j-1}} - \frac{\vx[i]}{2^{i-1}}\Big\}\right\}\,.
    \end{align}
    From this, it is clear that there must be at most one $i\in\{1,\dots,T\}$ for which either  $\frac{\partial}{\partial \vy[i]} \euG(\errx_t,\vy_t,\vz_t)\neq 0$ or $\frac{\partial}{\partial \vz[i]} \euG(\errx_t,\vy_t,\vz_t)\neq 0$.
    This proves the ``\emph{Moreover, for $i\neq i_t$, $\frac{\partial}{\partial \vy[i]} \euG(\errx_t,\vy_t,\vz_t)=\frac{\partial}{\partial \vz[i]} \euG(\errx_t,\vy_t,\vz_t)=0$}'' part of the first bullet point.
    
Next, we prove the expression \eqref{exp:pattern}. We begin with $t=1$. Since $g(\vx_0) = \nabla f(\vx_0) + \delta  r_0 \cdot  \ve_{1}$ and $\lambda^{(1)}_0\neq 0$, it follows that $\errx_1[1]\neq 0$.
Then, we claim that  the first bullet point holds for $t=1$.
Since we know $\errx_1[1]\neq 0$ and $\errx_1[2],\dots, \errx_1[T] = 0$,  for $\vx_1$, the maximum in \eqref{recall:euG} is achieved by $i=1$.
This implies that $i_1=1$.
    Moreover, depending on the sign of $r_0$, we either have $\frac{\partial}{\partial \vy[1]}  \euG(\errx_1,\vy_1,\vz_1)=1$ or $\frac{\partial}{\partial \vz[1]}  \euG(\errx_1,\vy_1,\vz_1)=1$ with equal probability.
    Thus, \eqref{exp:pattern} holds for $t=1$.

   Next, consider $t>1$. 
   Since $g(\vx_{t-1}) = \nabla f(\vx_{t-1}) + \delta  r_{t-1} \cdot  \ve_{t}$ and $\lambda^{(t)}_{t-1}\neq 0$, it follows that $\errx_t[t]\neq 0$.
  Moreover, we know $\errx_t[t+1],\dots, \errx_t[T] = 0$.
  Hence, we have the following two scenarios:
  \begin{list}{{\tiny $\bullet$}}{\leftmargin=1.5em}
\setlength{\itemsep}{-1pt}
\item {\bf Case 1:} $\vy[i],\vz[i]\leq 0$ for all $i\in\{1,2,\dots, t-1\}$. Note that this hold---for instance---if  the coefficients FOI are all non-negative, i.e., $\la^{(t)}_i \geq 0$ of for all $i\in\{1,2,\dots, t-1\}$ (most first order optimization algorithms usually follow this). In that case, the maximum in \eqref{recall:euG} is achieved by $i=t$
This implies that $i_t = t$.  Moreover, depending on the sign of $r_{t-1}$, we either have $\frac{\partial}{\partial \vy[t]}  \euG(\errx_t,\vy_t,\vz_t)=1$ or $\frac{\partial}{\partial \vz[t]}  \euG(\errx_t,\vy_t,\vz_t)=1$ with equal probability.
    Thus, again \eqref{exp:pattern} holds for $t$.

\item {\bf Case 2:} Somehow FOI chooses to follow positive gradient directions (which is unlikely in practice) and it happens that $\vy[i]>0$ or $\vz[i]> 0$ for some $i\in\{1,2,\dots, t-1\}$. 
In such a case, the maximum in \eqref{recall:euG} could be achieved by $i\in\{1,2,\dots, t-1\}$, i.e., $i_t \in \{1,2,\dots, t-1\}$.
Then it must be that $\vy[i_t]>0$ or $\vz[i_t]>0$. This can happen only if $i_t = i_{t'}$ for some $t'<t$.
Hence, it follows that $\frac{\partial}{\partial \vy[i_t]} \euG(\errx_t,\vy_t,\vz_t) = \frac{\partial}{\partial \vy[i_{t'}]} \euG(\errx_{t'},\vy_{t'},\vz_{t'}) $ and $\frac{\partial}{\partial \vz[i_t]} \euG(\errx_t,\vy_t,\vz_t) = \frac{\partial}{\partial \vz[i_{t'}]} \euG(\errx_{t'},\vy_{t'},\vz_{t'})$.
 This proves the second bullet point in the statement.
In particular, \eqref{exp:pattern} holds for $t$. 

\end{list} 
This completes the proof of \autoref{lem:pattern}.
\end{proof}

Now we use \autoref{lem:pattern} to prove  \autoref{thm:lb_sto_nonsmooth}.
From the construction \eqref{def:nonsmooth_lower}, we know that $f(\vx_0) = 2\eps$.
In order to achieve $\eps$-accuracy, we need $w_T\leq -1/2$.
Note that $\frac{\partial}{\partial w}\ell(w_t) =  2\eps$ for all $t\geq 0$.
From the fact that $w_T\leq -\frac{1}{2}$, it follows that
\begin{align*} 
\sum_{t=0}^{T-1} \la^{(T)}_t \frac{\partial}{\partial w}\ell(v_t,w_t) \geq \frac{1}{2} \quad \Longleftrightarrow \quad    \sum_{t=0 }^{T-1} \la^{(T)}_t \geq \frac{1}{4\eps}\,.
\end{align*}
Now using the assumption that $|\la_0^{(T)}| \leq O\left( 
\left|\sum_{t=1}^{T-1} \la^{(T)}_t\right|\right)$, we obtain
\begin{align*}
    \left|\sum_{t=1}^{T-1} \la^{(T)}_t\right| \gtrsim    |\la^{(T)}_0| +  \left|\sum_{t=1}^{T-1} \la^{(T)}_t\right| \geq \left|\sum_{t=0}^{T-1} \la^{(T)}_t\right| \geq \frac{1}{4\eps}\,,
\end{align*}
which leads to the following conditon:
\begin{align}
    \label{large_coeff_nonsmooth}
 \left|\sum_{t=1}^{T-1} \la^{(T)}_t\right|\geq \Omega\left(\frac{1}{\eps}\right)
\end{align}
  This condition is analogous to \eqref{cond:has_to_move} from \autoref{sec:warmup_gd}.
  Now to better illustrate our proof strategy for the remaining part, we first consider a special case.
  
\paragraph{Warm-up: proof for the special case.} 
 As a warm-up, we first consider the special case where in the definition of \ref{exp:foi}, all the coefficients $\la^{(t)}_i$ are non-negative, i.e., 
\begin{align} 
     \vx_t =  \vx_0 - \sum_{i=0}^{t-1} \la^{(t)}_i g(\vx_i)\quad \text{for some }\lambda_i^{(t)} \geq 0,~ i=0,\dots, t-1,
\end{align}  
Then this case belongs to {\bf Case 1} in the proof of \autoref{lem:pattern}. As a consequence, $i_t=t$ in  \autoref{lem:pattern} and the following conclusion holds:
\begin{list}{{\tiny $\blacksquare$}}{\leftmargin=1.5em}
\setlength{\itemsep}{-1pt}
\item  For each $t=1,2,\dots, T-1$, 
\begin{align} \label{exp:pattern_special}
\begin{cases}
     \frac{\partial}{\partial \vy[t]} \euG(\errx_t,\vy_t,\vz_t) =1,~~ \frac{\partial}{\partial \vz[t]} \euG(\errx_t,\vy_t,\vz_t) = 0   & \text{with probability  } $1/2$, \\
       \frac{\partial}{\partial \vy[t]}  \euG(\errx_t,\vy_t,\vz_t) =0, ~~\frac{\partial}{\partial \vz[t]} \euG(\errx_t,\vy_t,\vz_t) = 1  & \text{with probability  } $1/2$.
    \end{cases}
\end{align}
Moreover, for $i\neq t$, $\frac{\partial}{\partial \vy[i]} \euG(\errx_t,\vy_t,\vz_t)=\frac{\partial}{\partial \vz[i]} \euG(\errx_t,\vy_t,\vz_t)=0$. 
\end{list}

We use \eqref{exp:pattern_special} to lower bound $\E\norm{\vx_T -\E[\vx_T]}^2$. From  \eqref{exp:pattern_special}, it  holds that for $t=1,2,\dots, T$,
\begin{align*}
   \vy_T[t] = \begin{cases}
   \lambda^{(T)}_{t}, & \text{with probability}~$1/2$,\\
    0,& \text{with probability}~$1/2$\,.
    \end{cases}
\end{align*}
  Hence, we have 
\begin{align*}
    \E\norm{\vy_T - \E[\vy_T]}^2 &\geq  \sum_{t=1}^{T-1} \E\left( \vy_T[t] -\E \vy_T[t]\right)^2  = \sum_{t=1}^{T-1} \E\left( \vy_T[t] - \frac{1}{2}\lambda^{(T)}_{t}\right)^2 = \frac{1}{4}\sum_{t=1}^{T-1}  (\lambda^{(T)}_{t})^2\\
      & \overset{(a)}{\geq}  \frac{1}{4} \cdot \frac{1}{T-1}\cdot \left( \sum_{t=1}^{T-1}  \lambda^{(T)}_{t} \right)^2 \overset{\eqref{large_coeff_nonsmooth}}{\gtrsim} \frac{1}{4} \cdot \frac{1}{T-1}\cdot \left( \frac{1}{\eps} \right)^2 \gtrsim \frac{1}{T\eps^2}.
\end{align*} 
Here $(a)$ follows from Cauchy-Schwarz inequality. 
Therefore, we get the desired deviation bound as follows:
    \begin{align*}
    \E\norm{\vx_T - \E[\vx_T]}^2 &\geq  \E\norm{\vy_T - \E[\vy_T]}^2 \gtrsim \frac{1}{T\eps^2}.
    \end{align*}

\paragraph{The proof for the general case.}
Now we consider the case of general \sfa{} where the coefficients $\la^{(t)}_i$ are not necessarily non-negative.
With $i_t$ defined in the statement of  \autoref{lem:pattern},
let 
\begin{align*}
    \Iw := \left\{i_t\in\{1,2,\dots,T\}~:~ t=1,2,\dots, T-1 \right\}\,.
\end{align*}
Then  \autoref{lem:pattern} ensures that that for each $i\in \Iw$,
\begin{align*}
   \vy_T[i] = \begin{cases}
   \sum_{\substack{t=1,2,\dots, T-1\\~\text{s.t.}~i_t=i}}\lambda^{(T)}_{t}, & \text{with probability}~$1/2$,\\
    0,& \text{with probability}~$1/2$\,.
    \end{cases}
\end{align*}
  Hence, we have 
\begin{align*}
    \E\norm{\vy_T - \E[\vy_T]}^2 &\geq  \sum_{t=1}^{T-1} \E\left( \vy_T[t] -\E \vy_T[t]\right)^2  =  \frac{1}{4}\sum_{i\in\Iw}  \left(\sum_{\substack{t=1,2,\dots, T-1\\~\text{s.t.}~i_t=i}}\lambda^{(T)}_{t}\right)^2\\
    & \overset{(a)}{\geq}  \frac{1}{4} \cdot \frac{1}{|\Iw|}\cdot \left(\sum_{i\in\Iw}  \sum_{\substack{t=1,2,\dots, T-1\\~\text{s.t.}~i_t=i}}\lambda^{(T)}_{t}\right)^2  = \frac{1}{4} \cdot \frac{1}{|\Iw|}\cdot \left(\sum_{t=1}^{T-1} \lambda^{(T)}_{t}\right)^2 \overset{\eqref{large_coeff_nonsmooth}}{\gtrsim}  \frac{1}{T\eps^2}.
\end{align*} 
Here $(a)$ follows from Cauchy-Schwarz inequality. 
Therefore, we get the desired deviation bound as follows:
    \begin{align*}
    \E\norm{\vx_T - \E[\vx_T]}^2  \geq  \E\norm{\vy_T - \E[\vy_T]}^2   \gtrsim \frac{1}{T\eps^2}.
    \end{align*} 

\end{proof} 

\subsection{Non-stochastic inexact gradient model} \label{pf:lb_det_nonsmooth}

 \begin{restatable}{theorem}{lbdetnonsmooth}\label{thm:lb_det_nonsmooth}{\bf (Lower Bound)}
Let $\eps>0$, and $T$ be a given number of iterations. There exists a $O(1)$-Lipschitz  and nonsmooth convex function $f:\R^{O(T)}\to \R$ with  a non-stochastic inexact gradient oracle such that for any \sfa{} algorithm $\alg$ that satisfies $|\la_0^{(T)}| \leq O\left( 
\left|\sum_{t=1}^{T-1} \la^{(T)}_t\right|\right)$  and starts at $\vx_0=\vzero$ has a minimum of  $(\eps,\delta)$-deviation of  $\Omega(\frac{1}{T\eps^2} +\frac{\delta^2}{\eps^2})$.
 \end{restatable}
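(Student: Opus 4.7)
My plan is to split the target $\Omega(\tfrac{1}{T\eps^2}+\tfrac{\delta^2}{\eps^2})$ into its two summands and realize each by an adversarial strategy on top of (an enlargement of) the construction from \autoref{thm:lb_sto_nonsmooth}. For the $\Omega(\tfrac{1}{T\eps^2})$ contribution I would simply transfer \autoref{thm:lb_sto_nonsmooth} verbatim: the stochastic oracle $g(\vx_t)=\nabla f(\vx_t)+\delta r_t\ve_{1+t}$ with $r_t\in\{\pm1\}$ always has error of norm exactly $\delta$, so it is a bona fide $\delta$-bounded non-stochastic oracle (\autoref{def:inexactgrad}(b) allows non-deterministic $g$). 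Because the non-stochastic $(\eps,\delta)$-deviation is a supremum over two runs and the stochastic proof already yields $\E\|\vx_T-\E\vx_T\|^2\gtrsim\tfrac{1}{T\eps^2}$ under the identical FOI restriction $|\lambda_0^{(T)}|\le O(|\sum_{t=1}^{T-1}\lambda_t^{(T)}|)$, there must exist two deterministic sign sequences $(r_t),(r_t')$ whose induced runs realize $\|\vx_T-\vx_T'\|^2\gtrsim\tfrac{1}{T\eps^2}$; the adversary hard-codes them into the two runs.

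For the $\Omega(\tfrac{\delta^2}{\eps^2})$ contribution I would use a construction in the spirit of \autoref{thm:lb_det_smooth}, adapted to the nonsmooth setting. Take $\ell(w)=2\eps\max\{w+1,0\}$ on $\R\times\R$ with zero initialization and auxiliary coordinate $u$ (not appearing in $\ell$), and define in Run~1 the oracle $g(\vx_t)=\nabla\ell(\vx_t)+\delta\cdot\tfrac{\partial_w\ell(w_t)}{2\eps}\ve_u$, and in Run~2 the same with the sign on $\ve_u$ flipped. Since $\partial_w\ell\in[0,2\eps]$, this has error norm at most $\delta$; since the error lies orthogonal to $w$, the $w$-trajectories in the two runs coincide, so both are $\eps$-accurate under the same condition $w_T\le-\tfrac12$, equivalently $\sum_t\lambda_t^{(T)}\partial_w\ell(w_t)\ge\tfrac12$. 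This forces $u_T=-\tfrac{\delta}{2\eps}\sum_t\lambda_t^{(T)}\partial_w\ell(w_t)\le-\tfrac{\delta}{4\eps}$ and symmetrically $u_T'\ge\tfrac{\delta}{4\eps}$, yielding $|u_T-u_T'|^2\ge\tfrac{\delta^2}{4\eps^2}$.

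To merge the two constructions into the single cost/oracle pair demanded by the theorem, I would take the direct sum $\vx=(\errx,\vy,\vz,w,u)\in\R^T\times\R^T\times\R^T\times\R\times\R$ with $f(\vx)=\euG(\errx,\vy,\vz)+2\eps\max\{w+1,0\}$, and use the non-stochastic oracle
\begin{equation*}
g(\vx_t)=\nabla f(\vx_t)+\tfrac{\delta}{\sqrt 2}\,r_t\,\ve_{1+t}\;\pm\;\tfrac{\delta}{\sqrt 2}\cdot\tfrac{\partial_w\ell(w_t)}{2\eps}\,\ve_u,
\end{equation*}
with sign $+$ in Run~1, sign $-$ in Run~2, and the sequences $(r_t),(r_t')$ fixed as in the first step. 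The two perturbation terms live in orthogonal coordinate blocks, so the total error has norm at most $\delta$, and the $(\errx,\vy,\vz)$ and $u$ dynamics decouple, so the two contributions to $\|\vx_T-\vx_T'\|^2$ add. The main obstacle will be verifying that \autoref{thm:lb_sto_nonsmooth}'s nonsmooth subgradient-switching argument (\autoref{lem:pattern}) is insensitive to scaling the Rademacher amplitude down by $\tfrac{1}{\sqrt 2}$; fortunately it is, because the argument only requires the perturbation on $\errx[1+t]$ to be \emph{nonzero} (which alone selects the active branch of the inner max in $\euG$), so any positive amplitude triggers the same deviation pattern up to constants.
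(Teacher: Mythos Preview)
Your construction is essentially the same as the paper's: augment the nonsmooth cost $f(\errx,\vy,\vz,w)=\euG(\errx,\vy,\vz)+2\eps\max\{w+1,0\}$ from \autoref{thm:lb_sto_nonsmooth} by a dummy coordinate $u$ carrying a $\delta$-scaled perturbation, and split the error budget as $\tfrac{\delta}{\sqrt 2}$ on $\ve_{1+t}$ and $\tfrac{\delta}{\sqrt 2}$ on $\ve_u$. The difference is in which two runs are compared. The paper takes the much simpler route: Run~1 uses the \emph{deterministic} perturbation $g(\vx_t)=\nabla f(\vx_t)+\tfrac{\delta}{\sqrt 2}\ve_{1+t}+\tfrac{\delta}{\sqrt 2}\ve_{3T+2}$ (no Rademacher signs, no scaling by $\partial_w\ell$), and Run~2 uses the \emph{exact} oracle. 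Since the exact run trivially has $\vy_T^{\sf exact}=\vz_T^{\sf exact}=u_T^{\sf exact}=0$, both contributions are read off directly from Run~1 alone: the analogue of \autoref{lem:pattern} (with a fixed sign in place of $r_t$) forces $\|\vy_T\|^2+\|\vz_T\|^2\gtrsim\tfrac{1}{T\eps^2}$, and $|u_T|=\tfrac{\delta}{\sqrt 2}\big|\sum_t\lambda_t^{(T)}\big|\gtrsim\tfrac{\delta}{\eps}$. There is no need to invoke the stochastic theorem or pick two sign sequences.

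Your merging step has a gap for adaptive \sfa{}. You fix $(r_t),(r_t')$ by the existence argument on the \emph{$u$-less} construction, then graft on the $\pm\ve_u$ perturbation. But the FOI coefficients $\lambda_i^{(t)}$ may be chosen adaptively based on \emph{all} observed gradients, including the $u$-component; once the algorithm sees the sign $s=\pm1$ on $\ve_u$, it is free to switch strategies, so its $(\errx,\vy,\vz)$-trajectory in the combined construction need not match the one from the $u$-less construction under the same $(r_t)$. Thus the $(\vy,\vz)$-deviation guarantee for the pair $(r_t),(r_t')$ does not automatically transfer, and your claim that ``the $(\errx,\vy,\vz)$ and $u$ dynamics decouple'' conflates decoupling of gradient components with decoupling of the (shared, adaptive) coefficients. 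The gap is easy to close---just compare any single inexact run to the exact run, as the paper does---but as written the proposal does not handle it.
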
 

\begin{proof}
We consider an almost identical construction to the one considered in the proof of \autoref{thm:lb_sto_nonsmooth}, namely \eqref{def:nonsmooth_lower}.
The only difference is that  now we add an extra dummy coordinate, namely the $(3T+2)$-th coordinate, which does not appear in the cost. Let us denote this dummy coordinate by $\dum$.
Concretely, we consider the following cost: For $\vx=(\errx,\vy,\vz,w,\dum)$ where $\errx,\vy,\vz \in\R^T$ and $w,u\in \R$, consider the cost
    \begin{align}\label{def:nonsmooth_nonsto}
     f(\errx,\vy,\vz, w,\dum) = \euG(\errx,\vy,\vz) +\underbrace{2\eps\cdot \max\{ w+1,0 \}}_{=:\ell(w)}\,.
    \end{align}

We denote the iterates of FOI due to \emph{exact} gradients
 by $$\vx_t^{\sf exact}=((\errx_t)^{\sf exact},\vy_t^{\sf exact},\vz_t^{\sf exact}, w_t^{\sf exact},\dum_t^{\sf exact})\,.$$ 

Now we define the inexact gradient oracle. The noise in the inexact gradient oracle consists of two parts. For $t=0,1,2,\dots, T-1$,
\begin{align} \label{def:det_noise_nonsmooth}
        g(\vx_t) =\nabla f(\vx_t) + \frac{\delta}{\sqrt{2}}   \ve_{1+t} +  \frac{\delta}{\sqrt{2}}\ve_{3T+2} \,,
    \end{align}
    where $\ve_j$ is the $j$-th coordinate vector.
    Note that the first part of the error is similar to the error for the stochastic error case, and  the second part of the error is added to the dummy coordinate $\dum$.

Then analogous to \autoref{lem:pattern}, one can establish the following result.
We skip the proof since it is  very analogous to that of \autoref{lem:pattern}.
\begin{corollary} \label{cor:pattern_nonsto}
   Under the inexact gradient \eqref{exp:inexact}, the subgradient $\nabla \euG$ has the following properties: \vspace{-10pt}
\begin{list}{{\tiny $\blacksquare$}}{\leftmargin=1.5em}
\setlength{\itemsep}{-1pt}
\item  For each $t=1,2,\dots, T-1$, there exists $i_t \in \{1,\dots, t\}$ such that either one of the following holds:
\begin{align*} 
\begin{cases}
     \frac{\partial}{\partial \vy[i_t]} \euG(\errx_t,\vy_t,\vz_t) =1,~~ \frac{\partial}{\partial \vz[i_t]} \euG(\errx_t,\vy_t,\vz_t) = 0, \quad   \text{or} \\
       \frac{\partial}{\partial \vy[i_t]}  \euG(\errx_t,\vy_t,\vz_t) =0, ~~\frac{\partial}{\partial \vz[i_t]} \euG(\errx_t,\vy_t,\vz_t) = 1.  
    \end{cases}
\end{align*}
Moreover, for $i\neq i_t$, $\frac{\partial}{\partial \vy[i]} \euG(\errx_t,\vy_t,\vz_t)=\frac{\partial}{\partial \vz[i]} \euG(\errx_t,\vy_t,\vz_t)=0$.
\item If $i_t \neq t$ (i.e., $i_t<t$), then $i_t=i_{t'}$ for some $t'<t$, and it holds that $ \frac{\partial}{\partial \vy[i_t]} \euG(\errx_t,\vy_t,\vz_t) = \frac{\partial}{\partial \vy[i_{t'}]} \euG(\errx_{t'},\vy_{t'},\vz_{t'}) $ and $ \frac{\partial}{\partial \vz[i_t]} \euG(\errx_t,\vy_t,\vz_t) = \frac{\partial}{\partial \vz[i_{t'}]} \euG(\errx_{t'},\vy_{t'},\vz_{t'})$.
\end{list}
\end{corollary}
    
The rest of the proof is similar to that of \autoref{thm:lb_sto_nonsmooth}. 
From the construction \eqref{def:nonsmooth_nonsto}, we know that $f(\vx_0) = 2\eps$.
In order to achieve $\eps$-accuracy, one can similarly deduce that \eqref{large_coeff_nonsmooth} holds.

With $i_t$ defined in the statement of  \autoref{cor:pattern_nonsto},
let 
\begin{align*}
    \Iw := \left\{i_t\in\{1,2,\dots,T\}~:~ t\in \Tw \right\}\,.
\end{align*}  
Then one can similarly argue using \autoref{cor:pattern_nonsto} that for each $i\in \Iw$, either 
\begin{align*}
   \vy_T[i] = \sum_{\substack{t=1,2,\dots, T-1\\~\text{s.t.}~i_t=i}}\lambda^{(T)}_{t} \quad \text{or}\quad \vz_T[i] = \sum_{\substack{t=1,2,\dots, T-1\\~\text{s.t.}~i_t=i}}\lambda^{(T)}_{t}\,.
\end{align*}
This, together with the fact that $\vy_T^{\sf exact}={\bm 0}$ and $\vz_T^{\sf exact}={\bm 0}$, implies that
\begin{align*}
    &\norm{\vy_T -  \vy_T^{\sf exact}}^2+ \norm{\vz_T -  \vz_T^{\sf exact}}^2 \geq \sum_{i\in\Iw}  \left(\sum_{\substack{t=1,2,\dots, T-1\\~\text{s.t.}~i_t=i}}\lambda^{(T)}_{t}\right)^2\\
    &\quad \overset{(a)}{\geq}    \frac{1}{|\Iw|}\cdot \left(\sum_{i\in\Iw}  \sum_{\substack{t=1,2,\dots, T-1\\~\text{s.t.}~i_t=i}}\lambda^{(T)}_{t}\right)^2  = \frac{1}{4} \cdot \frac{1}{|\Iw|}\cdot \left(\sum_{t=1}^{T-1} \lambda^{(T)}_{t}\right)^2 \overset{\eqref{large_coeff_nonsmooth}}{\gtrsim}  \frac{1}{T\eps^2}.
\end{align*}
On the other hand, since $\dum_T^{\sf exact}=0$,  the deviation in the dummy coordinate can be lower bounded as follows:
\begin{align*}
    |\dum_T - \dum_T^{\sf exact}|^2 = \left(\frac{\delta}{\sqrt{2}}\cdot \sum_{t=0}^{T-1} \la^{(T)}_t \right) ^2 \gtrsim \frac{\delta^2}{\eps^2}\,.
\end{align*}
Therefore, combining all together, we get
 \begin{align*}
     \norm{\vx_T -  \vx_T^{\sf exact}}^2 &\geq  \norm{\vy_T -  \vy_T^{\sf exact}}^2+ \norm{\vz_T -  \vz_T^{\sf exact}}^2+ \norm{\dum_T - \dum_T^{\sf exact}}^2\\
     &\gtrsim \frac{1}{T\eps^2} +\frac{\delta^2}{\eps^2}\,,
\end{align*}
as desired.
\end{proof}

\subsection{Inexact initialization model}
\label{pf:lb_init_nonsmooth}

\begin{restatable}{theorem}{lbinitnonsmooth}{\bf (Lower Bound)} \label{thm:lb_init_nonsmooth}
Let $\eps>0$ be a small constant, and $T$ be a given number of iterations. There exists a $O(1)$-Lipschitz (nonsmooth) convex function $f:\R^{O(T)}\to \R$  such that for any \sfa{} algorithm $\alg$   the $(\eps,\delta)$-deviation lower bounded by $\Omega(\frac{1}{T\eps^2}+\delta^2)$ w.r.t. the reference point  $\vx_0^{\sf ref}=\vzero$.
 \end{restatable}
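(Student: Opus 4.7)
\bigskip

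\noindent\textbf{Proof plan for \autoref{thm:lb_init_nonsmooth}.} The strategy is to establish the two additive terms $\Omega(\nicefrac{1}{T\eps^2})$ and $\Omega(\delta^2)$ by separate constructions, since the overall lower bound is just their maximum up to constants. The $\delta^2$ term follows by the same dummy-coordinate trick already used in \autoref{thm:lb_init_smooth}; the $\nicefrac{1}{T\eps^2}$ term will require a more delicate nonsmooth construction in the spirit of \autoref{thm:lb_sto_nonsmooth}, but with the stochastic gradient noise replaced by a single $\delta$-bounded initialization perturbation that gets amplified by the nonsmoothness at the reference point $\vx_0^{\sf ref}=\vzero$.

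\smallskip
\noindent\emph{Step 1 (the $\Omega(\delta^2)$ piece).} Take any $O(1)$-Lipschitz nonsmooth convex $g:\R^d\to\R$ and define $f(\vx,u):=g(\vx)$ on $\R^{d+1}$, so that $\partial f/\partial u\equiv 0$. Starting from the reference $\vx_0^{\sf ref}=\vzero$, let one run use the inexact initialization $(\vzero,+\nicefrac{\delta}{\sqrt{2}})$ and the other $(\vzero,-\nicefrac{\delta}{\sqrt{2}})$; both are within $\delta$ of $\vx_0^{\sf ref}$. Because $u$ is absent from $f$, every \sfa{} iteration leaves the $u$-coordinate untouched, so $|u_T^{(1)}-u_T^{(2)}|^2=\Omega(\delta^2)$ and the claim follows.

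\smallskip
\noindent\emph{Step 2 (the $\Omega(\nicefrac{1}{T\eps^2})$ piece).} Reuse the template $f(\errx,\vy,\vz,w)=\euG(\errx,\vy,\vz)+2\eps\max\{w+1,0\}$ from \autoref{thm:lb_sto_nonsmooth} (with a possibly refined variant $\widetilde{\euG}$ of \eqref{def:euG} designed so the cascade below does not collapse). Fix the reference $\vx_0^{\sf ref}=\vzero$ and pick the two inexact initializations to differ only in $\errx[1]$, namely $\errx_0^{(1)}[1]=+\delta$ and $\errx_0^{(2)}[1]=-\delta$. At $\vzero$ the function $\widetilde{\euG}$ is maximally nonsmooth, and the sign of $\errx[1]$ selects completely different elements of the subdifferential: one run immediately activates a $\vy[1]$-direction subgradient, the other a $\vz[1]$-direction subgradient. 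The cost function is built so that after each step the newly produced iterate again sits at a nondifferentiable point whose active branch is forced (through the $\chi$-switches and the $\errx$-sign bookkeeping inside $\widetilde{\euG}$) to lie in $\vy[i_t]$ for Run~1 and in $\vz[i_t]$ for Run~2, yielding the analog of \autoref{lem:pattern}. Given that pattern, the standard accuracy argument—$2\eps$-accuracy demands $w_T\leq -\nicefrac{1}{2}$ and therefore $\sum_t|\lambda_t^{(T)}|\gtrsim \nicefrac{1}{\eps}$—combined with Cauchy--Schwarz on $\|\vy_T^{(1)}-\vy_T^{(2)}\|^2+\|\vz_T^{(1)}-\vz_T^{(2)}\|^2\gtrsim \frac{1}{|\Iw|}(\sum_t\lambda_t^{(T)})^2$ produces the desired $\Omega(\nicefrac{1}{T\eps^2})$ bound, uniformly in $\delta>0$.

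\smallskip
\noindent\emph{Main obstacle.} Step~1 is routine, but Step~2 is the real work. In \autoref{thm:lb_sto_nonsmooth} the cascade of distinct $i_t$-coordinates is sustained because every iteration injects \emph{fresh} noise into a new $\errx$-slot, so the maximum in $\euK$ is automatically relocated to coordinate $i_t=t$; here we only have the one-shot perturbation in $\errx[1]$, and a naive reuse of $\euG$ causes the adversarial subgradient choice to flip back and forth between the $\vy$- and $\vz$-branches of the same index after a single step, yielding only $O(\delta^2)$ rather than $\Omega(\nicefrac{1}{T\eps^2})$. The bulk of the proof will therefore be in engineering $\widetilde{\euG}$ so that the sign information from the initial $\delta$-perturbation is preserved and propagated across all $T$ iterations—e.g.\ by making each step's subgradient update deposit a new asymmetric term in an as-yet-untouched pair $(\vy[i_t],\vz[i_t])$ whose very existence is controlled by the cumulative sign pattern of the iterate so far. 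Once such a $\widetilde{\euG}$ is in place, the bookkeeping proceeds exactly as in the stochastic proof.
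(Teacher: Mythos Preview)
Your Step~1 is fine and matches the paper. The gap is entirely in Step~2. You correctly diagnose that a single perturbation in $\errx[1]$ does not sustain the cascade of \autoref{lem:pattern}, but you then propose to rescue the argument by engineering an unspecified $\widetilde{\euG}$ that ``propagates the sign information across all $T$ iterations.'' That is the missing idea, and the proposal as written gives no indication of how to build such a function; without it, Step~2 is just a restatement of the obstacle.

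The paper sidesteps the whole difficulty with a much simpler idea that you overlook: instead of putting the entire $\delta$-budget into one coordinate $\errx[1]$, spread it across \emph{all} $T$ error coordinates, taking $\errx_0[j]=\delta/\sqrt{2T}$ for every $j=1,\dots,T$. This is still a $\delta$-bounded perturbation. The point is that now, at every step $t\le T-1$, at most $t$ of the $\errx$-coordinates have been touched by previous subgradients, so at least one coordinate $j$ still has $\errx_t[j]+\vy_t[j]=\delta/\sqrt{2T}>0$; hence the outer $\max$ is strictly positive and some $\vy[i_t]$-direction is activated at every step. Meanwhile, the reference run from $\vzero$ never leaves the ``$0$'' branch, so $\vy_T^{\sf ref}=\bm 0$. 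Because the same subgradient also drives the $w$-coordinate, one gets the identity $\tfrac{1}{2\eps}w_T=\sum_i \vy_T[i]$, and the usual accuracy/Cauchy--Schwarz argument gives $\|\vy_T\|^2\gtrsim \tfrac{1}{T\eps^2}$. Note in particular that the elaborate $\vy/\vz$ branching and the $\chi$-switches of $\euG$ are not needed here at all: a bare $\max\{0,\errx[1]+\vy[1],\dots,\errx[T]+\vy[T]\}$ suffices, because you are comparing against the reference run (which stays at zero) rather than against a second symmetric perturbation.
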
 
\begin{proof} 
For the initialization error model, we use a simpler construction.
For $\vx=(\errx,\vy,w,\dum)\in (\R^T)^2 \times \R^2 \to \R$, consider the cost defined as
    \begin{align*} 
        f(\errx,\vy,w) = \max\{0,~ \errx[1]+\vy[1] ,~ \dots,~\errx[T]+\vy[T] \}  +2\eps\cdot \max\{w+1,0 \}\,.
    \end{align*}
Here $\dum$ is a dummy coordinate that does not appear in the cost.
For both max terms above, we consider the subgradient that outputs the gradient of the first argument that achieves the maximum.       Then, $f$ is clearly $O(1)$-Lipschitz.

    We set the reference and inexact intializations as follows: 
\begin{align}\label{nonsmooth_inexact_init}
         \vx_0^{\sf ref}=(0,0,\dots,0)\quad \text{and} \quad \vx_0 = (\underbrace{ \delta/\sqrt{2T},\cdots,\delta/\sqrt{2T}}_{\text{first $T$}}, \underbrace{ 0,\cdots,0}_{\text{second   $T$}}, 0, \delta/\sqrt{2})\,.
     \end{align}
Following the previous notations, we write the iterates as $\vx_t=(\errx_t,\vy_t, w_t)\in (\R^T)^2 \times \R$.
Moreover, we will write the iterates corresponding to the reference initialization  as
$\vx_t^{\sf ref}=((\errx_t)^{\sf ref},\vy_t^{\sf ref}, w_t^{\sf ref})$.

     From the fact that the algorithm has to achieve $\eps$-suboptimality, it follows that  $w_T \leq -1/2$. 
     On the other hand, from the inexact initialization
     \eqref{nonsmooth_inexact_init}, it holds that for all $t=0,\dots, T-1$,
     \begin{align*}
     \exists i_t\in \{1,\dots, T\}~s.t.~
     \begin{cases}
          \frac{\partial}{\partial \vy[i_t]} f(\errx_t,\vy_t,w_t) =1 &\text{and} \\
          \frac{\partial}{\partial \vy[i]} f(\errx_t,\vy_t,w_t) = 0 & \text{for }i\neq i_t.
     \end{cases}
     \end{align*}
  Hence, it holds that $\frac{1}{2\eps} w_T  =  \sum_{i=1}^T  \vy_T[i]$.
Since we know $\vy_T^{\sf ref}=(0,0,\dots, 0)$ and $\dum^{\sf ref}= 0$, the following deviation lower bound holds:
 \begin{align*}
     \norm{\vx_T -  \vx_T^{\sf ref}}^2 &\geq  \norm{\vy_T -  \vy_T^{\sf ref}}^2 +  \norm{\dum_T -  \dum_T^{\sf ref}}^2 =  \sum_{i=1}^{T}  (\vy_T[i])^2 + \dum_T^2  \\
    &\geq   \frac{1}{T}\cdot  \left(\sum_{i=1}^{T} \vy_T[i]    \right)^2  + \frac{\delta^2}{2}= \frac{1}{T}\cdot\frac{1}{4\eps^2}\cdot \left( w_T\right)^2       +\frac{\delta^2}{2}\gtrsim     \frac{1}{T\eps^2}  +\delta^2
    \end{align*} 
    This completes the proof.
\end{proof}

  \subsection{Stochastic inexact gradient model (strongly-convex costs)}
\label{pf:lb_sto_nonsmooth_str}

 \begin{restatable}{theorem}{lbstononsmoothstr}{\bf (Lower Bound)}\label{thm:lb_sto_nonsmooth_str}
Let $\eps>0$  and $T$ be a given number of iterations. There exists a $O(1)$-Lipschitz (nonsmooth) and $\mu$-strongly convex function $f:\R^{O(T)}\to \R$ with a stochastic inexact gradient model such that any \sfa{} algorithm $\alg$ that  starts at $\vx_0=\vzero$ has its $(\eps,\delta)$-deviation   lower bounded by $\Omega(\frac{1}{T\mu^2} \wedge  \frac{\eps}{\mu})$.
 \end{restatable}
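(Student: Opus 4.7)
}
The plan is to combine the nonsmooth construction used in the proof of \autoref{thm:lb_sto_nonsmooth} with the strong-convexity mechanism used in the proof of \autoref{thm:lb_sto_smooth_str}. Concretely, for $\vx=(\errx,\vy,\vz,w)\in \R^T\times \R^T\times \R^T\times \R$, I would take a cost of the form
\begin{align*}
f(\vx) \;=\; \tfrac{1}{\mu}\,\euG(\errx,\vy,\vz) \;+\; \tfrac{1}{\mu}\,\ell(w) \;+\; \tfrac{\mu}{2}\|\vx\|^2,
\end{align*}
where $\euG$ is the helper function from \eqref{def:euG} and $\ell(w)=2c\cdot\max\{w+1,0\}$ is the linear-threshold term (for a small absolute constant $c$) used to force the last coordinate to move. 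The stochastic oracle is the same incremental construction as in \eqref{exp:inexact}: at step $t$, inject $\delta r_t\ve_{1+t}$ into the gradient with $r_t\sim\unif\{\pm1\}$, so that \autoref{lem:pattern} applies verbatim inside the $\euG$ component.

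The first step is to quantify how far the iterates must travel to achieve $\eps$-accuracy. Because $f$ is $\mu$-strongly convex, $\eps$-suboptimality implies $\E\|\vx_T-\vx^\star\|^2\le 2\eps/\mu$, which already gives the ``$\wedge\,\eps/\mu$'' cap in the bound. In the interesting regime $\eps/\mu \gtrsim 1/(T\mu^2)$, I would show that the unique minimizer $\vx^\star$ satisfies $|w^\star|=\Omega(1/\mu)$ (from the linear slope of $\ell$ balanced against the quadratic), so $|w_T|\gtrsim 1/\mu$ must hold. Since $|\partial_w\ell|\le 2c$ and the remaining $w$-gradient is $\mu w_t$, pushing $w_T$ by $\Theta(1/\mu)$ forces the \sfa{} coefficients to satisfy $\bigl|\sum_{t=0}^{T-1}\la^{(T)}_t\bigr|\gtrsim 1/\mu$ (an analog of condition~\eqref{large_coeff_nonsmooth}, with $1/\eps$ replaced by $1/\mu$).

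The second step is to transfer this $\sum|\la^{(T)}_t|$ lower bound into a variance lower bound on the $\vy,\vz$ coordinates, exactly as in the proof of \autoref{thm:lb_sto_nonsmooth}: by \autoref{lem:pattern}, for each $t$ there is an index $i_t\le t$ where, with probability $1/2$, the $\euG$-subgradient puts a full unit of mass on $\vy[i_t]$ and otherwise on $\vz[i_t]$, independently across the (effectively at most $|\Iw|\le T$) distinct indices. A Cauchy-Schwarz argument identical to the one at the end of \autoref{pf:lb_sto_nonsmooth} yields
\begin{align*}
\E\|\vy_T-\E\vy_T\|^2 + \E\|\vz_T-\E\vz_T\|^2 \;\gtrsim\; \tfrac{1}{T}\Bigl(\tfrac{1}{\mu}\sum_{t=1}^{T-1}\la^{(T)}_t\Bigr)^2 \cdot \tfrac{1}{\mu^2}\;\gtrsim\;\tfrac{1}{T\mu^2},
\end{align*}
where the extra $1/\mu$ factors come from the $1/\mu$ rescaling of $\euG$ in the cost (which rescales each effective step by $1/\mu$). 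Combining with the strong-convexity cap gives $\Omega\bigl(\tfrac{1}{T\mu^2}\wedge\tfrac{\eps}{\mu}\bigr)$.

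The main obstacle I expect is a bookkeeping one: the $\tfrac{\mu}{2}\|\vx\|^2$ term contributes additional gradient components in the $\errx,\vy,\vz$ coordinates that can, in principle, perturb the clean ``subgradient-flip'' pattern of \autoref{lem:pattern}. I plan to handle this by restricting attention to the regime where iterates stay $O(1/\mu)$-bounded (which already follows from strong convexity plus $\eps$-accuracy), and verifying that the strong-convex gradient component is a deterministic shift that does not affect the \emph{random flip} of $\euG$-subgradients (which is what drives the variance). The same argument also needs the mild $|\la^{(T)}_{t-1}|\neq 0$ hypothesis reused from \autoref{thm:lb_sto_nonsmooth}, ensuring the incremental noise $\delta r_t\ve_{1+t}$ actually perturbs coordinate $\errx[t+1]$ and triggers the subgradient flip regardless of how small $\delta$ is.
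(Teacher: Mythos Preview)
Your plan has a real gap in the step you flag as ``bookkeeping.'' The quadratic term $\tfrac{\mu}{2}\|\vx\|^2$ contributes $\mu\vy_t[i]$ to the $\vy[i]$-gradient, and this is \emph{not} a deterministic shift: it depends on which random flips $r_0,\ldots,r_{t-1}$ sent mass to $\vy[i]$ versus $\vz[i]$ in earlier rounds. So you cannot simply reuse the variance computation from \autoref{thm:lb_sto_nonsmooth}, where $\vy_T[i]$ was exactly $\sum_{t:i_t=i}\la^{(T)}_t$ or $0$ with equal probability. A second, related issue is that your route through the coefficient bound $|\sum_t\la^{(T)}_t|\gtrsim 1/\mu$ does not go through cleanly: once the $w$-gradient is $1+\mu w_t$ (or $\tfrac{1}{\mu}\partial_w\ell+\mu w_t$ in your scaling) rather than a constant, moving $w_T$ by $\Theta(1/\mu)$ no longer pins down $\sum_t\la^{(T)}_t$. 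Finally, the $\tfrac{1}{\mu}$ scaling of $\euG$ makes the cost $O(1/\mu)$-Lipschitz, which does not match the theorem's $O(1)$-Lipschitz hypothesis.

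The paper sidesteps all of this with a different mechanism: it takes $\ell^\mu(w)=w+\tfrac{\mu}{2}w^2$ (unthresholded, slope $1$) and leaves $\euG$ unscaled, then proves an exact \emph{conservation law} (\autoref{lem:conserve}): $w_t=\sum_i\bigl(\vy_t[i]+\vz_t[i]\bigr)$ for every $t$, by matching the $w$-gradient $1+\mu w_t$ to the sum of the $(\vy,\vz)$-gradients. This bypasses any need to bound $\sum_t\la^{(T)}_t$: $\eps$-accuracy forces $w_T\approx-\tfrac{1}{2\mu}$, hence $\sum_i(\vy_T[i]+\vz_T[i])\approx-\tfrac{1}{2\mu}$ directly. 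The $\vy\leftrightarrow\vz$ symmetry of the construction makes each sum $\vy_T[i]+\vz_T[i]$ deterministic (even with the quadratic term, since the $\mu\vy_t[i]+\mu\vz_t[i]$ contribution is itself a function of these sums), and the flip $r_{i-1}$ decides whether $\vy_T[i]$ equals this sum or $0$. Cauchy--Schwarz on the deterministic sums then gives the $\Omega(1/(T\mu^2))$ variance. The key idea you are missing is this conservation law, together with the choice $\ell^\mu(w)=w+\tfrac{\mu}{2}w^2$ that makes it hold.
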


\begin{proof}
 For $\vx=(\errx,\vy,\vz,  w)\in (\R^T)^3 \times \R^2$, consider the cost defined as
    \begin{align}\label{def:nonsmooth_str_lower}
    f(\errx,\vy,\vz, w) = \underbrace{\euG(\errx+\delta \ve_1,\vy,\vz) +\frac{\mu}{2} \norm{(\errx,\vy,\vz )}^2}_{=:\euGmu(\errx,\vy,\vz)} +\underbrace{ w + \frac{\mu}{2}w^2}_{=:\ellmu(w)} \,.
    \end{align}
We consider the same inexact gradient oracle defined in \eqref{exp:inexact}.
    Then similarly to
    \autoref{lem:pattern}, it holds that for each $t=1,2,\dots, T-1$, there exists $i_t \in \{1,\dots, t\}$ such that the following holds: 
\begin{align} \label{exp:pattern_recall}
\begin{cases}
     \frac{\partial}{\partial \vy[i_t]} \euG(\errx_t,\vy_t,\vz_t,v_t) =1,~~ \frac{\partial}{\partial \vz[i_t]} \euG(\errx_t,\vy_t,\vz_t,v_t) = 0   & \text{with probability  } $1/2$, \\
       \frac{\partial}{\partial \vy[i_t]}  \euG(\errx_t,\vy_t,\vz_t,v_t) =0, ~~\frac{\partial}{\partial \vz[i_t]} \euG(\errx_t,\vy_t,\vz_t,v_t) = 1  & \text{with probability  } $1/2$.
    \end{cases}
\end{align}
Moreover, for $i\neq i_t$, $\frac{\partial}{\partial \vy[i]} \euG(\errx_t,\vy_t,\vz_t,v_t)=\frac{\partial}{\partial \vz[i]} \euG(\errx_t,\vy_t,\vz_t,v_t)=0$.
Throughout the rest of the proof, we use the following notations: 
\begin{align*}
\begin{cases}
    \gy_t = (\gy_t[1],\gy_t[2],\dots, \gy_t[T]):= \nabla_\vy \euGmu(\errx_t,\vy_t,\vz_t,v_t) ,\\
    \gz_t= (\gz_t[1],\gz_t[2],\dots, \gz_t[T]):= \nabla_\vz \euGmu(\errx_t,\vy_t,\vz_t,v_t) \\ 
    g^w_t:=\frac{\partial \ellmu}{\partial w }(w_t)
\end{cases}
\end{align*}
We prove the following crucial result for the proof.

\begin{lemma} \label{lem:conserve}
 For each $t=0,1,2,\dots, T$,  the output of a \sfa{} algorithm satisfies the following:
\begin{align} \label{sum:preserve_str}
\begin{split}
   w_t   = \sum_{i=1}^T \vy_t[i] + \vz_t[i] \quad \text{and} \quad  g^w_t  = g^v_t +  \sum_{i=1}^T \gy_t[i] +\gz_t[i]\,.
   \end{split} 
 \end{align}
\end{lemma}
\begin{proof}
We prove by induction on $t$.
We first prove the statement for $t=0$. 
Recall the definition $\euGmu(\errx,\vy,\vz):=\euG(\errx+\delta \ve_1,\vy,\vz) +\frac{\mu}{2} \norm{(\errx,\vy,\vz )}^2$. Since the first coordinate of $\errx$ is $\delta$, it follows that $g^w_0 = 1$,   $\gy_0=\ve_1$ and $\gz_0=\bm{0}$. Hence, the statement holds for $t=0$.

Assume that the conclusion holds for some $t$.
We will first show that $w_{t+1}    = \sum_{i=1}^T \vy_{t+1}[i] + \vz_{t+1}[i]$. 
Using the definition of \ref{exp:foi} together with the inductive hypothesis, we have  
\begin{align*}
    w_{t+1} &=   -\sum_{j=0}^{t} \la^{(t+1)}_j g^w_j  =  - \sum_{j=0}^{t} \la^{(t+1)}_j \left( \sum_{i=1}^T \gy_j[i] +\gz_j[i]\right)\\
    &=  -\sum_{i=1}^T\sum_{j=0}^{t} \la^{(t+1)}_j \left( \gy_j[i] +\gz_j[i]\right) =  \sum_{i=1}^T \vy_{t+1}[i] +\vz_{t+1}[i].
\end{align*}
Next, we show that  $g^w_{t+1}  =   \sum_{i=1}^T \gy_{t+1}[i] +\gz_{t+1}[i]$.  Using the conclusion we just proved, we obtain 
\begin{align*}  
g^w_{t+1} &= 1+ \mu w_{t+1} = 1+ \mu \left( \sum_{i=1}^T \vy_{t+1}[i]  + \vz_{t+1}[i]\right) =\sum_{i=1}^T \left(  \gy_{t+1}[i] +\gz_{t+1}[i]\right) \,,
\end{align*}
where in the last equality, we used the  fact that  $\nabla \euGmu(\errx_t,\vy_t,\vz_t)$ is zero except for a single coordinate that is equal to $1$.
\end{proof}

Note that in order for $f(\vx_T)$ to achieve $\eps$-accuracy, it must be that 
\begin{align*}
    w_T  \in \left[-\frac{1}{2\mu} -O(\sqrt{\eps}), -\frac{1}{2\mu} +O(\sqrt{\eps})\right]\,.
\end{align*}
Note that by symmetry,  $\vy_T[i]+\vz_T[i]$ is a deterministic quantity.
\autoref{lem:pattern} ensures that 
\begin{align*}
    \vy_T[i] = \begin{cases}
    \vy_T[i]+\vz_T[i] & \text{with probability}~$1/2$,\\
    0& \text{with probability}~$1/2$,
    \end{cases}\quad \text{for all}~i=2,\dots,T. 
\end{align*}
Hence, the   deviation $\E\norm{\vx_T - \E[\vx_T]}^2$ is again lower bounded by $\frac{1}{4} \sum_{i=2}^T   (\vy_T[i]+\vz_T[i])^2$.
Since either $\vy_T[i]$ or $\vz_T[i]$ has to be zero, it follows that $(\vy_T[i]+\vz_T[i])^2=(\vy_T[i])^2+ (\vz_T[i])^2
$.
From the $\eps$-suboptimality of $\vx_T$, it also holds that $\sum_{i=2}^T (\vy_T[i])^2+ (\vz_T[i])^2 \leq \frac{2}{\mu} \eps$.
Hence either $\sum_{i=2}^T (\vy_T[i])^2+ (\vz_T[i])^2 =\Omega(\frac{\eps}{\mu})$ (in which case the deviation is lower bounded by $\Omega(\frac{\eps}{\mu})$), or we use \autoref{lem:conserve} to conclude that 
\begin{align*}
     \sum_{i=2}^T  (\vy_T[i]+\vz_T[i])^2 &\overset{(a)}{\geq}    \frac{1}{T-1}\cdot \left( \sum_{i=2}^T  \vy_T[i]+\vz_T[i]  \right)^2 \\
     &=    \frac{1}{T-1}\cdot \left(w_T-(\vy_T[1]+\vz_T[1])\right)^2 \gtrsim \frac{1}{T\mu^2}\,,
    \end{align*}  
where $(a)$ follows form the Cauchy-Schwartz inequality, and the last inequality is due to the fact that $(\vy_T[1]+\vz_T[1])^2 \lesssim \eps$. This completes the proof.
\end{proof}

 \subsection{Non-stochastic inexact gradient model (strongly-convex costs)}
\label{pf:lb_det_nonsmooth_str}

 \begin{restatable}{theorem}{lbdetnonsmoothstr}\label{thm:lb_det_nonsmooth_str} {\bf (Lower Bound)}
Let $\eps>0$  and $T$ be a given number of iterations. There exists a $O(1)$-Lipschitz (nonsmooth) and $\mu$-strongly convex function $f:\R^{O(T)}\to \R$ with a non-stochastic inexact gradient model such that any \sfa{} algorithm $\alg$   that starts at $\vx_0=\vzero$ has its $(\eps,\delta)$-deviation   lower bounded by $\Omega((\frac{1}{T\mu^2} +\frac{\delta^2}{\mu^2})\wedge \frac{\eps}{\mu} )$.
 \end{restatable}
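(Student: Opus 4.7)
The target $\Omega\bigl((\tfrac{1}{T\mu^{2}}+\tfrac{\delta^{2}}{\mu^{2}})\wedge\tfrac{\eps}{\mu}\bigr)$ decomposes exactly like the non-strongly convex bound of \autoref{thm:lb_det_nonsmooth}, with $1/\eps$ replaced by $1/\mu$, so the plan is to splice the strongly convex construction \eqref{def:nonsmooth_str_lower} of \autoref{thm:lb_sto_nonsmooth_str} with the dummy-coordinate trick of \autoref{thm:lb_det_nonsmooth}. I would take $\vx=(\errx,\vy,\vz,w,\dum)\in(\R^{T})^{3}\times\R\times\R$ and
\[
f(\vx)\;=\;\euGmu(\errx,\vy,\vz)\;+\;\ellmu(w)\;+\;\tfrac{\mu}{2}\dum^{2},
\]
which is $O(1)$-Lipschitz, nonsmooth, and $\mu$-strongly convex. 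The two runs use opposite deterministic noise: Run $A$ takes $g(\vx_{t})=\nabla f(\vx_{t})+\tfrac{\delta}{\sqrt 2}\ve_{1+t}+\tfrac{\delta}{\sqrt 2}\ve_{\dum}$, while Run $B$ flips both signs; the per-step noise norm is exactly $\delta$, so the oracle is $\delta$-bounded.

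For the $\Omega(1/(T\mu^{2})\wedge\eps/\mu)$ piece I would recycle the analysis of \autoref{thm:lb_sto_nonsmooth_str} verbatim, now with deterministic (rather than random) branch selection. By the analogue of \autoref{lem:pattern}, the $+\delta/\sqrt 2$ sign of the $\errx$-perturbation forces $\tfrac{\partial}{\partial \vy[i_{t}]}\euG=1$ at every iteration in Run $A$, so $\vz^{(A)}_{T}\equiv 0$, and Run $B$ symmetrically yields $\vy^{(B)}_{T}\equiv 0$. \autoref{lem:conserve} then gives $w^{(A)}_{T}=\sum_{i}\vy^{(A)}_{T}[i]$ and $w^{(B)}_{T}=\sum_{i}\vz^{(B)}_{T}[i]$, and $\eps$-accuracy combined with strong convexity of $\ellmu$ forces $|w^{(\cdot)}_{T}|\gtrsim 1/\mu$ unless we are already in the $\Omega(\eps/\mu)$ cap. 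Cauchy--Schwarz then gives $\|\vy^{(A)}_{T}\|^{2}+\|\vz^{(B)}_{T}\|^{2}\gtrsim 1/(T\mu^{2})$, and since $\vy^{(B)}_{T}=\vz^{(A)}_{T}=0$ this bounds the $(\vy,\vz)$-block contribution to $\|\vx^{(A)}_{T}-\vx^{(B)}_{T}\|^{2}$ from below.

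For the $\Omega(\delta^{2}/\mu^{2})$ piece, the dummy recursion is $\dum^{(A)}_{T}=-\sum_{t}\la^{(T,A)}_{t}\bigl(\mu\dum^{(A)}_{t}+\delta/\sqrt 2\bigr)$ (and analogously with the opposite sign in Run $B$). Since the \sfa{} coefficients are scalars shared across coordinates, the identity $w_{T}=-\sum_{t}\la^{(T)}_{t}g^{w}_{t}$ from \autoref{lem:conserve}, together with the fact that $\ellmu$ has optimum at $w^{\star}=-1/\mu$ and $|g^{w}_{t}|=|1+\mu w_{t}|=O(1)$ along the trajectory, forces a signed-sum lower bound $|\sum_{t}\la^{(T)}_{t}|=\Omega(1/\mu)$ in each run---the strongly convex analogue of the $|\sum_{t}\la^{(T)}_{t}|\gtrsim 1/\eps$ estimate in the proof of \autoref{thm:lb_det_nonsmooth}. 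A telescoped bound on the dummy recursion, using that the $w$-block $\eps$-accuracy also keeps each $\la_{t}$ on the order $O(1/\mu)$ so the contraction factors $(1-\la_{t}\mu)$ stay bounded away from $0$, then converts the integrated opposite-sign drift into $|\dum^{(A)}_{T}-\dum^{(B)}_{T}|^{2}\gtrsim\delta^{2}/\mu^{2}$, again capped by $\Omega(\eps/\mu)$.

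The chief technical difficulty will lie in making the signed-sum bound $|\sum_{t}\la^{(T)}_{t}|\gtrsim 1/\mu$ rigorous: in the non-strongly convex case of \autoref{thm:lb_det_nonsmooth} the analogue is easy because $\tfrac{\partial\ell}{\partial w}$ is the constant $2\eps$, whereas here $g^{w}_{t}=1+\mu w_{t}$ varies along the trajectory, so a priori $\la^{(T)}_{t}$'s of opposite signs could cancel in the unweighted sum while still producing the required weighted sum $\sum_{t}\la^{(T)}_{t}g^{w}_{t}\approx 1/\mu$. The secondary difficulty is that since Runs $A$ and $B$ observe different gradients, their \sfa{} coefficients need not coincide, ruling out a clean reflection-symmetry shortcut. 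I expect to resolve both by exploiting the monotone structure of $w_{t}$ (so that $g^{w}_{t}$ is non-negative and bounded) to control cancellation within each run and then arguing separately for Runs $A$ and $B$; a safety fallback is to modify $\ellmu$ to a piecewise linear but still $\mu$-strongly convex profile with a constant-gradient region, which would make the $\sum_{t}\la^{(T)}_{t}$ bound immediate.
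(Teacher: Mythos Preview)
Your plan for the $\Omega(1/(T\mu^2))$ piece is essentially the paper's, but your treatment of the $\Omega(\delta^2/\mu^2)$ piece misses the key idea and, as written, does not close. The paper does \emph{not} try to prove $|\sum_t \la^{(T)}_t|\gtrsim 1/\mu$. Instead it observes that, with $\tfrac{\mu}{2}\dum^2$ added to the cost and the constant noise $\tfrac{\delta}{\sqrt 2}$ injected into the $\dum$-coordinate, the $\dum$- and $w$-dynamics are \emph{exactly proportional}: the oracle returns $g^{\dum}_t=\mu\,\dum_t+\tfrac{\delta}{\sqrt 2}$ and $g^{w}_t=1+\mu\,w_t$, so starting from $\dum_0=w_0=0$ one has $g^{\dum}_t=\tfrac{\delta}{\sqrt 2}\,g^{w}_t$ and hence $\dum_t=\tfrac{\delta}{\sqrt 2}\,w_t$ for all $t$, by the same induction as \autoref{lem:conserve} (the \sfa{} uses the same scalar $\la^{(t)}_i$ on every coordinate). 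From $|w_T|\gtrsim 1/\mu$ you then read off $\dum_T^2\gtrsim \delta^2/\mu^2$ immediately---no control on $\sum_t\la^{(T)}_t$ is needed, and the ``chief technical difficulty'' you flag simply does not arise.

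Two further points. First, the paper compares the inexact run to the \emph{exact} run (where $\dum_T^{\sf exact}=0$ and $\vy_T^{\sf exact}=\vz_T^{\sf exact}=\bm 0$), not to an opposite-sign run; this removes your ``secondary difficulty'' about mismatched \sfa{} coefficients and also means you do not need to argue that Run~$A$ deterministically selects the $\vy$-branch while Run~$B$ selects the $\vz$-branch (that claim is delicate for general \sfa{} with possibly negative $\la^{(t)}_i$; cf.\ Case~2 in the proof of \autoref{lem:pattern}). Second, your fallback of a ``piecewise linear but still $\mu$-strongly convex'' $\ellmu$ with a constant-gradient region is inconsistent: strong convexity forces the gradient to be strictly increasing, so no nontrivial constant-gradient region exists, and adding $\tfrac{\mu}{2}w^2$ to a piecewise-linear part still leaves $g^{w}_t$ varying with $w_t$. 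The right fix is the proportionality observation above, which makes the whole $\delta^2/\mu^2$ argument a one-liner.
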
 

\begin{proof}
We consider the same construction as the  one considered in the proof of \autoref{thm:lb_sto_nonsmooth_str}.
The only difference is that  now we add an extra dummy coordinate $\dum$ (the $(3T+2)$-th coordinate), and add $\frac{\mu}{2}u^2$ to the overall cost (so that the overall cost is still $\mu$-strongly convex).  
Following the previous convention, we will write the iterate as $\vx_t=(\errx_t,\vy_t,\vz_t,w_t,\dum_t)\in (\R^T)^3 \times \R^3$. 
We define the inexact gradient as:
\begin{align*} 
        g(\vx_t) =         \nabla f(\vx_t) + \frac{\delta}{\sqrt{2}}   \ve_{1+t}+  \frac{\delta}{\sqrt{2}}\ve_{3T+3}\,,
    \end{align*}
where $\ve_j$ is the $j$-th coordinate vector.
Then following the same argument as \autoref{lem:conserve}, it holds that
\begin{align} \label{exp:str_dev}  
    w_T   =  \sum_{i=1}^T  \vy_T[i] + \vz_T[i] \quad \text{and}\quad  \delta/\sqrt{2}\cdot w_T =   \dum_T\,.
\end{align} 
From the fact that $\vx_T$ achieves $\eps$-suboptimality, it must be that 
\begin{align*}
   w_T  \in \left[-\frac{1}{2\mu} - O(\sqrt{\eps}), -\frac{1}{2\mu} + O(\sqrt{\eps})\right]\,.
\end{align*}
Now the rest of the proof follows similarly to that of \autoref{thm:lb_sto_nonsmooth_str}.
Using the facts $\vy_T^{\sf exact}=0$, $\vz_T^{\sf exact}=0$ and $\dum_T^{\sf exact}=0$, we have the following
lower bound on the deviation:
 \begin{align*}
     \norm{\vx_T -  \vx_T^{\sf exact}}^2 &\geq  \norm{\vy_T -  \vy_T^{\sf exact}}^2+ \norm{\vz_T -  \vz_T^{\sf exact}}^2+  \norm{\dum_T - \dum_T^{\sf exact}}^2 \\
    &\geq   \dum_T^2+\sum_{i=2}^T [(\vy_T[i])^2 +(\vz_T[i])^2 ]  
\end{align*}
 From the $\eps$-suboptimality of $\vx_T$, it must be that $\dum_T^2 +\sum_{i=2}^T (\vy_T[i])^2+ (\vz_T[i])^2 \leq \frac{2}{\mu} \eps$.
Thus, either $\dum_T^2 +\sum_{i=2}^T (\vy_T[i])^2+ (\vz_T[i])^2 =\Omega(\frac{\eps}{\mu})$ (in which case the deviation is lower bounded by $\Omega(\frac{\eps}{\mu})$), or we use the fact that $(\sum_{i=2}^{T} \vy_T[i] +\vz_T[i])^2 \gtrsim \frac{1}{\mu^2}$ to conclude that 
\begin{align*}
   &\dum_T^2+\sum_{i=2}^T [(\vy_T[i])^2 +(\vz_T[i])^2 ]   \geq \frac{\delta^2}{2}w_T^2 +  \frac{1}{2(T-1)}  \left(\sum_{i=1}^{T} \vy_T[i] +\vz_T[i]  \right)^2   \\ 
    & \quad\gtrsim \delta^2w_T^2 +  \frac{1}{T}  w_T^2 \gtrsim    \frac{\delta^2}{\mu^2}+\frac{1}{T\mu^2}  \,. 
    \end{align*} 
    This completes the proof.

\end{proof}

  \subsection{Inexact initialization model (strongly-convex costs)}
\label{pf:lb_init_nonsmooth_str}
 
\begin{restatable}{theorem}{lbinitnonsmoothstr}{\bf (Lower Bound)} \label{thm:lb_init_nonsmooth_str}
Let $\eps>0$  and $T$ be a given number of iterations. There exists a $O(1)$-Lipschitz (nonsmooth) and $\mu$-strongly convex function $f:\R^{O(T)}\to \R$ with an inexact initialization model such that any \sfa{} algorithm $\alg$   that starts at $\vx_0=\vzero$ has its $(\eps,\delta)$-deviation   lower bounded by $\Omega(\frac{1}{T\mu^2}\wedge \frac{\eps}{\mu})$.
 \end{restatable}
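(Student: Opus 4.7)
The plan is to adapt the construction from \autoref{thm:lb_sto_nonsmooth_str} to the inexact initialization setting, replacing the role played by the stochastic gradient perturbation $\delta r_t \ve_{1+t}$ with a one-time $\delta$-perturbation of the starting point. Concretely, I would consider the cost
\[
f(\errx, \vy, \vz, w) := \euG(\errx, \vy, \vz) + \tfrac{\mu}{2}\|(\errx, \vy, \vz)\|^2 + w + \tfrac{\mu}{2} w^2
\]
on $\R^T \times \R^T \times \R^T \times \R$ (where $\euG$ is the helper from \eqref{def:euG}), which is $\mu$-strongly convex and $O(1)$-Lipschitz on a bounded neighborhood of its minimizer. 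The reference initialization is $\vx_0^{\sf ref} = \vzero$, and the inexact initialization is $\vx_0 = \delta \ve_1$ (first $\errx$-coordinate), clearly within the $\delta$-ball of $\vx_0^{\sf ref}$. With the paper's ``first argument wins'' subgradient convention for the outer $\max$ in $\euG$, the subgradient of $\euG$ at $\vzero$ is $\vzero$ in the $\vy, \vz$ directions, so the reference run only updates the $w$-coordinate and stays at $\vy_t^{\sf ref} = \vz_t^{\sf ref} = \vzero$ for every $t$. For the inexact run, the $\delta$-perturbation in $\errx[1]$ activates the $i{=}1$ branch of the inner max, producing a nonzero unit subgradient in $\vy[1]$ or $\vz[1]$; from that point onward, the analog of \autoref{lem:pattern} will apply and each subsequent iterate will pick up a new index $i_t\in\{1,\dots,t\}$ whose coordinate in $\vy$ or $\vz$ receives a unit subgradient.

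The remainder of the argument mirrors the conservation-law / Cauchy--Schwarz strategy used in \autoref{thm:lb_sto_nonsmooth_str}. By induction on $t$ (just as in \autoref{lem:conserve}), I would establish
\[
w_t \;=\; \sum_{i=1}^T \bigl(\vy_t[i] + \vz_t[i]\bigr) \qquad \text{for every } t,
\]
where the key invariant is that every step's gradient contributes $+1$ to the $w$-component and exactly one $+1$ to either a $\vy[i]$ or a $\vz[i]$ component. Combined with the fact that $\eps$-suboptimality forces $w_T \in [-1/(2\mu) - O(\sqrt{\eps/\mu}), -1/(2\mu) + O(\sqrt{\eps/\mu})]$ (coming from the unique minimum of $w + \tfrac{\mu}{2}w^2$), this yields $\sum_{i}(\vy_T[i] + \vz_T[i]) = -1/(2\mu) \pm O(\sqrt{\eps/\mu})$. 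Since the pattern lemma guarantees that at each $i$ either $\vy_T[i]$ or $\vz_T[i]$ vanishes, one has $(\vy_T[i] + \vz_T[i])^2 = \vy_T[i]^2 + \vz_T[i]^2$; either $\sum_i (\vy_T[i]^2 + \vz_T[i]^2) = \Omega(\eps/\mu)$ (and we are done), or Cauchy--Schwarz gives
\[
\|\vy_T - \vy_T^{\sf ref}\|^2 + \|\vz_T - \vz_T^{\sf ref}\|^2 \;\geq\; \tfrac{1}{T}\Bigl(\textstyle\sum_{i=1}^T (\vy_T[i]+\vz_T[i])\Bigr)^2 \;\gtrsim\; \tfrac{1}{T\mu^2},
\]
which delivers the claimed $\Omega(\nicefrac{1}{T\mu^2} \wedge \nicefrac{\eps}{\mu})$ lower bound on the deviation.

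The main obstacle is the pattern lemma: I must verify that even under a fully adaptive \sfa{} (with possibly sign-indefinite coefficients $\la_i^{(t)}$), the subgradient of $\euG$ at every iterate along the inexact run remains concentrated on a single $(\vy, \vz)$-coordinate, so that the conservation law holds in the ``absolute value'' form analogous to \eqref{sum:smooth_str_sto}. A secondary subtlety is confirming that the reference run genuinely stays on the trivial trajectory $\vy_t = \vz_t = \vzero$ despite the nonsmoothness of $\euG$ at the origin; this relies crucially on the ``first argument'' tiebreaking rule, which makes the $\vy$- and $\vz$-components of the chosen subgradient vanish at $\vzero$. As in the companion nonsmooth lower bounds \autoref{thm:lb_sto_nonsmooth} and \autoref{thm:lb_det_nonsmooth}, a mild regularity assumption $|\la_0^{(T)}| \leq O(|\sum_{t=1}^{T-1} \la_t^{(T)}|)$ on the \sfa{} coefficients will likely be required to translate the coefficient sum into a deviation lower bound.
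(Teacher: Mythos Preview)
Your proposal carries over the stochastic-noise construction to the initialization setting, but the transplant breaks in a concrete place. The engine of \autoref{lem:pattern} and \autoref{lem:conserve} is that at every iterate the subgradient of the nonsmooth part contributes exactly $+1$ to some $\vy$- or $\vz$-coordinate. That property rests on two facts that fail under your single-coordinate initialization perturbation. First, the proof of \autoref{lem:pattern} uses that $\errx_t[t]\neq 0$ (a fresh coordinate is excited at each step by the oracle noise $\delta r_{t-1}\ve_t$); with only $\errx_0[1]=\delta$ and no further perturbations, $\errx_t[j]=0$ for all $j\ge 2$ at every $t$, so the ``fresh index'' mechanism is absent. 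Second, and more fatally, after one step with $\la_0^{(1)}>0$ you have $\vy_1[1]=-\la_0^{(1)}<0$; if in addition $\errx_1[1]>0$, every branch of $\euK$ equals $\errx_1[1]$ and the tie-breaking rule selects the $i=1$, $\vy$-branch. Since $\nabla\component(\vy_1[1])=0$ there, the $(\vy,\vz)$-part of the subgradient vanishes while $\partial_w f = 1+\mu w_1\neq 0$, so the identity $w_t=\sum_i(\vy_t[i]+\vz_t[i])$ is broken from step $t=2$ onward. Without that conservation law you cannot link $w_T\approx -1/(2\mu)$ to the $(\vy,\vz)$-sum, and the Cauchy--Schwarz step collapses.

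The paper sidesteps both difficulties with a strictly simpler construction: it drops $\euG$ (and the entire $\vz$-block) in favor of the linear helper $\max\{0,\ \errx[1]+\vy[1],\ \dots,\ \errx[T]+\vy[T]\}$ and perturbs \emph{all} $T$ of the $\errx$-coordinates at initialization. With linear branches the active one always contributes $\partial_{\vy[i_t]}=1$ (there is no $\component$ whose derivative can vanish), so the conservation law $w_t=\sum_i\vy_t[i]$ holds by the same induction as \autoref{lem:conserve}; and with every $\errx_0[i]>0$ the max stays positive and migrates across indices as the \sfa{} pushes individual $\vy[i]$'s down. The finish is then identical to what you outlined: either $\sum_i\vy_T[i]^2=\Omega(\eps/\mu)$, or Cauchy--Schwarz plus $|w_T|\gtrsim 1/\mu$ gives $\Omega(1/(T\mu^2))$. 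Finally, note that the coefficient assumption $|\la_0^{(T)}|\le O(|\sum_{t\ge 1}\la_t^{(T)}|)$ is unnecessary here: the conservation law ties $w_T$ directly to $\sum_i\vy_T[i]$ without ever summing \sfa{} coefficients, so this theorem (unlike its non-strongly-convex counterparts) needs no such restriction.
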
 

\begin{proof}
For the initialization error model, we use a simpler construction.
For $\vx=(\errx,\vy,w)\in (\R^T)^2 \times \R \to \R$, consider the cost defined as
    \begin{align*} 
        f(\errx,\vy,w) = \max\{0,~ \errx[1]+\vy[1] ,~ \dots,~\errx[T]+\vy[T] \}  + w + \frac{\mu}{2}\norm{(\errx,\vy,w)}^2\,.
    \end{align*}
    Then, clearly $f$ is clearly $O(1)$-Lipschitz.     We set the reference and inexact intializations as follows: \begin{align}
         \vx_0^{\sf ref}=(0,0,\dots,0)\quad \text{and} \quad \vx_0 = (\underbrace{ \delta,\cdots,\delta}_{\text{first $T$}}, \underbrace{ 0,\cdots,0}_{\text{second   $T$}}, 0)\,.
     \end{align}
Following the previous proofs, we will write $\vx_t=(\errx_t,\vy_t, w_t)\in (\R^T)^2 \times \R$ and the iterates corresponding to the reference initialization  as
$\vx_t^{\sf ref}=((\errx_t)^{\sf ref},\vy_t^{\sf ref}, w_t^{\sf ref})$.
From the fact that the algorithm has to achieve $\eps$-suboptimality, it follows that 
     \begin{align*}
         w_T\in \left[-\frac{1}{2\mu} - O(\sqrt{\eps}), -\frac{1}{2\mu} + O(\sqrt{\eps})\right].
     \end{align*}
 Then following the same argument as \autoref{lem:conserve} it holds that  $w_T  =  \sum_{i=1}^T  \vy_T[i]$.
 Using the fact that $\vy_T^{\sf ref}=\bm{0}$, we have the following deviation lower bound:
  \begin{align*}
     \norm{\vx_T -  \vx_T^{\sf ref}}^2 &\geq  \norm{\vy_T -  \vy_T^{\sf ref}}^2 =  \sum_{i=1}^{T}  (\vy_T[i])^2 
\end{align*}
 Moreover, from the $\eps$-suboptimality of $\vx_T$, it must be that $ \sum_{i=1}^T (\vy_T[i])^2  \leq \frac{2}{\mu} \eps$.
      Thus, either $ \sum_{i=1}^T (\vy_T[i])^2  = \Omega(\frac{\eps}{\mu})$ (in which case the deviation is lower bounded by $\Omega(\frac{\eps}{\mu})$),  or the following deviation lower bound holds:
 \begin{align*}
    \sum_{i=1}^{T}  (\vy_T[i])^2 \geq   \frac{1}{T}\cdot  \left(\sum_{i=1}^{T} \vy_T[i]    \right)^2 = \frac{1}{T} \cdot \left( w_T\right)^2       \gtrsim     \frac{1}{T\mu^2} \,.
    \end{align*} 
    This completes the proof.
\end{proof}

\section{Proof of upper bounds (smooth costs)}
\label{sec:ub_smooth}

\subsection{Stochastic inexact gradient model}

\label{pf:ub_smooth_sto}

 \begin{restatable}{theorem}{ubstosmooth} {\bf (Upper Bound)}
 \label{thm:ub_sto_smooth}
Let $f$ be an $O(1)$-smooth convex cost function. Let $\eps>0$, and $T$ be a given number of iterations.
Under the stochastic inexact gradient model, the $(\eps,\delta)$-deviation of standard SGD with an appropriately chosen step size is $O(\frac{\delta^2}{T\eps^2})$, provided that $T=\Omega(1/\eps^2)$.
\end{restatable}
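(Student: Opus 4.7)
The plan is to run SGD from a fixed initialization $\vx_0$ with a deliberately slowed-down stepsize $\eta = \Theta\!\left(\frac{1}{T\eps}\right)$ for $T$ iterations, output the average iterate $\bar\vx_T = \frac{1}{T}\sum_{t=1}^T \vx_t$, and analyze the two independent runs by coupling them through the randomness in their stochastic gradients. I will verify $\eps$-accuracy via the standard SGD-on-smooth-convex bound and then control the deviation through a one-step contraction argument that exploits co-coercivity.

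\textbf{Accuracy.} For an $L$-smooth convex $f$ with $L = O(1)$ and the optimum inside a ball of constant radius $D$, stepsize $\eta \le 2/L$, and $\delta$-bounded stochastic gradients, the standard analysis (e.g.\ convexity + descent lemma + noise decomposition) yields
\[
\E\bigl[f(\bar\vx_T)\bigr] - f^{\star} \;\le\; \frac{D^2}{2\eta T} + \frac{\eta \delta^2}{2}.
\]
Plugging $\eta = c/(T\eps)$ for a small constant $c$ makes the first term $O(\eps)$, and the second term is $O(\delta^2/(T\eps))$, which is $O(\eps)$ whenever $T \gtrsim \delta^2/\eps^2$; under the standing assumption $\delta = O(1)$ (built into the function class) this is implied by $T = \Omega(1/\eps^2)$.

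\textbf{Deviation.} Let $\vx_t,\vx_t'$ denote the two independent SGD trajectories sharing $\vx_0$, write $\Delta_t = \vx_t - \vx_t'$, and decompose each stochastic gradient as $g(\vx_t) = \nabla f(\vx_t) + \xi_t$ with $\E[\xi_t\mid \mathcal F_t] = 0$, $\E[\|\xi_t\|^2\mid \mathcal F_t]\le \delta^2$, and analogously for the primed run; the two noise sequences are independent. Conditioning on the past and using that $\xi_t,\xi_t'$ are conditionally mean-zero and independent,
\[
\E\bigl[\|\Delta_{t+1}\|^2 \,\big|\, \mathcal F_t\bigr] \;\le\; \|\Delta_t\|^2 - 2\eta\,\langle \Delta_t,\nabla f(\vx_t)-\nabla f(\vx_t')\rangle + \eta^2\|\nabla f(\vx_t)-\nabla f(\vx_t')\|^2 + 2\eta^2\delta^2.
\]
By co-coercivity of $\nabla f$ (smooth + convex), $\langle \Delta_t,\nabla f(\vx_t)-\nabla f(\vx_t')\rangle \ge \tfrac{1}{L}\|\nabla f(\vx_t)-\nabla f(\vx_t')\|^2$, so for $\eta \le 2/L$ the two cross-terms combine to a non-positive contribution. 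Telescoping gives $\E\|\Delta_t\|^2 \le 2t\eta^2\delta^2$. For the averaged iterate, Jensen yields $\E\|\bar\Delta_T\|^2 \le \frac{1}{T}\sum_{t=1}^T \E\|\Delta_t\|^2 \le 2T\eta^2\delta^2$. Substituting $\eta = c/(T\eps)$ delivers the claimed bound $O(\delta^2/(T\eps^2))$.

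\textbf{Main obstacle.} The only delicate point is the simultaneous calibration of $\eta$ and $T$: the accuracy analysis pushes toward a larger $\eta$ (around $1/\sqrt T$) while the deviation analysis pushes toward a smaller $\eta$ (around $1/(T\eps)$). The paper's ``slowed-down SGD'' philosophy resolves this by accepting a larger $T$ than the minimum needed for $\eps$-accuracy, using the excess iterations to average out the noise-induced spread of the two trajectories. Beyond that, the argument reduces to (i) the textbook SGD convergence bound, and (ii) the one-line co-coercivity contraction; neither requires a new idea.
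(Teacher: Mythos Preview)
Your proposal is correct and follows essentially the same approach as the paper: a one-step co-coercivity contraction to get $\E\|\Delta_t\|^2 \lesssim t\eta^2\delta^2$, Jensen for the averaged iterate, and the slowed-down stepsize $\eta=\Theta(1/(T\eps))$ to balance accuracy and deviation. The only cosmetic difference is that the paper couples one stochastic run to the deterministic exact-gradient trajectory $\{\vy_t\}$ (then bounds $\E\|\vx_T-\vx_T'\|^2$ via the triangle inequality through $\vy_T$), whereas you couple the two stochastic runs directly, picking up a harmless extra factor of $2$ in the noise term.
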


\begin{proof}
Throughout the proof, let $L$ be the smoothness constant of $f$.
We first derive the deviation bound. 
Let $\{\vx_t\}$ be the GD iterates with stochastic inexact gradients and $\{\vy_t\}$ be the GD iterates with exact gradients.
 Assuming that $\eta_t\leq \frac{2
}{L}$, the standard convex analysis yields the following one-step deviation inequality ($\E$ denotes the conditional expectation over the randomness in $g(\vx_t)$)
\begin{align*}
& \E\norm{\vx_{t+1}-\vy_{t+1}}^2 =\E\norm{(\vx_t-\eta_t g(\vx_t)  -(\vy_t -\eta_t \nabla f(\vy_t)) }^2 \\
  & \quad=  \norm{\vx_t -\vy_t}^2 - 2\eta_t \underbrace{\E \inp{\vx_t-\vy_t}{g(\vx_t) -\nabla f(\vy_t)}}_{=\inp{\vx_t-\vy_t}{\nabla f(\vx_t) -\nabla f(\vy_t)}}+ \eta_t^2 \E\norm{g(\vx_t)- \nabla   f(\vy_t) }^2  \\ 
   & \quad=  \norm{\vx_t -\vy_t}^2 - 2\eta_t \inp{\vx_t-\vy_t}{\nabla f(\vx_t) -\nabla f(\vy_t)}+ \eta_t^2 \E \norm{g(\vx_t)- \nabla   f(\vx_t) }^2  \\
  &\qquad +\underbrace{ 2\eta_t^2 \E\inp{g(\vx_t)-\nabla f(\vx_t)}{\nabla f(\vx_t) -\nabla f(\vy_t)}}_{=0}+ \eta_t^2 \norm{\nabla f(\vx_t) -\nabla f(\vy_t)}^2  \\
    & \quad=  \norm{\vx_t -\vy_t}^2 \underbrace{- 2\eta_t   \inp{\vx_t-\vy_t}{\nabla f(\vx_t)  -\nabla f(\vy_t)}+ \eta_t^2 \norm{\nabla f(\vx_t)- \nabla f(\vy_t) }^2}_{\leq 0}  \\
  &\qquad  + \eta_t^2 \E\norm{\nabla f(\vx_t) -g(\vx_t)}^2  \\
   &\quad \leq  \norm{\vx_t -\vy_t}^2+ \eta_t^2 \delta^2\,.   
 \end{align*}     
Here the last inequality is due to the standard fact about smooth and convex function that  for any $\vx,\vy$, $\frac{1}{L}\norm{\nabla f(\vx)-\nabla f(\vy)}^2 \leq \inp{\nabla f(\vx) -\nabla f(\vy)}{\vx-\vy}$ (see, e.g., \cite[(2.1.11)]{nesterov2018lectures}), together with the fact $\eta_t\leq \frac{2}{L}$.
Hence, we have proved
 \begin{align} \label{dev:smooth}
     \E\norm{\vx_T-\vy_T}^2 \leq \delta^2 \sum_{t} \eta_t^2\,.
 \end{align}

Now for the upper bound, we consider variants of SGD. 
From the standard convergence result (see, e.g., \cite[Thm. 6.3]{bubeck2014convex}), with step size $\eta_t \equiv \frac{1}{L+\nicefrac{1}{\eta}} $ for some $\eta>0$,  
 \begin{align} \label{conv:smoothsgd}
     \E f\left(\frac{1}{T}\sum_{t=1}^T \vx_{t}\right) -f(\vx_*) \leq \frac{L\norm{\vx_0-\vx_*}^2 }{2T}   +\frac{\norm{\vx_0-\vx_*}^2 }{2\eta T}   + \frac{\eta \delta^2}{2}\,.
     \end{align}
For simplicity, let $\bar{\vx}_T:=\frac{1}{T}\sum_t\vx_t$, and 
$\bar{\vy}_T:=\frac{1}{T}\sum_t\vy_t$.
From the convexity of $\norm{\cdot}^2$, we have
 \begin{align} \label{dev:average}
     \E \norm{\bar{\vx}_T -\bar{\vy}_T}^2 \leq \frac{1}{T}\sum_t \E\norm{\vx_t-\vy_t}^2 \,.
 \end{align}
 Now let us combine above results to upper bound $(\eps,\delta)$-deviation.
     
     As a warm-up, let us first consider  SGD with $\eta =O(\nicefrac{1}{\sqrt{T}})$.
     From \eqref{conv:smoothsgd}, it follows that the convergence rate reads $\E f(\bar{\vx}_T) -f(\vx_*) \leq O(\nicefrac{1}{\sqrt{T}})$. 
     With such a choice of $\eta$, the stepsize is
     \begin{align*}
         \eta_t \equiv \frac{1}{L+\Omega(\sqrt{T})} = O(\frac{1}{\sqrt{T}}).
     \end{align*}
     Hence, for the deviation bound, using \eqref{dev:smooth} together with \eqref{dev:average}, we have 
     \begin{align*}
     \E \norm{\bar{\vx}_T -\bar{\vy}_T}^2 \leq \frac{1}{T}\sum_t \E\norm{\vx_t-\vy_t}^2 \lesssim \frac{1}{T} \sum_{t=1}^T \left[t\cdot \frac{1}{T}\cdot \delta^2\right] \lesssim \delta^2.
 \end{align*}
This shows that with  $T =\Omega(\nicefrac{1}{\eps^2})$, the $(\eps,\delta)$-deviation is $O(\delta^2)$.

In order to recover the bound in the theorem statement, we consider a mini-batch SGD.
In particular, the above calculation shows that using a mini-batch of size $b$ at each iteration, it follows that with $O(\frac{b}{\eps^2})$ gradient queries, the deviation is upper bounded by $O(\frac{\delta^2}{b})$.
This precisely corresponds to the $(\eps,\delta)$-deviation bound of $O(\frac{\delta^2}{\eps^2 T})$.

An alternative way is to let the learning rate $\eta_t=1/(\eps T)$.
It then follows from \eqref{conv:smoothsgd} that 
\begin{align*}
    \E f(\bar{\vx}_T)-f(\vx_*)\le O\left(\frac{1}{T}+\eps+\frac{\delta^2}{\eps T}\right)=O(\eps),
\end{align*}
since $T=\Omega(1/\eps^2)$.
Moreover, \eqref{dev:smooth} and \eqref{dev:average} imply
\begin{align*}
    \E \norm{\bar{\vx}_T -\bar{\vy}_T}^2 \leq \frac{1}{T}\sum_t \E\norm{\vx_t-\vy_t}^2 \leq \frac{1}{T}\sum_{t=1}^T\left[t\cdot\delta^2\cdot\frac{1}{\eps^2T^2}\right]\le\frac{\delta^2}{T\eps^2}\,,
\end{align*}
which is the desired upper bound.
  \end{proof}

\subsection{Non-stochastic gradient errors}  
\label{pf:ub_smooth_det}
 
\begin{restatable}{theorem}{ubdetsmooth}{\bf (Upper Bound)} \label{thm:ub_det_smooth}
For $L=O(1)$ and $D=O(1)$, let $f$ be an $L$-smooth convex cost function whose optimum lies in a ball of radius $D$. 
Let $\eps>0$ and $\delta>0$ are such that $\delta \leq \frac{\eps}{2LD}$. Let $T$ be a given number of iterations.
Under the non-stochastic inexact gradient model, there exists a \sfa{} algorithm whose $(\eps,\delta)$-deviation is $O(\frac{\delta^2}{\eps^2})$, provided that $T=\Omega(\frac{1}{\eps})$.
\end{restatable}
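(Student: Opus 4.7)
The plan is to analyze projected gradient descent onto a Euclidean ball $B$ of radius $R = \Theta(D)$ containing the optimum $\vx^*$, with constant step size $\eta = \Theta(1/L)$ for $T = \Omega(1/\eps)$ iterations; the output is either the last iterate or the average $\bar{\vx}_T$. Writing $e_t := g(\vx_t) - \nabla f(\vx_t)$ with $\|e_t\| \leq \delta$, I need to verify two things: that $f(\bar{\vx}_T) - f^\star \leq \eps$ and that two independent runs satisfy $\|\bar{\vx}_T - \bar{\vx}'_T\|^2 = O(\delta^2/\eps^2)$.

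For $\eps$-accuracy, I would start from the standard inequality $\|\vx_{t+1} - \vx^\star\|^2 \leq \|\vx_t - \eta g(\vx_t) - \vx^\star\|^2$ (non-expansiveness of $\Pi_B$), expand, and combine (i) convexity $\inp{\nabla f(\vx_t)}{\vx_t - \vx^\star} \geq f(\vx_t) - f^\star$, (ii) Cauchy--Schwarz $|\inp{e_t}{\vx_t - \vx^\star}| \leq 2R\delta$, and (iii) the smoothness bound $\|\nabla f(\vx_t)\|^2 \leq 2L(f(\vx_t) - f^\star)$ to absorb the quadratic $\eta^2\|g(\vx_t)\|^2$ term into the descent term. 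Telescoping and Jensen then yield $f(\bar{\vx}_T) - f^\star \leq O(1/(T\eta)) + O(R\delta) + O(\eta\delta^2)$. Taking $\eta = \Theta(1/L)$ and $T = \Omega(1/\eps)$ makes the first and third terms $O(\eps)$, while the hypothesis $\delta \leq \eps/(2LD)$ makes the middle term $O(\eps)$ as well.

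For the deviation, consider two runs with iterates $\vx_t, \vx'_t$ sharing the same initialization and having respective error sequences $e_t, e'_t$. The key lemma is that for $\eta \leq 2/L$, the exact-gradient step $T_f(\vx) := \vx - \eta\nabla f(\vx)$ is non-expansive: co-coercivity $\inp{\vx-\vy}{\nabla f(\vx) - \nabla f(\vy)} \geq L^{-1}\|\nabla f(\vx)-\nabla f(\vy)\|^2$ yields $\|T_f(\vx) - T_f(\vy)\|^2 \leq \|\vx - \vy\|^2 + \eta(\eta - 2/L)\|\nabla f(\vx) - \nabla f(\vy)\|^2 \leq \|\vx - \vy\|^2$. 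Combining this with $1$-Lipschitzness of $\Pi_B$ and the triangle inequality on the error part gives
\begin{align*}
\|\vx_{t+1} - \vx'_{t+1}\| \;\leq\; \|T_f(\vx_t) - T_f(\vx'_t)\| + \eta\|e_t - e'_t\| \;\leq\; \|\vx_t - \vx'_t\| + 2\eta\delta.
\end{align*}
Telescoping from $\vx_0 = \vx'_0$ produces $\|\vx_T - \vx'_T\| \leq 2T\eta\delta$, and Jensen preserves this for averaged iterates up to constants. With $T = \Theta(1/\eps)$ and $\eta = \Theta(1)$, the squared deviation is $O(T^2\eta^2\delta^2) = O(\delta^2/\eps^2)$.

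The main obstacle is the bookkeeping in the convergence step: choosing $\eta$ small enough that the $\eta^2\|g(\vx_t)\|^2$ term can be split via $\|g(\vx_t)\|^2 \leq 2\|\nabla f(\vx_t)\|^2 + 2\delta^2$ and absorbed using smoothness, while simultaneously ensuring the $O(R\delta)$ error term aligns with the $\delta \leq \eps/(2LD)$ hypothesis to give the target $\eps$-suboptimality. The deviation argument is by comparison clean, reducing to non-expansiveness of the gradient map plus projection, which is the whole point of introducing the projection onto $B$ (without projection one would not directly get the telescoping since the relevant operator would only be non-expansive on $B$).
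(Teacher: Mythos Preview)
Your proposal is correct and follows essentially the same approach as the paper: projected gradient descent onto the ball with $\eta = \Theta(1/L)$, a standard convergence analysis that uses the hypothesis $\delta \leq \eps/(2LD)$ to control the inexactness term, and a deviation bound via global non-expansiveness of $\vx \mapsto \vx - \eta\nabla f(\vx)$ (co-coercivity) telescoped over $T = \Theta(1/\eps)$ steps. One small correction to your closing remark: the projection is needed for the \emph{convergence} analysis (to keep $\|\vx_t - \vx^\star\|$ bounded so that the $O(R\delta)$ error term is controlled), not for the deviation step --- $T_f$ is non-expansive on all of $\R^n$ by co-coercivity, so the deviation recursion goes through without projection.
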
  
\begin{proof}
Throughout the proof, let $L$ be the smoothness constant of $f$. We consider the projected gradient descent with step size $\eta_t = \frac{1}{L}$  onto the ball of radius $D$ that contains the optimum $\vx_*$. 
It is important to note that this algorithm is a \sfa{} because the projection onto the ball of radius $D$ is a re-scaling, and hence after the projection, the coefficients $\la^{(t)}_i$ are still positive.  

The proximal inequality (e.g., \cite[Proposition 12.26]{bauschke2011convex}) implies that
 \begin{align*}
      &\norm{\vx_{t+1} -\vx_*}^2 - \norm{\vx_t-\vx_*}^2\leq  -\norm{\vx_{t+1}-\vx_t}^2 - 2\eta_t\inp{g(\vx_t)}{\vx_{t+1}-\vx_*}
 \end{align*}
 Let $\no_t$ denote the error due to the non-stochastic inexact gradient model  at iteration $t$, i.e., $\no_t:=g(\vx_t) -\nabla f(\vx_t)$.
Then we have
\begin{align*}
    &\norm{\vx_{t+1} -\vx_*}^2 - \norm{\vx_t-\vx_*}^2\\ &\leq  -\norm{\vx_{t+1}-\vx_t}^2 - 2\eta_t\inp{\nabla  f(\vx_t) + \no_t}{\vx_{t+1}-\vx_*}\\
      &\leq  -\norm{\vx_{t+1}-\vx_t}^2 +2\eta_t \inp{\nabla f(\vx_t) }{\vx_*-\vx_{t+1}} +2\delta D\\
    &= - \underbrace{\left(\norm{\vx_{t+1}-\vx_t}^2 + 2\eta_t \inp{\nabla f(\vx_t)}{\vx_{t+1}-\vx_t}\right)}_{(a)} +\underbrace{2\eta_t \inp{\nabla f(\vx_t) }{\vx_*-\vx_{t}}}_{(b)} +2\delta D \\
    & \leq  - 2\eta_t(f(\vx_{t+1})-f(\vx_t)) + 2\eta_t(f(\vx_*)-f(\vx_t)) + 2\delta D\\
    & = 2\eta_t (f(\vx_*)- f(\vx_{t+1})) + 2\delta D ,
\end{align*}
where $(a)$ is upper bounded using the $L$-smoothness together with $\eta_t=\frac{1}{L}$ as follows:
\begin{align*}
    f(\vx_{t+1}) -f(\vx_t) &\leq \inp{\nabla f(\vx_t)}{\vx_{t+1}-\vx_t} +\frac{L}{2} \norm{\vx_{t+1}-\vx_t}^2\\
    &= \inp{\nabla f(\vx_t)}{\vx_{t+1}-\vx_t} +\frac{1}{2\eta_t} \norm{\vx_{t+1}-\vx_t}^2\,,
\end{align*}
and $(b)$ is handled using convexity.

Summing this over all $t=0,1,\dots, T-1$ gives 
\begin{align*}
    \sum_{t=0}^{T-1} \frac{2}{L} (f(\vx_{t+1}) -f(\vx_*)) \leq \norm{\vx_{0}-\vx_*}^2 + 2\delta T D,
\end{align*}
 which implies the following average-iterate guarantee:
 \begin{align*}
     f(\bar{\vx}_T) -f(\vx_*) \leq \frac{L D^2}{2T} + \delta LD.
 \end{align*}
 Thus, in order to achieve $\eps$-suboptimality, we need $\Omega(1/\eps)$ iterations, since the theorem statement assumed that $\delta \leq \frac{\eps}{2LD}$.
 Next, let us bound the deviation.
 \begin{lemma}\label{lem:contract}
Suppose that $f$ is $L$-smooth.
Let $\no_t$ and $\no_t'$ denote noises in the gradients. 
If  $\eta_t\leq \frac{2}{L}$, then the following one-step deviation inequality holds
\begin{align*}
     \norm{\vx_{t}-\eta_t (\nabla f(\vx_t)+\no_t)- (\vx'_{t}-\eta_t (\nabla f(\vx'_t)+\no'_t)) } \leq \norm{\vx_t-\vx_t'}+\eta_t\norm{\no_t} +\eta_t\norm{\no_t'}. \end{align*}
 \end{lemma}
 \begin{proof}  The proof follows from the following inequality:
 \begin{align*}
     \norm{\vx_{t+1} -\vx_{t+1}'}  &\leq  \norm{\vx_t -\vx_t' -\eta_t(g(\vx_t) -g(\vx_t')) }\\
     &\leq \norm{\vx_t -\vx_t' -\eta_t({\nabla} f(\vx_t) -{\nabla} f(\vx_t')) } +\eta_t \norm{\no_t} +\eta_T\norm{\no_t}'\\
    &\overset{(a)}{\leq}     \norm{\vx_t -\vx_t'} +\eta_t\norm{\no_t} +\eta_t\norm{\no_t}'\,,
 \end{align*}
 Here ($a$) follows from the following fact:
 \begin{align*}
    & \norm{\vx_t -\vx_t' -\eta_t({\nabla} f(\vx_t) -{\nabla} f(\vx_t')) }^2\\
     \quad &= \norm{\vx_t -\vx_t'}^2 -2\eta_t\inp{\vx_t-\vx_t'}{{\nabla} f(\vx_t) -{\nabla} f(\vx_t')} +\eta_t^2 \norm{{\nabla} f(\vx_t) -{\nabla} f(\vx_t')}^2\\
     &\overset{(b)}{\leq} \norm{\vx_t -\vx_t'}^2 -\frac{2}{L}\eta_t \norm{{\nabla} f(\vx_t) -{\nabla} f(\vx_t')}^2  +\eta_t^2 \norm{{\nabla} f(\vx_t) -{\nabla} f(\vx_t')}^2\\
     &\overset{(c)}{\leq} \norm{\vx_t-\vx_t'}^2\,,
 \end{align*}
 where $(b)$ is due to the fact that for a $L$-smooth and convex function $f$, it holds that $$\frac{1}{L}\norm{\nabla f(\vx)-\nabla f(\vy)}^2 \leq \inp{\nabla f(\vx) -\nabla f(\vy)}{\vx-\vy}\quad \text{for any $\vx,\vy$,}$$
 and $(c)$ is because $\eta_t^2 \leq \frac{2}{L}\eta_t$.
 \end{proof}
 Now given the convergence rate and the deviation inequality, we are ready to prove the desired upper bound on ($\eps,\delta$)-deviation. From the triangle inequality, we get 
 \begin{align*}
 \norm{\bar{\vx}_T- \bar{\vx}'_T}
    &= \norm{\frac{1}{T}\sum_{t=1}^T \vx_t - \frac{1}{T}\sum_{t=1}^T \vx_t'} \leq \frac{1}{T} \sum_{t=1}^T \norm{\vx_t-\vx_t'}\\
    &\leq \frac{1}{T} \sum_{t=1}^T O(t\delta) \leq O(T\delta)\,,
 \end{align*}
 where the second line follows from Lemma~\ref{lem:contract}.
 Thus, the $(\eps,\delta)$-deviation is bounded by $O(\frac{\delta^2}{\eps^2})$ using the averaged iterate with $T=\Theta(1/\eps)$. \end{proof} 
 
 One limitation of \autoref{thm:ub_det_smooth} is that it requires the optimum to lie in a bounded domain. 
 Next we show that without this requirement, the gradient descent iterate is still bounded when it first attains $\eps$-accuracy.
 
\begin{theorem} \label{lem:without_proj}
 Suppose $f$ is $L$-smooth with optimum $x^*$.
 Let $\eps,\delta>0$ be given such that $\eps\le2L\|\vx_0-\vx_*\|^2$ and $\delta\le\frac{\eps}{\|\vx_0-\vx_*\|}$.
 Consider gradient descent with a constant learning rate $\eta=\frac{1}{L}$.
 Under the non-stochastic inexact gradient model, for the first iterate $\vx_T$ with $f(\vx_T)-f(\vx_*)\le\eps$, it holds that $\|\vx_T-\vx_*\|\le2\|\vx_0-\vx_*\|$.
\end{theorem}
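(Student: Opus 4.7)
The plan is to combine two complementary estimates: a non-expansivity bound controlling the per-step drift of $\|\vx_t-\vx_*\|$ under inexact gradients, and a convergence-rate bound controlling the first time $T$ at which $\eps$-accuracy is attained. Writing $r_0 := \|\vx_0-\vx_*\|$ and $s_t := f(\vx_t) - f(\vx_*)$, the conclusion $\|\vx_T-\vx_*\|\le 2r_0$ will follow by composing a drift bound $\|\vx_T-\vx_*\|\le r_0+T\delta/L$ with a convergence-time bound $T\le Lr_0/\delta$; the assumption $\delta\le\eps/r_0$ is precisely what makes these two bounds line up.

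For the drift estimate I would reuse the computation from \autoref{lem:contract} (specialized to $\vx_t'=\vx_*$ and $\no_t'=0$): since $f$ is $L$-smooth and convex, the gradient map $\vx\mapsto\vx-\frac{1}{L}\nabla f(\vx)$ is non-expansive and has $\vx_*$ as a fixed point, so writing the inexact update as a perturbation of the exact one and applying the triangle inequality yields $\|\vx_{t+1}-\vx_*\|\le\|\vx_t-\vx_*\|+\delta/L$. Telescoping produces the advertised $\|\vx_T-\vx_*\|\le r_0+T\delta/L$.

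For the convergence-time estimate I would set up an induction on $t\le T$ that simultaneously maintains $\|\vx_t-\vx_*\|\le 2r_0$. Under this hypothesis, the noisy descent inequality $s_{t+1}\le s_t-\|\nabla f(\vx_t)\|^2/(2L)+\delta^2/(2L)$ (already established inside the proof of \autoref{thm:ub_det_smooth}), combined with the convexity lower bound $\|\nabla f(\vx_t)\|\ge s_t/\|\vx_t-\vx_*\|\ge s_t/(2r_0)$, yields the logistic-type recursion $s_{t+1}\le s_t-s_t^2/(8Lr_0^2)+\delta^2/(2L)$. Standard manipulation of such a recursion produces an $O(Lr_0^2/t)$ convergence rate down to a noise floor of order $r_0\delta$; since $\delta\le\eps/r_0$ keeps this floor below $\eps$, the first iterate attaining $\eps$-accuracy must arrive within $T=O(Lr_0^2/\eps)\le Lr_0/\delta$ steps, and plugging this into the drift bound gives $\|\vx_T-\vx_*\|\le r_0+(Lr_0/\delta)(\delta/L)=2r_0$.

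The main obstacle will be closing the circular dependency: the convergence-rate estimate needs $\|\vx_t-\vx_*\|\le 2r_0$, the drift estimate converts a bound on $T$ into a bound on $\|\vx_T-\vx_*\|$, and the bound on $T$ itself depends on the convergence rate. I would resolve this by a first-exit-time argument, defining $t^\star=\min\{t\le T:\|\vx_t-\vx_*\|>2r_0\}$, running the descent analysis on the interval $[0,t^\star-1]$, and deriving a contradiction with $t^\star\le T$ whenever $t^\star<T$; this forces $t^\star=T$ and hence $\|\vx_t-\vx_*\|\le 2r_0$ for every $t\le T$, including $t=T$. A secondary nuisance will be pinning down the absolute constants so that the noise floor is genuinely below $\eps$, which is where the joint role of the two standing hypotheses $\delta\le\eps/r_0$ and $\eps\le 2Lr_0^2$ becomes essential.
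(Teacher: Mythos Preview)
Your plan is genuinely different from the paper's, and as written it has a gap that cannot be repaired under the stated hypotheses. The first-exit-time loop requires the two estimates to be consistent at some fixed radius $Kr_0$: under the running hypothesis $\|\vx_t-\vx_*\|\le Kr_0$, your descent recursion becomes $s_{t+1}\le s_t - s_t^2/(2LK^2r_0^2) + \delta^2/(2L)$, whose convergence time (even ignoring the noise term) is of order $2K^2 Lr_0^2/\eps$; feeding this into the drift bound and using $\delta\le\eps/r_0$ yields $\|\vx_T-\vx_*\| \le (1+2K^2)r_0$, which exceeds $Kr_0$ for every $K\ge 1$. So whatever radius you postulate, the resulting bound on $T$ is too weak to keep the iterates inside that radius, and the induction never closes. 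A second, independent obstacle is the noise floor: the fixed point of your recursion sits at $2r_0\delta$, and with only $\delta\le\eps/r_0$ this can be as large as $2\eps$, so the upper bound on $s_t$ need not ever dip below $\eps$ and hence gives no finite bound on $T$ at all. (As a side remark, the function-value descent inequality you invoke is not what is established in the proof of \autoref{thm:ub_det_smooth}; that proof uses a distance-based inequality.)

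The paper avoids both problems by never bounding $T$. It works with the one-step inequality $D_{t+1}^2 - D_t^2 \le \tfrac{2}{L}\bigl(f(\vx_*)-f(\vx_{t+1})\bigr) + \tfrac{2\delta}{L}D_{t+1}$, where $D_t=\|\vx_t-\vx_*\|$. For $t+1<T$ one has $f(\vx_{t+1})-f(\vx_*)>\eps$, which rearranges to $h(D_{t+1})<0$ with $h(z)=z^2-\tfrac{2\delta}{L}z+\tfrac{2\eps}{L}-D_t^2$. Under the inductive hypothesis $D_t\le D_0$, the assumption $\delta D_0\le\eps$ gives $h(D_t)\ge 0$, and a short check (using $\eps\le 2LD_0^2$) shows $D_t$ lies to the right of the vertex of $h$; together these force $D_{t+1}\le D_t$. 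Thus $D_t$ is \emph{monotone non-increasing} for all $t\le T-1$, and $D_T\le 2D_0$ then follows from a single crude step bound at time $T-1$. This monotonicity is the key idea your plan is missing.
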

\begin{proof}
  Let $D_t:=\|\vx_t-\vx_*\|$.
  Following the proof of \autoref{thm:ub_det_smooth} (which still holds when the domain is unbounded), we can show that
  \begin{align}\label{eq:smooth_iter}
    D_{t+1}^2-D_t^2\le2\eta(f(\vx_*)-f(\vx_{t+1}))+2\eta\delta D_{t+1}.
  \end{align}
  Let $T$ denote the first step with $f(\vx_T)\le f(\vx_*)+\eps$.
  We claim that for all $0\le t\le T-2$,
  \begin{align}\label{eq:dist_dec}
    D_{t+1}\le D_t\le D_0.
  \end{align}
   We will prove \eqref{eq:dist_dec} by induction. 
   Given $0\le t\le T-2$, suppose $D_t\le D_0$, and note that the definition of $T$ implies $f(\vx_{t+1})>f(\vx_*)+\eps$.
   It then follows from \eqref{eq:smooth_iter} that
   \begin{align*}
     D_{t+1}^2-2\eta\delta D_{t+1}+2\eta\eps-D_t^2<0.
   \end{align*}
   Let $h(z):=z^2-2\eta\delta z+2\eta\eps-D_t^2$.
   First, 
   \begin{align*}
     h(D_t)=-2\eta\delta D_t+2\eta\eps\ge0, 
   \end{align*}
  since $\delta D_t\le\delta D_0\le\eps$ by the condition of Theorem~\ref{lem:without_proj}.
  Moreover, $D_t$ is larger than $\eta\delta$, the minimum of $h$, because if it is not true, then 
  \begin{align*}
     f(\vx_t)-f(\vx_*)\le\frac{L}{2}D_t^2\le\frac{L\eta^2\delta^2}{2}\le\frac{L\eta^2\eps^2}{2D_0^2}=\frac{\eps^2}{2LD_0^2}\le\eps,
  \end{align*}
   which contradicts the definition of $T$.
   Since $h(D_{t+1})<0$, it then follows that $D_{t+1}\le D_t\le D_0$, and in particular $D_{T-1}\le D_0$.
   Finally, note that smoothness implies
   \begin{align*}
     \|\nabla f(\vx_{T-1})\|\le LD_{T-1}\le LD_0,
   \end{align*}
   and thus 
   \begin{align*}
     D_T\le\|\vx_T-\vx_{T-1}\|+D_{T-1}\le\eta\|\nabla f(\vx_{T-1})\|+D_{T-1}\le D_0+D_{T-1}\le2D_0.
   \end{align*}
   This completes the proof
\end{proof}

 \subsection{Inexact initialization model}
 \label{ub:init_smooth}
  \begin{restatable}{theorem}{ubinitsmooth}{\bf (Upper Bound)}
 \label{thm:ub_init_smooth}
Let $f$ be an $O(1)$-Lipschitz convex cost function. Let $\eps>0$ be a small constant, and $T$ be a given number of iterations.
Then there exists a \sfa{} algorithm whose $(\eps,\delta)$-deviation  is $O(\delta^2)$, provided that $T=\Omega(1/\eps)$.
\end{restatable}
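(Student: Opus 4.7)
The plan is to show that vanilla gradient descent with constant step size $\eta = 1/L$ (where $L = O(1)$ is the smoothness constant of $f$) simultaneously achieves $\eps$-accuracy in $T = \Omega(1/\eps)$ iterations and the claimed $O(\delta^2)$-deviation. Note that GD with a positive constant step size is manifestly an \sfa{} algorithm, with all coefficients $\lambda_i^{(t)} = \eta > 0$.

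The first ingredient is the standard smooth convex convergence bound: iterating the descent inequality $f(\vx_{t+1}) \leq f(\vx_t) - \frac{1}{2L}\|\nabla f(\vx_t)\|^2$ together with convexity yields
\[
f(\vx_T) - f(\vx_*) \leq \frac{L\,\|\vx_0 - \vx_*\|^2}{2T}\,,
\]
so that $T = \Omega(1/\eps)$ suffices to reach $\eps$-suboptimality (with the dependence on $\|\vx_0 - \vx_*\|$ absorbed into the $O(1)$ problem constants, consistent with the conventions in \autoref{sec:problem}). The same bound applies to the second run with initialization $\vx_0'$ satisfying $\|\vx_0 - \vx_0'\| \leq \delta$.

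The main (and only nontrivial) ingredient is a deviation bound. This is exactly the noise-free specialization of \autoref{lem:contract}: for any $\eta \leq 2/L$, the gradient descent map is non-expansive on a smooth convex function, i.e.,
\[
\|\vx_{t+1} - \vx_{t+1}'\|^2 \;=\; \|\vx_t - \vx_t' - \eta(\nabla f(\vx_t) - \nabla f(\vx_t'))\|^2 \;\leq\; \|\vx_t - \vx_t'\|^2\,,
\]
where the inequality uses the co-coercivity $\tfrac{1}{L}\|\nabla f(\vx) - \nabla f(\vy)\|^2 \leq \langle \nabla f(\vx) - \nabla f(\vy), \vx - \vy\rangle$ for $L$-smooth convex $f$. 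Iterating gives
\[
\|\vx_T - \vx_T'\| \;\leq\; \|\vx_0 - \vx_0'\| \;\leq\; \delta\,,
\]
so the $(\eps,\delta)$-deviation is at most $\delta^2$.

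I do not expect any serious obstacle: the proof is essentially the assembly of two standard facts (smooth convex convergence and non-expansiveness of the gradient step). The only point worth being a bit careful about is that \autoref{def:deviation} demands a sup over the two independent runs, but since the gradient oracle here is exact, the only source of variability is the initialization, which is controlled deterministically by $\|\vx_0 - \vx_0'\| \leq 2\delta$ via the triangle inequality around $\vx_0^{\text{ref}}$ (this only loses a harmless constant factor and does not affect the $\Theta(\delta^2)$ rate).
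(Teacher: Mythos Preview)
Your proposal is correct and takes essentially the same approach as the paper: invoke the non-expansiveness of the gradient step for smooth convex $f$ (the noise-free case of \autoref{lem:contract}) to propagate $\|\vx_0-\vx_0'\|\le 2\delta$ through all iterations, and rely on the standard $O(1/T)$ convergence of GD for $\eps$-accuracy. The paper's own proof is even terser (it only writes the chain of inequalities $\|\vx_T-\vx_T'\|\le\cdots\le\|\vx_0-\vx_0'\|\le 2\delta$ and omits the convergence part), so your version is if anything more complete.
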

 \begin{proof}
In view of \autoref{lem:contract}, two different runs $\vx_T,\vx_T'$ of gradient descent satisfies
\begin{align*}
    \norm{\vx_T-\vx_T'} \leq \norm{\vx_{T-1}'-\vx_{T-1}'}\leq \cdots \leq \norm{\vx_0-\vx_0'}\leq 2\delta.
\end{align*}
Hence, the statement follows.
\end{proof}

In this section, we consider smooth and strongly-convex costs.

\subsection{Stochastic inexact gradient model (strongly convex costs)}
\label{ub:smooth_sto_str}
We first show an upper bound for the stochastic inexact gradient oracle.
  \begin{restatable}{theorem}{ubstosmoothstr}{\bf (Upper Bound)}
 \label{thm:ub_sto_smooth_str}
Let $f$ be an $O(1)$-smooth $\mu$-strongly convex cost function. Let $\eps>0$ be a small constant, and $T$ be a given number of iterations.
Under the stochastic inexact gradient model, there exists a \sfa{} algorithm whose $(\eps,\delta)$-deviation   is $O\left(\frac{\delta^2}{T\mu^2}\wedge  \frac{\eps}{\mu}\right)$, provided that $T=\Omega(\frac{1}{\eps\mu})$.
\end{restatable}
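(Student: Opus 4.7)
The proof naturally splits into the two terms of the minimum. The $O(\eps/\mu)$ bound is immediate: $\eps$-accuracy combined with $\mu$-strong convexity yields $\E\|\vx_T - \vx_*\|^2 \leq 2\eps/\mu$ (and likewise for $\vx_T'$), so the triangle inequality gives $\E\|\vx_T - \vx_T'\|^2 \leq 8\eps/\mu$. This part holds for any $\eps$-accurate algorithm, so the work is all in the second term.

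For the $O(\delta^2/(T\mu^2))$ bound, the plan is to analyze SGD with the decreasing step-size schedule $\eta_t = 2/(\mu(t+t_0))$, choosing $t_0 = \Theta(L/\mu)$ large enough that $\eta_t \leq 2/(L+\mu)$ for all $t \geq 0$. Consider two independent runs $\vx_t,\vx_t'$ starting from the same initialization and sharing this schedule. Writing $g(\vx_t) = \nabla f(\vx_t) + \xi_t$ with independent, zero-mean noises of variance at most $\delta^2$, expanding $\|\vx_{t+1} - \vx_{t+1}'\|^2$ and invoking the standard smooth-and-strongly-convex contraction of the noiseless step (see, e.g., \cite{nesterov2018lectures}) yields the one-step recursion
\[
d_{t+1} \leq (1-\eta_t\mu)\, d_t + 2\eta_t^2 \delta^2, \qquad d_t := \E\|\vx_t - \vx_t'\|^2, \qquad d_0 = 0,
\]
where the factor of $2$ in front of the noise term comes from $\E\|\xi_t - \xi_t'\|^2 \leq 2\delta^2$ by independence of the two runs.

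To unroll, I would multiply through by $(t+t_0)(t+t_0-1)$ and exploit the identity $(t+t_0+1)(t+t_0-2) = (t+t_0)(t+t_0-1) - 2$ so that $a_t := (t+t_0)(t+t_0-1)\, d_t$ satisfies the near-telescoping bound $a_{t+1} \leq a_t + O(\delta^2/\mu^2)$. Summing gives $a_T = O(T\delta^2/\mu^2)$, hence $d_T = O(\delta^2/(T\mu^2))$, which is the desired deviation bound. Applying the same recursion to $D_t := \E\|\vx_t - \vx_*\|^2$ (with single-noise variance $\delta^2$ in place of $2\delta^2$) produces $D_T = O(\|\vx_0-\vx_*\|^2/T^2 + \delta^2/(\mu^2 T))$, which is $O(\eps/\mu)$ as soon as $T = \Omega(1/(\mu\eps))$ under the standing $L, \mu^{-1}, \|\vx_0-\vx_*\| = O(1)$ assumptions; this confirms $\eps$-accuracy of the chosen schedule.

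The main obstacle I anticipate is balancing reproducibility against convergence. A constant step size $\eta = \Theta(1/(T\mu))$ tuned to hit the target deviation yields a contraction factor $(1-1/T)^T \approx e^{-1}$ that is too weak to shrink the initial bias $\|\vx_0-\vx_*\|^2$ down to $O(\eps/\mu)$, while inflating $\eta$ to $\Theta(\log(1/\eps)/(T\mu))$ restores accuracy but contaminates the deviation with a spurious $\log$ factor. The decreasing $\eta_t \propto 1/t$ schedule threads this needle: early large steps absorb the initial bias while the $1/t$ tail keeps the accumulated noise at exactly the $O(\delta^2/(T\mu^2))$ level via the telescoping identity above. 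I will also need to double-check that the initial steps with $\eta_t$ capped at $2/(L+\mu)$ do not disturb the telescoping beyond absorbable constants.
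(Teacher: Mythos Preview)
Your proposal is correct and follows essentially the same route as the paper: SGD with the decreasing schedule $\eta_t \propto 1/(\mu(t+t_0))$, $t_0=\Theta(L/\mu)$, the one-step contraction $d_{t+1}\le(1-\eta_t\mu)d_t+O(\eta_t^2\delta^2)$, and the same quadratic-in-$t$ telescoping to extract the $O(\delta^2/(T\mu^2))$ bound. The only differences are cosmetic: the paper outputs a weighted average $\tilde\vx_T$ and couples one noisy run to the \emph{exact}-gradient trajectory $\{\vy_t\}$ (so the bound is first a variance bound, then converted to deviation), and it certifies $\eps$-accuracy by a function-value telescoping \`a la \cite{bubeck2014convex} rather than via the distance recursion plus smoothness; your last-iterate, two-noisy-runs version is slightly more direct but equivalent up to constants.
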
  
\begin{proof}
    The following proof is based on the proof of \citep[Theorem 6.3]{bubeck2014convex}, but here we further make use of strong convexity.
    
    Let $C\subset\R^n$ denote the domain of $f$, and assume it is convex and closed.
    We simply run stochastic gradient descent: starting from some $\vx_0\in C$, let 
    \begin{align*}
        \vx_{t+1}:=\Pi_C\left[\vx_t-\eta_tg(\vx_t)\right]\quad\textup{where}\quad\eta_t:=\frac{1}{L+1/\lambda_t}.
    \end{align*}
    We will pick a value for each $\lambda_t$ below.
    Note that $C$ can just be $\R^n$, in which case no projection is needed, but our analysis can also handle a bounded domain.
    
    Let $\vx_*$ denote the optimal solution, and suppose $f$ is $L$-smooth.
    It follows that 
    \begin{align*}
        f(\vx_{t+1})-f(\vx_t) & \le\langle\nabla f(\vx_t),\vx_{t+1}-\vx_t\rangle+\frac{L}{2}\|\vx_{t+1}-\vx_t\|^2 \\
         & =\langle g(\vx_t),\vx_{t+1}-\vx_t\rangle+\langle\nabla f(\vx_t)-g(\vx_t),\vx_{t+1}-\vx_t\rangle+\frac{L}{2}\|\vx_{t+1}-\vx_t\|^2 \\
         & \le\langle g(\vx_t),\vx_{t+1}-\vx_t\rangle+\frac{\lambda_t}{2}\|\nabla f(\vx_t)-g(\vx_t)\|^2+\frac{L+1/\lambda_t}{2}\|\vx_{t+1}-\vx_t\|^2.
    \end{align*}
    Moreover, the projection step ensures
    \begin{align*}
        \frac{1}{L+1/\lambda_t}\langle g(\vx_t),\vx_{t+1}-\vx_*\rangle & \le\langle \vx_t-\vx_{t+1},\vx_{t+1}-\vx_*\rangle \\
         & =\frac{1}{2}\left(\|\vx_t-\vx_*\|^2-\|\vx_t-\vx_{t+1}\|^2-\|\vx_{t+1}-\vx_*\|^2\right).
    \end{align*}
    Consequently,
    \begin{align*}
        f(\vx_{t+1})-f(\vx_t)&\le\langle g(\vx_t),\vx_*-\vx_t\rangle+\frac{\lambda_t}{2}\|\nabla f(\vx_t)-g(\vx_t)\|^2\\
        &\qquad +\frac{L+1/\lambda_t}{2}\left(\|\vx_t-\vx_*\|^2-\|\vx_{t+1}-\vx_*\|^2\right).
    \end{align*}
    Taking expectation with respect to $g(\vx_t)$, we have 
    \begin{align*}
        \E[f(\vx_{t+1})]\le f(\vx_t)+\langle\nabla f(\vx_t),\vx_*-\vx_t\rangle+\frac{\lambda_t}{2}\delta^2+\frac{L+1/\lambda_t}{2}\E\left[\|\vx_t-\vx_*\|^2-\|\vx_{t+1}-\vx_*\|^2\right].
    \end{align*}
    Further invoking strong convexity, we have 
    \begin{align}\label{eq:smooth_sc_cost}
        \E[f(\vx_{t+1})]\le f(\vx_*)-\frac{\mu}{2}\|\vx_t-\vx_*\|^2+\frac{\lambda_t\delta^2}{2}+\frac{L+1/\lambda_t}{2}\E\left[\|\vx_t-\vx_*\|^2-\|\vx_{t+1}-\vx_*\|^2\right].
    \end{align}
    Pick a small enough $k$ which also satisfies $k\ge4L/\mu$, and let 
    \begin{align*}
        \lambda_t=\frac{2}{(t+k)\mu-2L}.
    \end{align*}
    It follows that $\lambda_t>0$ by construction, and also
    \begin{align*}
        L+\frac{1}{\lambda_t}=\frac{t+k}{2}\mu,\quad\textup{and}\quad L+\frac{1}{\lambda_t}-\mu=\frac{t+k-2}{2}\mu,
    \end{align*}
    and that 
    \begin{align*}
        (t+k-1)\lambda_t=\frac{2}{\mu}\cdot\frac{t+k-1}{t+k-2L/\mu}\le\frac{2}{\mu}\cdot\frac{t+k-1}{(t+k)/2}\le\frac{4}{\mu},
    \end{align*}
    since by the definition of $k$, we have $2L/\mu\le k/2\le(t+k)/2$.
    Therefore if we multiply both sides of \eqref{eq:smooth_sc_cost} by $(t+k-1)$, we get
    \begin{align*}
        (t+k-1)\E\left[f(\vx_{t+1})-f(\vx_*)\right]\le & \ \frac{(t+k-1)(t+k-2)}{4}\mu\E[\|\vx_t-\vx_*\|^2] \\
         & \ -\frac{(t+k)(t+k-1)}{4}\mu\E[\|\vx_{t+1}-\vx_*\|^2]+\frac{2\delta^2}{\mu}.
    \end{align*}
    Now taking the sum from $t=0$ to $T-1$, we have 
    \begin{align*}
        \sum_{t=0}^{T-1}(t+k-1)\E\left[f(\vx_{t+1})-f(\vx_*)\right]\le\frac{(k-1)(k-2)\mu}{4}\|\vx_0-\vx_*\|^2+\frac{2\delta^2T}{\mu}.
    \end{align*}
    Define 
    \begin{align*}
        \tilde{\vx}_T:=\sum_{t=0}^{T-1}\frac{t+k-1}{\sum_{j=0}^{T-1}(j+k-1)}\vx_{t+1}\,.
    \end{align*}
    Then, we have 
    \begin{align*}
        \E\left[f(\tilde{\vx}_T)-f(\vx_*)\right]\le O\left(\frac{(k-1)(k-2)\mu}{4T^2}\|\vx_0-\vx_*\|^2+\frac{2\delta^2}{T\mu}\right).
    \end{align*}
    Since $k=\Theta(L/\mu)$, it follows that as long as $T=\Omega(\frac{1}{\eps\mu})$, we have 
    \begin{align*}
        \E\left[f(\tilde{\vx}_T)-f(\vx_*)\right]\le\eps.
    \end{align*}
    
    Next we analyze the deviation bound. 
    Similarly to the proof of \autoref{thm:ub_sto_smooth}, let $\{\vx_t\}$ denote GD iterates with stochastic inexact gradients, and let $\{\vy_t\}$ denote GD iterates with exact gradients, we can show
    \begin{align*}
        \E\left[\|\vx_{t+1}-\vy_{t+1}\|^2\right]\le & \ \|\vx_t-\vy_t\|^2-2\eta_t\langle\vx_t-\vy_t,\nabla f(\vx_t)-\nabla f(\vy_t)\rangle+\eta_t^2\|\nabla f(\vx_t)-\nabla f(\vy_t)\|^2 \\
         & \ +\eta_t^2\E\left[\|\nabla f(\vx_t)-\nabla g(\vx_t)\|^2\right].
    \end{align*}
    Next we need the following lemma.
    \begin{lemma}\label{lem:contract_sc}
        Suppose $f$ is $L$-smooth and $\mu$-strongly convex. 
        For $\eta\le1/L$, it holds for any $\vx,\vy$ that 
        \begin{align*}
        \norm{\vx -\vy -\eta({\nabla} f(\vx) -{\nabla} f(\vy)) }^2 & \le\left(1-\eta\mu\right)\|\vx-\vy\|^2.
    \end{align*}
    \end{lemma}
    \begin{proof}
        First we have
        \begin{align*}
             & \ \norm{\vx -\vy -\eta({\nabla} f(\vx) -{\nabla} f(\vy)) }^2 \\
            = & \ \norm{\vx -\vy}^2 -2\eta\inp{\vx-\vy}{{\nabla} f(\vx) -{\nabla} f(\vy)} +\eta^2 \norm{{\nabla} f(\vx) -{\nabla} f(\vy)}^2 \\
            \le & \ \|\vx-\vy\|^2-2\eta\langle \vx-\vy,\nabla f(\vx)-\nabla f(\vy)\rangle+\eta\frac{1}{L}\cdot L\langle \vx-\vy,\nabla f(\vx)-\nabla f(\vy)\rangle \\
            = & \ \|\vx-\vy\|^2-\eta\langle \vx-\vy,\nabla f(\vx)-\nabla f(\vy)\rangle,
        \end{align*}
        where the inequality is due to smoothness and $\eta\le1/L$.
        Strong convexity then implies 
        \begin{align*}
            \norm{\vx -\vy -\eta({\nabla} f(\vx) -{\nabla} f(\vy)) }^2 \le \|\vx-\vy\|^2-\eta\mu\|\vx-\vy\|^2.
        \end{align*}
    \end{proof}
    
    Note that in the current setting, $\eta_t=1/(L+1/\lambda_t)\le1/L$, therefore we can invoke  \autoref{lem:contract_sc} and obtain
    \begin{align*}
        \E\left[\|\vx_{t+1}-\vy_{t+1}\|^2\right]\le(1-\eta_t\mu)\E\left[\|\vx_t-\vy_t\|^2\right]+\eta_t^2\delta^2,
    \end{align*}
    which further implies
    \begin{align*}
        \E\left[\|\vx_T-\vy_T\|^2\right]\le\delta^2\sum_{t=0}^{T-1}\eta_t^2\prod_{j=t+1}^{T-1}(1-\eta_j\mu),
    \end{align*}
    since $\vx_0=\vy_0$.
    Note that 
    \begin{align*}
        \eta_t=\frac{2}{(t+k)\mu},\quad\textup{and}\quad1-\eta_t\mu=\frac{t+k-2}{t+k},
    \end{align*}
    therefore 
    \begin{align*}
        \E\left[\|\vx_T-\vy_T\|^2\right] & \le\delta^2\sum_{t=0}^{T-1}\frac{4}{(t+k)^2\mu^2}\frac{(t-1+k)(t+k)}{(T-2+k)(T-1+k)} \\
         & \le\delta^2\sum_{t=0}^{T-1}\frac{4}{\mu^2}\frac{1}{(T-2+k)(T-1+k)} \\
         & \le\delta^2\sum_{t=0}^{T-1}\frac{4}{\mu^2T^2}\le\frac{4\delta^2}{T\mu^2}.
    \end{align*}
    Now define 
    \begin{align*}
        \tilde{\vy}_T:=\sum_{t=0}^{T-1}\frac{t+k-1}{\sum_{j=0}^{T-1}(j+k-1)}\vy_{t+1}.
    \end{align*}
    Since $\tilde{\vx}_T$ and $\tilde{\vy}_T$ are weighted averages of $\vx_t$ and $\vy_t$ respectively, we have
    \begin{align*}
        \E\left[\|\tilde{\vx}_T-\tilde{\vy}_T\|^2\right]\le\frac{4\delta^2}{T\mu^2},
    \end{align*}
    Moreover, since $\tilde{\vy}_T$ is deterministic, an $O(\frac{\delta^2}{T\mu^2})$ deviation bound also follows.
\end{proof}

\subsection{Non-stochastic inexact gradient model (strongly convex costs)}
\label{ub:smooth_det_str}

Next we consider the non-stochastic inexact gradient oracle.
\begin{restatable}{theorem}{ubdetsmoothstr}{\bf (Upper Bound)} \label{thm:ub_det_smooth_str}
For $L=O(1)$ and $D=O(1)$, let $f$ be an $L$-smooth $\mu$-strongly convex cost function whose optimum lies in a ball of radius $D$. 
Let $\eps>0$ and $\delta>0$ are such that $\delta \leq \frac{\eps}{2LD}$, and let $T$ be a given number of iterations.
Under the non-stochastic inexact gradient model, there exists a \sfa{} algorithm whose $(\eps,\delta)$-deviation is $O\left(\frac{\delta^2}{\mu^2}\wedge \frac{\eps}{\mu}\right)$, provided that $T=\Omega(1/\eps)$.
\end{restatable}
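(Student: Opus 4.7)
The plan is to analyze projected gradient descent with constant stepsize $\eta = 1/L$, projecting onto a ball of radius $D$ (centered, say, at the origin) that contains the optimum. As observed in the proof of \autoref{thm:ub_det_smooth}, this projection is a re-scaling and preserves the FOI structure. The argument splits into a convergence claim (iterate reaches $\eps$-suboptimality) and a deviation claim, which is the main object of interest.

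For convergence, I would adapt the analysis of \autoref{thm:ub_det_smooth}, but additionally exploit strong convexity. Specifically, a standard one-step inequality combining smoothness, $\mu$-strong convexity, nonexpansiveness of the projection, and the $\delta$-bounded gradient perturbation yields
\begin{align*}
  \|\vx_{t+1}-\vx_*\|^2 \;\le\; (1-\eta\mu)\,\|\vx_t-\vx_*\|^2 + O(\eta\delta D),
\end{align*}
so that unrolling gives $f(\vx_T) - f(\vx_*) \le O(L D^2) (1-\eta\mu)^T + O(\delta D)$. Since the hypothesis $\delta \le \eps/(2LD)$ forces $\delta D = O(\eps)$, and since $T = \Omega(1/\eps) \gg (L/\mu)\log(1/\eps)$ kills the geometric term, we get $f(\vx_T) - f(\vx_*) \le \eps$.

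For the deviation, I would invoke \autoref{lem:contract_sc}. Let $\vx_t,\vx'_t$ denote two runs starting at the same $\vx_0$ driven by perturbations $\no_t,\no'_t$ of norm at most $\delta$. Because the projection is $1$-Lipschitz, \autoref{lem:contract_sc} applied before projection gives the one-step bound
\begin{align*}
  \|\vx_{t+1}-\vx'_{t+1}\| \;\le\; \sqrt{1-\eta\mu}\,\|\vx_t-\vx'_t\| + 2\eta\delta.
\end{align*}
Iterating this geometric recursion from $\vx_0 = \vx'_0$ and using $1-\sqrt{1-\eta\mu} \ge \eta\mu/2$, the accumulated perturbation is bounded by $2\eta\delta / (1-\sqrt{1-\eta\mu}) \le 4\delta/\mu$, so $\|\vx_T-\vx'_T\|^2 = O(\delta^2/\mu^2)$.

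The alternative $O(\eps/\mu)$ side of the minimum is essentially free from strong convexity: $\eps$-accuracy forces $\|\vx_T-\vx_*\|^2 \le 2\eps/\mu$ and similarly for $\vx'_T$, so $\|\vx_T-\vx'_T\|^2 \le 8\eps/\mu$ by triangle inequality. Taking the minimum of the two yields the claimed $O(\delta^2/\mu^2 \wedge \eps/\mu)$ bound. I do not expect any serious obstacles here; the only delicate step is propagating the contraction through the projection operator, but that follows at once from its nonexpansiveness combined with \autoref{lem:contract_sc}.
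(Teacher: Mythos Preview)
Your proposal is correct and follows essentially the same strategy as the paper for the deviation bound: both apply \autoref{lem:contract_sc} (together with nonexpansiveness of the projection) to obtain the one-step contraction $\|\vx_{t+1}-\vx'_{t+1}\| \le \sqrt{1-\eta\mu}\,\|\vx_t-\vx'_t\| + 2\eta\delta$ and then sum the resulting geometric series to reach $O(\delta/\mu)$. The only real difference is in the choice of output and the convergence argument: the paper outputs the averaged iterate $\bar\vx_T$ and simply invokes \autoref{thm:ub_det_smooth} (which already handles the non-stochastic inexact oracle) for $\eps$-accuracy, whereas you output the last iterate and argue convergence directly from a strongly-convex one-step inequality. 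Your explicit derivation of the $O(\eps/\mu)$ alternative from $\|\vx_T-\vx_*\|^2\le 2\eps/\mu$ is something the paper leaves implicit.
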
  

\begin{proof}
    We run projected gradient descent with a constant learning rate $\eta=1/L$.
    For the upper bound on excess error, we simply invoke \autoref{thm:ub_det_smooth}: as long as $T=\Omega(1/\eps)$, it holds that 
    \begin{align*}
        f(\bar{\vx}_T)\le\frac{1}{T}\sum_{t=1}^Tf(\vx_t)\le f(\vx_*)+\eps.
    \end{align*}
    
    To bound the deviation, we follow a similar analysis as in the proof of \autoref{lem:contract}.
    Consider two gradient descent runs $\{\vx_t\}$ and $\{\vx'_t\}$, and let 
    $g(\vx_t)=\nabla f(\vx_t)+\no_t$, and $g(\vx_t')=\nabla f(\vx_t')+\no_t'$.
    First we have
    \begin{align*}
        \norm{\vx_{t+1} -\vx_{t+1}'}  &\leq  \norm{\vx_t -\vx_t' -\eta_t(g(\vx_t) -g(\vx_t')) }\\
         &\leq \norm{\vx_t -\vx_t' -\eta_t({\nabla} f(\vx_t) -{\nabla} f(\vx_t')) } +\eta_t \norm{\no_t} +\eta_t\norm{\no_t'} \\
         & \le \norm{\vx_t -\vx_t' -\eta_t({\nabla} f(\vx_t) -{\nabla} f(\vx_t')) } + \frac{2\delta}{L}.
    \end{align*}
    Moreover, \autoref{lem:contract_sc} implies 
    \begin{align*}
        \norm{\vx_t -\vx_t' -\eta_t({\nabla} f(\vx_t) -{\nabla} f(\vx_t')) }\le\sqrt{1-\frac{\mu}{L}}\|\vx_t-\vx'_t\|.
    \end{align*}
    Therefore
    \begin{align*}
        \|\vx_{t+1}-\vx'_{t+1}\|\le\sqrt{1-\frac{\mu}{L}}\|\vx_t-\vx'_t\|+\frac{2\delta}{L},
    \end{align*}
    and for all $t\ge1$,
    \begin{align*}
        \|\vx_t-\vx'_t\| & \le\left(1-\frac{\mu}{L}\right)^{t/2}\|\vx_0-\vx'_0\|+\frac{2\delta}{L}\cdot\frac{1}{1-\sqrt{1-\mu/L}} \\
         & =\left(1-\frac{\mu}{L}\right)^{t/2}\|\vx_0-\vx'_0\|+\frac{2\delta}{L}\cdot\frac{1+\sqrt{1-\mu/L}}{\mu/L} \\
         & \le\left(1-\frac{\mu}{L}\right)^{t/2}\|\vx_0-\vx'_0\|+\frac{4\delta}{\mu}.
    \end{align*}
    Finally,
    \begin{align*}
        \|\bar{\vx}_T-\bar{\vx}'_T\|&\le\frac{1}{T}\sum_{t=1}^T\|\vx_t-\vx'_t\|\le\frac{4\delta}{\mu}+\frac{1}{T}\underbrace{\|\vx_0-\vx'_0\|}_{=0}\frac{1}{1-\sqrt{1-\mu/L}} =O\left(\frac{\delta}{\mu} \right).
    \end{align*}
    This completes the proof.
\end{proof}

\subsection{Inexact initialization model (strongly convex costs)}
\label{ub:smooth_init_str}

\begin{restatable}{theorem}{ubinitsmoothstr}{\bf (Upper Bound)}
 \label{thm:ub_init_smooth_str}
Let $f$ be an $L$-smooth $\mu$-strongly convex cost function. Let $\eps>0$ be a small constant, and $T$ be a given number of iterations.
Then there exists a \sfa{} algorithm whose $(\eps,\delta)$-deviation  is $O(\exp(-\mu T/L)\delta^2 \wedge \frac{\eps}{\mu})$.
\end{restatable}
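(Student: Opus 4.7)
The plan is to run exact gradient descent with constant stepsize $\eta = 1/L$ from each of the two (possibly different) initializations $\vx_0, \vx_0'$ produced by the $\delta$-bounded initialization oracle, and combine two inequalities: a geometric contraction of iterate-deviation, and a bound coming from strong convexity applied to the $\eps$-accurate outputs.

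For the convergence part, I would invoke the standard analysis of gradient descent on an $L$-smooth $\mu$-strongly convex function with stepsize $\eta=1/L$: one has $f(\vx_t)-f(\vx_*) \leq (1-\mu/L)^t\,(f(\vx_0)-f(\vx_*))$, so $T = \Omega(\frac{L}{\mu}\log\frac{1}{\eps})$ iterations suffice for $\eps$-accuracy. This also shows gradient descent with constant $\eta = 1/L$ is a valid \sfa{} algorithm.

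For the deviation part, I would apply \autoref{lem:contract_sc} (already proved earlier in the excerpt): with $\eta = 1/L$, the map $\vx \mapsto \vx - \eta\nabla f(\vx)$ is $\sqrt{1-\mu/L}$-contractive. Since both runs use exact gradients (no oracle errors in this setting) and the same deterministic update rule, applying the lemma $T$ times to the pair $(\vx_t, \vx_t')$ gives
\begin{align*}
\|\vx_T - \vx_T'\|^2 \leq (1-\mu/L)^{T}\,\|\vx_0 - \vx_0'\|^2 \leq \exp(-\mu T/L)\,\delta^2.
\end{align*}

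For the $\wedge\,\eps/\mu$ piece, I would note that by $\mu$-strong convexity, any $\eps$-accurate point $\vx$ satisfies $\|\vx-\vx_*\|^2 \leq 2\eps/\mu$, so the triangle inequality yields $\|\vx_T-\vx_T'\|^2 \leq 2(\|\vx_T-\vx_*\|^2 + \|\vx_T'-\vx_*\|^2) = O(\eps/\mu)$ unconditionally on $T$. Taking the minimum of the two estimates gives the claimed $O(\exp(-\mu T/L)\delta^2 \wedge \eps/\mu)$ bound. There is no real obstacle here: both ingredients are already available in the paper (\autoref{lem:contract_sc} for contraction, strong convexity for the second bound), and the only thing to verify is that gradient descent with $\eta=1/L$ fits the \sfa{} template, which is immediate.
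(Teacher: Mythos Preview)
Your proposal is correct and follows essentially the same approach as the paper: both invoke \autoref{lem:contract_sc} with $\eta=1/L$ and iterate the contraction $T$ times to obtain $\|\vx_T-\vx_T'\|^2 \le (1-\mu/L)^T\|\vx_0-\vx_0'\|^2 \le e^{-\mu T/L}\cdot O(\delta^2)$. The paper's proof is in fact terser than yours---it omits the convergence-rate discussion and the explicit $\eps/\mu$ argument from strong convexity---so your added details are welcome rather than superfluous (one tiny nit: $\|\vx_0-\vx_0'\|\le 2\delta$ rather than $\delta$, since each initialization is within $\delta$ of the common reference point, but this is absorbed in the $O(\cdot)$).
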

 
\begin{proof}
    Let $\vx_0$, $\vx_0'$ denote two initial iterates.
     \autoref{lem:contract_sc} implies
    \begin{align*}
        \|\vx_{t+1}-\vx'_{t+1}\|^2=\|\vx_t-\vx'_t-\eta_t(\nabla f(\vx_t)-\nabla f(\vx'_t))\|^2\le\left(1-\frac{\mu}{L}\right)\|\vx_t-\vx'_t\|^2,
    \end{align*}
    and
    \begin{align*}
        \|\vx_T-\vx'_T\|^2\le\left(1-\frac{\mu}{L}\right)^T\|\vx_0-\vx'_0\|^2\le e^{-\mu T/L}\|\vx_0-\vx'_0\|^2.
    \end{align*}
    This completes the proof.
\end{proof}

\section{Proof of upper bounds (nonsmooth costs)}
\label{sec:ub_nonsmooth}
\subsection{Stochastic inexact gradient model}
\label{pf:ub_nonsmooth_sto}

 \begin{restatable}{theorem}{ubstononsmooth}{\bf (Upper Bound)}
 \label{thm:ub_sto_nonsmooth}
Let $f$ be an $O(1)$-Lipschitz convex cost function. Let $\eps>0$ be a small constant, and $T$ be a given number of iterations.
Under the stochastic inexact gradient model, there exists a \sfa{} algorithm whose $(\eps,\delta)$-deviation   is $O(\frac{1}{T\eps^2})$, provided that $T=\Omega(1/\eps^2)$.
\end{restatable}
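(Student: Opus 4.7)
The plan is to show that standard projected SGD with a carefully chosen \emph{small} constant step size achieves the bound. Specifically, run SGD $\vx_{t+1}=\vx_t-\eta g(\vx_t)$ from $\vx_0=\vzero$ with $\eta=\Theta(\tfrac{1}{\eps T})$, and output the averaged iterate $\bar{\vx}_T=\tfrac{1}{T}\sum_{t=1}^T\vx_t$. First I would verify the suboptimality guarantee: since $f$ is $O(1)$-Lipschitz, $\|\nabla f(\vx)\|\le G=O(1)$ and the stochastic oracle satisfies $\E\|g(\vx)\|^2\le G^2+\delta^2=O(1)$. The standard one-step inequality $\E\|\vx_{t+1}-\vx_*\|^2\le\E\|\vx_t-\vx_*\|^2-2\eta\E\langle\nabla f(\vx_t),\vx_t-\vx_*\rangle+\eta^2 O(1)$ together with convexity yields $\E f(\bar\vx_T)-f(\vx_*)\le\tfrac{D^2}{2\eta T}+\tfrac{\eta}{2}\cdot O(1)=O(\eps)$ once $T=\Omega(1/\eps^2)$.

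The core task is the deviation bound. Consider two independent runs $\{\vx_t\}$ and $\{\vx_t'\}$ started from the same point with independent stochastic oracle calls, and let $\mathcal F_t$ denote the $\sigma$-algebra generated by all iterates up to time $t$. I would expand
\begin{align*}
\|\vx_{t+1}-\vx'_{t+1}\|^2 = \|\vx_t-\vx'_t\|^2-2\eta\langle\vx_t-\vx'_t,g(\vx_t)-g(\vx'_t)\rangle+\eta^2\|g(\vx_t)-g(\vx'_t)\|^2\,.
\end{align*}
Conditioning on $\mathcal F_t$ and using that $g(\vx_t), g(\vx'_t)$ are conditionally independent with means $\nabla f(\vx_t), \nabla f(\vx'_t)$ and variances at most $\delta^2$, the cross term becomes $\langle\vx_t-\vx'_t,\nabla f(\vx_t)-\nabla f(\vx'_t)\rangle\ge 0$ by convexity of $f$, while the quadratic term is bounded by $\|\nabla f(\vx_t)-\nabla f(\vx'_t)\|^2+2\delta^2\le 4G^2+2\delta^2=O(1)$. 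Hence
\begin{align*}
\E\|\vx_{t+1}-\vx'_{t+1}\|^2\le \E\|\vx_t-\vx'_t\|^2+\eta^2\cdot O(1)\,,
\end{align*}
and telescoping from $\vx_0=\vx'_0$ gives $\E\|\vx_t-\vx'_t\|^2\le t\eta^2\cdot O(1)$.

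Finally I transfer this to the averaged iterate by Jensen's inequality applied to $\|\cdot\|^2$:
\begin{align*}
\E\|\bar{\vx}_T-\bar{\vx}'_T\|^2\le\frac{1}{T}\sum_{t=1}^T\E\|\vx_t-\vx'_t\|^2\le T\eta^2\cdot O(1)=O\!\left(\tfrac{1}{T\eps^2}\right)\,,
\end{align*}
using $\eta=\Theta(1/(\eps T))$. This matches the claimed bound. The main conceptual obstacle is the second step: in the nonsmooth setting one cannot use the smooth-case contractivity of $I-\eta\nabla f$, so one must rely on the weaker \emph{nonexpansivity in expectation} afforded by convexity alone, plus independence of the two runs to control the variance contribution to $O(\eta^2)$ per step rather than something worse. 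The rest is routine, and the trade-off between convergence (which pushes $\eta$ up) and deviation (which pushes $\eta$ down) forces the choice $\eta=\Theta(1/(\eps T))$ and correspondingly $T=\Omega(1/\eps^2)$.
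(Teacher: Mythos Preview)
Your proposal is correct and follows essentially the same route as the paper's proof: expand the squared distance, use monotonicity of the subdifferential to drop the cross term, bound the quadratic term by $O(\eta^2)$ via Lipschitzness, telescope, and pass to the averaged iterate by Jensen, with the step size $\eta=\Theta(1/(\eps T))$. The only cosmetic difference is that the paper couples the stochastic run $\{\vx_t\}$ to the \emph{deterministic} exact-gradient trajectory $\{\vy_t\}$ (and then the bound on $\E\|\vx_T-\vx_T'\|^2$ follows via the triangle inequality through $\vy_T$), whereas you compare two independent stochastic runs directly; your version is arguably more direct for the deviation definition at hand, but the arithmetic and the ideas are identical.
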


\begin{proof} Assume now that $f$ is $G$-Lipschitz but otherwise nonsmooth ($G=O(1)$). Let $\{\vx_t\}$ be the GD iterates with stochastic inexact gradients and $\{\vy_t\}$ be the GD iterates with exact gradients.
Then the one-step deviation bound can be derived as follows ($\E$ denotes the conditional expectation over the randomness in $g(\vx_t)$):
\begin{align*}
& \E\norm{\vx_{t+1}-\vy_{t+1}}^2 =\E\norm{(\vx_t-\eta_t g(\vx_t)  -(\vy_t -\eta_t \nabla f(\vy_t)) }^2 \\
     & \quad=  \norm{\vx_t -\vy_t}^2 - 2\eta_t \underbrace{\inp{\vx_t-\vy_t}{\nabla f(\vx_t) -\nabla f(\vy_t)}}_{\leq 0 ~~(\because~\text{convexity})}   \\
  & \qquad   + \eta_t^2 \E\norm{g(\vx_t)-\nabla f(\vx_t) }^2  + \eta_t^2 \norm{\nabla f(\vx_t)- \nabla f(\vy_t) }^2 \\
   &\quad \leq  \norm{\vx_t -\vy_t}^2+ \eta_t^2 (4G^2+\delta^2)\,.
 \end{align*}    
Since we consider the regime $\delta^2 \lesssim 1$, the one-step bound leads to the following deviation inequality: 
 \begin{align} \label{dev:nonsmooth}
     \E\norm{\vx_T-\vy_T}^2 \lesssim  \sum_{t} \eta_t^2. 
 \end{align}
 Note that the above deviation bound is worse than the smooth case deviation bound \eqref{dev:smooth} which reads  $\E\norm{\vx_T-\vy_T}^2 \leq \delta^2 \sum_{t} \eta_t^2$.

  For the algorithm, we  again consider SGD. Invoking the standard convergence guarantee of SGD for nonsmooth costs (see, e.g., \cite[Thm. 6.1]{bubeck2014convex}), with step size $\eta_t \equiv \eta$ for some $\eta>0$, we have the following convergence rate:
 \begin{align} \label{conv:nonsmoothsgd}
     \E f\left(\frac{1}{T}\sum_{t=1}^T \vx_{t}\right) -f(\vx_*) \leq \frac{\norm{\vx_0-\vx_*}^2 }{2\eta T}   + \frac{\eta G^2}{2}\,.
     \end{align}  
     From \eqref{conv:nonsmoothsgd}, it follows that with $\eta =O(\nicefrac{1}{\sqrt{T}})$, the convergence rate reads $\E f(\bar{\vx}_T) -f(\vx_*) \leq O(\nicefrac{1}{\sqrt{T}})$. With such a choice of $\eta$,  the deviation can be bounded using \eqref{dev:smooth} together with \eqref{dev:average}, 
     \begin{align*}
     \E \norm{\bar{\vx}_T -\bar{\vy}_T}^2 \leq \frac{1}{T}\sum_t \E\norm{\vx_t-\vy_t}^2 \lesssim \frac{1}{T} \sum_{t=1}^T \left[t\cdot \frac{1}{T}\cdot G^2\right] \lesssim G^2.
 \end{align*}
 In fact, by choosing $\eta=\frac{1}{\eps T}$ (since $T=\Omega(1/\eps^2)$, it must be that $\eta = O(\eps)$), it follows that the $(\eps,\delta)$-deviation is upper bounded by
 \begin{align*}
        O\left(\frac{1}{T} \sum_{t=1}^T \left[t\cdot \frac{1}{T^2\eps^2}\cdot G^2\right] \right) \lesssim O(\frac{G^2}{T\eps^2})\lesssim O(\frac{1}{T\eps^2}).
 \end{align*} 
 This completes the proof.
 \end{proof} 
 
\subsection{Non-stochastic inexact gradient model}
\label{pf:ub_nonsmooth_det}

We first prove a deviation bound.
\begin{lemma} \label{lem:dev_det}
Suppose that $f$ is convex and $G$-Lipschitz. Let $\{\vy_t\}$ be the iterates of (projected) GD with stepsize $\eta_t$ with exact gradients and $\{x _t\}$ be the iterates of (projected) GD with the same stepsize with inexact gradients with noise $\{\no_t\}$. Assuming that $\norm{\no_t}\leq \delta$ for each $t$, we have 
 \begin{align} \label{ineq:dev_det}
    \norm{\vx_T-\vy_T} \leq \sqrt{3(2G^2+\delta^2) \cdot \sum_{t=0}^{T-1} \eta_t^2 } +2 \delta \sum_{t=0}^{T-1} \eta_t \,.
\end{align}
\end{lemma}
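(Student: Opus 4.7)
The plan is to bound the one-step evolution of the error $\bm{e}_t := \vx_t - \vy_t$ using convexity and $G$-Lipschitzness, then unroll the resulting recursion via a standard ``sup-over-time'' quadratic trick. The starting point is the identity (after using that projection onto a convex set is non-expansive, so we may ignore projections in the upper bound)
\begin{align*}
\bm{e}_{t+1} \;=\; \underbrace{\bm{e}_t - \eta_t \bigl(\nabla f(\vx_t) - \nabla f(\vy_t)\bigr)}_{=:\,\widetilde{\bm{e}}_{t+1}} \;-\; \eta_t \no_t.
\end{align*}
For $\widetilde{\bm{e}}_{t+1}$, expanding the square and dropping the non-negative monotonicity term $2\eta_t \inp{\bm{e}_t}{\nabla f(\vx_t)-\nabla f(\vy_t)} \geq 0$ (convexity of $f$) and using $\norm{\nabla f(\vx_t)-\nabla f(\vy_t)}\leq 2G$ yields $\norm{\widetilde{\bm{e}}_{t+1}}^2 \leq \norm{\bm{e}_t}^2 + 4G^2 \eta_t^2$. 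Together with the triangle-style bound $\norm{\bm{e}_{t+1}} \leq \norm{\widetilde{\bm{e}}_{t+1}} + \eta_t \delta$ (from Cauchy--Schwarz applied to the cross term with $\no_t$), and also $\norm{\widetilde{\bm{e}}_{t+1}} \leq \norm{\bm{e}_t} + 2G\eta_t$, one obtains after squaring the one-step inequality
\begin{align*}
\norm{\bm{e}_{t+1}}^2 \;\leq\; \norm{\bm{e}_t}^2 + 2\delta \eta_t \norm{\bm{e}_t} + (2G+\delta)^2 \eta_t^2.
\end{align*}

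The next step is to unroll this recursion. Since it is inhomogeneous with $\norm{\bm{e}_t}$ on the right, I will introduce $M := \max_{0\leq t\leq T}\norm{\bm{e}_t}$, telescope, and use $\norm{\bm{e}_0}=0$ to get, for every $t\leq T$,
\begin{align*}
\norm{\bm{e}_t}^2 \;\leq\; 2\delta\,M \sum_{s=0}^{T-1}\eta_s + (2G+\delta)^2 \sum_{s=0}^{T-1}\eta_s^2.
\end{align*}
Maximizing the left side over $t$ yields a quadratic inequality in $M$, whose positive root gives
\begin{align*}
M \;\leq\; 2\delta \sum_{s=0}^{T-1}\eta_s + (2G+\delta)\sqrt{\sum_{s=0}^{T-1}\eta_s^2}.
\end{align*}
Finally, AM--GM gives $(2G+\delta)^2 = 4G^2 + 4G\delta + \delta^2 \leq 6G^2 + 3\delta^2 = 3(2G^2+\delta^2)$, and since $\norm{\vx_T - \vy_T}\leq M$, the target bound \eqref{ineq:dev_det} follows.

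The main obstacle I anticipate is step two: naively summing the one-step bound leaves $\norm{\bm{e}_t}$ on the right-hand side, and the naive triangle-inequality route ($\norm{\bm{e}_{t+1}} \leq \norm{\bm{e}_t}+2G\eta_t+\delta\eta_t$) is too loose to recover the $\sqrt{\sum \eta_t^2}$ scaling coming from convexity. The max-trick-plus-quadratic is what reconciles the two contributions and produces the clean separation between the non-smoothness term (scaling like $\sqrt{\sum \eta_t^2}$) and the purely noise-driven drift term (scaling like $\sum \eta_t$) in the final bound.
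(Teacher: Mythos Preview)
Your proof is correct and follows essentially the same route as the paper's: derive a one-step squared-deviation inequality of the form $d_{t+1}^2 \le d_t^2 + 2\delta\eta_t d_t + C\eta_t^2$, telescope, then resolve the resulting inhomogeneous recursion via a max-over-time quadratic argument (the paper phrases this last step as an induction with the case split $d_T \le \max_{t<T} d_t$ versus $d_T > \max_{t<T} d_t$, which is equivalent to your $M$-trick). The only cosmetic difference is that the paper obtains the constant $3(2G^2+\delta^2)$ directly from $\|v_1+v_2+v_3\|^2 \le 3\sum_i\|v_i\|^2$, whereas you first get $(2G+\delta)^2$ by a sharper decomposition and then relax via AM--GM at the end.
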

\begin{proof}
The proof is analogous to \cite[Lemma 3.1]{bassily2020stability}.
First, note that
\begin{align*}
    \norm{\vy_{t+1}-\vx_{t+1}}^2 &\overset{(a)}{\leq} \norm{\vy_{t}-\eta_t \nabla f(\vy_t)-(\vx_{t} -\eta_t (\nabla f(\vx_t)+\no_t))}^2\\
    &=\norm{\vx_t-\vy_t}^2 - 2\eta_t \inp{\vy_t-\vx_t}{\nabla f(\vy_t)-\nabla f (\vx_t)-\no_t }\\
    &\qquad+\eta_t^2 \norm{\nabla f(\vy_t) -
    \nabla f(\vx_t)-\no_t}^2\\
    &\overset{(b)}{\leq} \norm{\vx_t-\vy_t}^2 + 2\eta_t \inp{\vx_t-\vy_t}{\no_t } +\eta_t^2 \norm{\nabla f(\vy_t) -\nabla f(\vx_t)-\no_t}^2\\
    &\overset{(c)}{\leq} \norm{\vx_t-\vy_t}^2 +2\delta \eta_t \norm{\vx_t-\vy_t}  +3\eta_t^2 (2G^2 +\delta^2)\,,
\end{align*}
where $(a)$ is due to the non-expansiveness of the projection step, $(b)$ is due to convexity, and $(c)$ is due to the inequality $\norm{v_1+v_2+v_3}\leq 3\norm{v_1}^2 +3\norm{v_2}^2 +3\norm{v_3}^2$.
Denoting $d_t := \norm{\vx_t-\vy_t}$, we obtain
\begin{align} \label{ineq:dev_sq}
    d_{T}^2 \leq 2 \delta \sum_{t=0}^{T-1} \eta_t   d_t + 3 (2G^2+\delta^2)  \sum_{t=0}^{T-1} \eta_t^2.
\end{align}
We now prove \eqref{ineq:dev_det} by induction. 
If $d_T\leq \max_{t=0,\dots,T-1}d_{t}$, then the conclusion follows from the induction hypothesis.
Hence we may assume that $d_T> \max_{t=0,\dots,T-1}d_{t}$. Then the following inequality follows from \eqref{ineq:dev_sq}:
\begin{align*} 
    d_{T}^2 &\leq 2 \delta  \sum_{t=0}^{T-1} \eta_t  d_t + 3 (2G^2+\delta^2)  \sum_{t=0}^{T-1} \eta_t^2\\
    &\leq 2 \delta  d_T\cdot  \sum_{t=0}^{T-1} \eta_t   + 3 (2G^2+\delta^2)  \sum_{t=0}^{T-1} \eta_t^2\,.
\end{align*}
Solving this, we obtain the desired conclusion \eqref{ineq:dev_det}.
\end{proof}

\begin{restatable}{theorem}{ubdetnonsmooth}{\bf (Upper Bound)}
 \label{thm:ub_det_nonsmooth}
For $G=O(1)$ and $D=O(1)$, let $f$ be an $G$-Lipschitz convex cost function whose optimum lies in a ball of radius $D$. 
Let $\eps>0$ and $\delta>0$ are such that $\delta \leq \frac{\eps}{2D}$. Let $\eps>0$ be a small constant, and $T$ be a given number of iterations.
Under the non-stochastic inexact gradient model, there exists a \sfa{} algorithm whose $(\eps,\delta)$-deviation  is $O(\frac{1}{T\eps^2} +\frac{\delta^2}{\eps^2})$, provided that $T=\Omega(1/\eps^2)$.
\end{restatable}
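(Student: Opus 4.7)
The plan is to run projected gradient descent with constant step size $\eta = \Theta(1/(\eps T))$ onto the ball of radius $D$ containing the optimum (this is a valid \sfa{} algorithm since projection onto a ball rescales positively), and to compare two independent inexact runs $\{\vx_t\}, \{\vx'_t\}$ to the reference noiseless run $\{\vy_t\}$ via Lemma~\ref{lem:dev_det} and a triangle inequality.

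First I would verify $\eps$-accuracy of the average iterate. Letting $\no_t := g(\vx_t) - \nabla f(\vx_t)$ with $\|\no_t\| \leq \delta$, the standard analysis for PGD on nonsmooth convex functions (cf.\ \eqref{conv:nonsmoothsgd}) adapted to deterministic noise yields
\begin{align*}
f(\bar{\vx}_T) - f(\vx_*) \;\leq\; \frac{D^2}{2\eta T} + \frac{\eta G^2}{2} + \delta D.
\end{align*}
With $\eta = 1/(\eps T)$, $T = \Omega(1/\eps^2)$, and $\delta \leq \eps/(2D)$, each term is $O(\eps)$.

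Second I would bound the deviation. Lemma~\ref{lem:dev_det} applied at every iteration $t$ with constant step size $\eta$ gives
\begin{align*}
\|\vx_t - \vy_t\| \;\leq\; \sqrt{3(2G^2+\delta^2)\,t\eta^2} + 2\delta\, t\eta.
\end{align*}
By convexity of $\|\cdot\|$ and the triangle inequality,
\begin{align*}
\|\bar{\vx}_T - \bar{\vy}_T\| \;\leq\; \frac{1}{T}\sum_{t=1}^{T} \|\vx_t - \vy_t\| \;=\; O\!\left(\eta\sqrt{T}\right) + O\!\left(\delta\, T\eta\right) \;=\; O\!\left(\tfrac{1}{\eps\sqrt{T}}\right) + O\!\left(\tfrac{\delta}{\eps}\right).
\end{align*}
The same bound holds for the independent run $\{\vx'_t\}$, so applying the triangle inequality $\|\bar{\vx}_T - \bar{\vx}'_T\|^2 \leq 2\|\bar{\vx}_T - \bar{\vy}_T\|^2 + 2\|\bar{\vy}_T - \bar{\vx}'_T\|^2$ and squaring gives the claimed $(\eps,\delta)$-deviation $O(1/(T\eps^2) + \delta^2/\eps^2)$.

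The main obstacle is controlling the linearly growing term $2\delta t\eta$ from Lemma~\ref{lem:dev_det}: naively this could blow up, but averaging absorbs one factor of $T$ and the product $T\eta = 1/\eps$ is exactly the scale appearing in the target bound $\delta^2/\eps^2$, so the two effects precisely cancel. A subtle point to verify carefully is that Lemma~\ref{lem:dev_det} does apply after the projection step (the non-expansiveness of projection is used inside the one-step inequality in its proof), which is why the argument survives for \emph{projected} GD and the bounded-domain restriction in the theorem is genuinely used.
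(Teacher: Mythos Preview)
Your proposal is correct and essentially mirrors the paper's proof: both run projected GD with constant step size $\eta=\Theta(1/(\eps T))$, invoke the same convergence bound and Lemma~\ref{lem:dev_det}, and then control the deviation of the averaged iterate via Jensen/convexity of the norm. The only cosmetic difference is that the paper bounds $\|\vx_t-\vx'_t\|$ directly (Lemma~\ref{lem:dev_det} extends verbatim to two inexact runs), whereas you route through the noiseless reference $\{\vy_t\}$ and a triangle inequality; this changes nothing in the final estimate.
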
   
 \begin{proof}
 We consider the projected gradient descent with constant stepsize $\eta$ onto the ball of radius $D$ that contains the optimum $\vx_*$. 
 
 Let $\vy_t$ denote the iterate before projection. 
 Let $\no_t$ denote the error due to the non-stochastic inexact gradient model at iteration $t$, i.e., $\no_t:=g(\vx_t) -\nabla f(\vx_t)$.
 Then, we have
 \begin{align*}  \frac{1}{2}\norm{\vx_{t+1}-\vx_*}^2 -\frac{1}{2}\norm{\vx_t-\vx_*}^2  &\leq  \frac{1}{2}\norm{\vy_{t+1}-\vx_*}^2 -\frac{1}{2}\norm{\vx_t-\vx_*}^2 \\
 &\leq  -\eta\inp{\nabla f(\vx_t)+\no_t}{\vx_t-\vx_*} +\frac{1}{2}\eta^2\norm{\nabla f(\vx_t)+\no_t}^2
 \end{align*}
 Hence,
  \begin{align*} 
  & f(\vx_t) -f(\vx_*) +\frac{1}{2\eta}\norm{\vx_{t+1}-\vx_*}^2 -\frac{1}{2\eta }\norm{\vx_t-\vx_*}^2  \\
 &= f(\vx_t) -f(\vx_*) -\inp{\nabla f(\vx_t)+\no_t}{\vx_t-\vx_*} +\frac{1}{2}\eta\norm{\nabla f(\vx_t)+\no_t}^2\\
 &\leq \delta D +
 \eta (G+\delta)^2. 
 \end{align*}
After telescoping the above inequalities from $t=0,\dots, {T-1}$, we obtain the bound
 \begin{align*}
     f\left(\frac{1}{T}\sum_{t=0}^{T-1}\vx_t\right)-f(\vx_*) \lesssim \frac{D^2}{\eta T } +  \eta G^2 +\delta D.
 \end{align*}
 Thus, for $\eps$-accuracy, we need $\eta\lesssim \eps$ and $\eta T \gtrsim 1/\eps$, since the theorem statement assumed that $\delta \leq \frac{\eps}{2D}$. Hence, choosing $\eta = \Theta(\frac{1}{\eps T})$, \autoref{lem:dev_det} gives
 \begin{align*}
     \norm{\vx_t-\vx_t'}^2 &\lesssim \frac{t}{\eps^2T^2} +(t\delta  \eta)^2  = \frac{t}{\eps^2T^2} + \frac{\delta^2t^2}{\eps^2T^2}.
 \end{align*}
Hence,
\begin{align*}
      \norm{\bar{\vx}_T -\bar{\vx}'_T}^2 &\leq \frac{1}{T} \sum_t  \norm{{x}_t  -{x}'_t}^2 \\
    &\lesssim \frac{1}{T} \sum_t\left[ \frac{t}{\eps^2T^2} + \frac{\delta^2t^2}{\eps^2T^2} \right]\approx\frac{1}{\eps^2T} + \frac{\delta^2}{\eps^2}\,,
\end{align*}
as desired.
 \end{proof}

\subsection{Inexact initialization model}
\label{pf:ub_init_nonsmooth}

 \begin{restatable}{theorem}{ubinitnonsmooth}{\bf (Upper Bound)}
 \label{thm:ub_init_nonsmooth}
Let $f$ be an $O(1)$-Lipschitz convex cost function. Let $\eps>0$ be a small constant, and $T$ be a given number of iterations.
Then there exists a \sfa{} algorithm whose $(\eps,\delta)$-deviation  is $O(\frac{1}{T\eps^2}+\delta^2)$, provided that $T=\Omega(1/\eps^2)$.
\end{restatable}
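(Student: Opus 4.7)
The plan is to exhibit a specific \sfa{} algorithm achieving the stated $(\eps,\delta)$-deviation: standard subgradient descent with a constant step size $\eta=\Theta(1/(\eps T))$, outputting the averaged iterate $\bar{\vx}_T=\frac{1}{T}\sum_{t=1}^T \vx_t$. The algorithm is run starting from whatever initial point $\vx_0$ the $\delta$-bounded inexact initialization oracle returns. Since we are in the inexact initialization setting, the gradient oracle is exact, and two independent runs differ only in their starting points $\vx_0,\vx_0'$ with $\|\vx_0-\vx_0'\|\le\delta$.

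First, for $\eps$-accuracy, I would invoke the standard analysis of subgradient descent on $G$-Lipschitz convex costs (e.g.\ the textbook bound behind \eqref{conv:nonsmoothsgd} specialized to deterministic gradients): with $\eta=\Theta(1/(\eps T))$ and $T=\Omega(1/\eps^2)$, one gets $f(\bar{\vx}_T)-f(\vx_*)\le O(\tfrac{D^2}{\eta T}+\eta G^2)=O(\eps)$, matching \autoref{thm:ub_sto_nonsmooth}. This handles the accuracy requirement.

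Next, for the deviation, I would run the two copies in lockstep and expand
\begin{align*}
\|\vx_{t+1}-\vx'_{t+1}\|^2
&=\|\vx_t-\vx'_t\|^2-2\eta\langle\vx_t-\vx'_t,\nabla f(\vx_t)-\nabla f(\vx'_t)\rangle+\eta^2\|\nabla f(\vx_t)-\nabla f(\vx'_t)\|^2.
\end{align*}
Convexity kills the middle term (it is $\ge 0$), and $G$-Lipschitzness bounds the last term by $4G^2\eta^2$. Iterating from $t=0$ and using $\|\vx_0-\vx'_0\|^2\le\delta^2$ gives
\[
\|\vx_t-\vx'_t\|^2\;\le\;\delta^2+4G^2\eta^2\,t\;\le\;\delta^2+O\!\bigl(\tfrac{1}{\eps^2 T}\bigr)
\]
for every $t\le T$, using $\eta=\Theta(1/(\eps T))$. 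Jensen's inequality applied to the average iterate (as in \eqref{dev:average}) then yields $\|\bar{\vx}_T-\bar{\vx}'_T\|^2\le\frac{1}{T}\sum_t\|\vx_t-\vx'_t\|^2=O(\delta^2+\tfrac{1}{T\eps^2})$, which is exactly the claimed bound.

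The main obstacle, and what dictates the form of the bound, is that without smoothness or strong convexity the subgradient-descent map is \emph{not} contractive: the $\eta^2\|\nabla f(\vx_t)-\nabla f(\vx'_t)\|^2$ term prevents us from propagating the initial $\delta$ perturbation through the iterations without accumulation. Thus one cannot hope for a pure $O(\delta^2)$ bound independent of $T$ (indeed, the matching lower bound in \autoref{thm:lb_init_nonsmooth} confirms the $1/(T\eps^2)$ additive term is unavoidable). The resolution is precisely the choice of a small step size $\eta=\Theta(1/(\eps T))$: it is small enough that the accumulated term $4G^2\eta^2 T=O(1/(T\eps^2))$ is affordable, while still large enough (thanks to $T=\Omega(1/\eps^2)$) to guarantee $\eps$-accuracy. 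The averaging step is used for the accuracy guarantee and only improves the deviation bound via Jensen, so no extra care is needed there.
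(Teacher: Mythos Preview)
Your proposal is correct and follows essentially the same approach as the paper: subgradient descent with constant step size $\eta=\Theta(1/(\eps T))$, the same one-step deviation expansion using convexity and $G$-Lipschitzness to get $\|\vx_t-\vx'_t\|^2\le\delta^2+4G^2\eta^2 t$, and then Jensen on the averaged iterate. The only cosmetic difference is that the paper compares the inexact run against the run started at the reference point (so $\|\vx_0-\vy_0\|\le\delta$ directly), whereas you compare two inexact runs (so strictly $\|\vx_0-\vx_0'\|\le 2\delta$ by the triangle inequality); this affects only constants.
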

 \begin{proof}

  We consider the subgradient descent with constant step size $\eta$.
 A standard convergence guarantee for GD reads (see, e.g., \cite[Theorem 3.2]{bubeck2014convex})
 \begin{align*}
     f(\bar{\vx}_T)-f(\vx_*) \lesssim \frac{\norm{\vx_0-\vx_*}^2}{\eta T } + \eta G^2 \,.
 \end{align*}
 Hence, in order to have $\eps$-suboptimality, we need to have  $\eta T \approx \frac{1}{\eps}$ and $\eta\lesssim \eps$.

   We now derive a deviation bound.
  A similar calculation to  \autoref{lem:dev_det} yields the following:
\begin{align*}
    \norm{\vy_{t+1}-\vx_{t+1}}^2 & \leq \norm{\vx_t-\vy_t}^2 - 2\eta \inp{\vy_t-\vx_t}{\nabla f(\vy_t)-\nabla f (\vx_t)} +\eta^2 \norm{\nabla f(\vy_t) -
    \nabla f(\vx_t)}^2\\
    & \leq \norm{\vx_t-\vy_t}^2  +\eta^2 \norm{\nabla f(\vy_t) -\nabla f(\vx_t)}^2 \leq  \norm{\vx_t-\vy_t}^2    +4G^2\eta^2 \,.
\end{align*}
Hence, it holds that 
\begin{align*}
    \norm{\vx_t-\vy_t}^2 \leq \norm{\vx_0-\vy_0}^2 + G^2\sum_{t=0}^{t-1} \eta^2\leq \delta^2 + G^2  t\eta^2 = \delta^2 + G^2 \frac{t}{T^2} (\eta T)^2 \approx \delta^2 + G^2 \frac{t}{T^2} 
    \cdot \frac{1}{\eps^2}  \,.
\end{align*}  
 Thus, it follows that 
\begin{align*}
    \norm{\bar{\vx}_t- \bar{\vy}_T}^2 &\leq \frac{1}{T}\sum_{t=0}^{T-1} \norm{\vx_t-\vy_t}^2 \lesssim  \frac{1}{T}\sum_{t=0}^{T-1} \left(\delta^2 + G^2 \frac{t}{T^2} \frac{1}{\eps^2} \right)   \lesssim \delta^2+ \frac{1}{\eps^2 T},
\end{align*}
as desired.
 \end{proof}

\subsection{Stochastic inexact gradient model (strongly convex costs)}
\begin{restatable}{theorem}{ubstononsmoothstr}{\bf (Upper Bound)}   
 \label{thm:ub_sto_nonsmooth_str}
Let $f$ be an $O(1)$-Lipschitz $\mu$-strongly convex cost function. Let $\eps>0$ be a small constant, and $T$ be a given number of iterations.
Under the stochastic inexact gradient model, there exists a \sfa{} algorithm whose $(\eps,\delta)$-deviation   is $O(\frac{1}{T\mu^2}\wedge  \frac{\eps}{\mu})$, provided that $T=\Omega(1/\eps)$.
\end{restatable}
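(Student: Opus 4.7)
The plan is to analyze SGD with diminishing stepsize $\eta_t = \Theta(1/(\mu(t+k)))$ for a suitably chosen constant $k$, together with the same polynomially weighted averaging scheme used in Theorem~\ref{thm:ub_sto_smooth_str}. The $\eps$-accuracy portion follows from the standard analysis of SGD for $G$-Lipschitz $\mu$-strongly convex functions: writing the usual per-step inequality, invoking strong convexity for the linear term, and bounding $\E\|g(\vx_t)\|^2 \leq 2G^2 + 2\delta^2 = O(1)$, one obtains that the weighted average $\tilde\vx_T$ satisfies $\E f(\tilde\vx_T) - f(\vx_*) = O(1/(\mu T))$, so $T = \Omega(1/(\mu \eps)) = \Omega(1/\eps)$ iterations suffice.

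For the deviation bound, I would couple two independent runs $\{\vx_t\}$ and $\{\vx'_t\}$ starting from the same $\vx_0$, with stochastic oracle outputs $g(\vx_t)$ and $g'(\vx'_t)$ drawn independently. Expanding one step and taking conditional expectation gives
\begin{align*}
\E\|\vx_{t+1} - \vx'_{t+1}\|^2 = \E\|\vx_t - \vx'_t\|^2 - 2\eta_t \E\langle \vx_t - \vx'_t, \nabla f(\vx_t) - \nabla f(\vx'_t)\rangle + \eta_t^2 \E\|g(\vx_t) - g'(\vx'_t)\|^2.
\end{align*}
Strong convexity lower-bounds the cross term by $\mu \E\|\vx_t - \vx'_t\|^2$, and the noise term is at most $C = O(G^2 + \delta^2) = O(1)$ by Lipschitzness and the oracle's variance bound. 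This yields the contraction-type recursion
\begin{align*}
\E\|\vx_{t+1} - \vx'_{t+1}\|^2 \leq (1 - 2\eta_t \mu)\, \E\|\vx_t - \vx'_t\|^2 + C\,\eta_t^2.
\end{align*}
With $\eta_t = \Theta(1/(\mu(t+k)))$, a direct induction on $t$ (multiplying both sides by $(t+k-1)(t+k)$ to unroll the recurrence, exactly as in the proof of Theorem~\ref{thm:ub_sto_smooth_str}) gives $\E\|\vx_t - \vx'_t\|^2 = O(1/(\mu^2 t))$.

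To convert this per-iterate bound into a bound on the output iterate without picking up a stray $\log T$, I would use the same polynomial weights $w_t \propto t+k-1$ as in the smooth strongly convex proof, which are concentrated near $t = T$ where the per-iterate deviation is smallest. Convexity of $\|\cdot\|^2$ then gives
\begin{align*}
\E\|\tilde\vx_T - \tilde\vx'_T\|^2 \leq \sum_{t=1}^T w_t\, \E\|\vx_t - \vx'_t\|^2 \leq O\!\left(\frac{1}{T\mu^2}\right).
\end{align*}
Finally, the $\wedge(\eps/\mu)$ half of the bound comes for free: both $\tilde\vx_T$ and $\tilde\vx'_T$ are $\eps$-suboptimal, so strong convexity gives $\E\|\tilde\vx_T - \vx_*\|^2 \leq 2\eps/\mu$ (and similarly for $\tilde\vx'_T$), and then the triangle inequality yields $\E\|\tilde\vx_T - \tilde\vx'_T\|^2 \leq O(\eps/\mu)$ unconditionally.

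The main obstacle I anticipate is keeping the constants in the stepsize and weighted-averaging scheme coherent so that the excess-risk bound and the deviation bound simultaneously produce the clean rates $O(\eps)$ and $O(1/(T\mu^2))$ without a logarithmic penalty. This is essentially a bookkeeping exercise that mirrors the smooth strongly convex proof, but attention is needed because in the non-smooth case the per-step noise absorbs $G^2$ (not just $\delta^2$), which is what forces the different $T = \Omega(1/\eps)$ dependence relative to the smooth regime.
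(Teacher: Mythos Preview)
Your approach is correct, but the paper takes a much simpler route that avoids the two-run coupling entirely. Because $f$ is strongly convex, the minimizer $\vx^*$ is unique and deterministic, so the paper simply invokes the standard convergence bound $\E f(\bar\vx_T)-f(\vx^*)\lesssim G^2/(\mu T)$ for SGD with stepsize $\eta_t=2/(\mu(t+1))$ and the linearly weighted average $\bar\vx_T=\sum_t \tfrac{2t}{T(T+1)}\vx_t$, and then uses strong convexity once more to convert this into $\E\|\bar\vx_T-\vx^*\|^2\lesssim 1/(T\mu^2)$. The deviation bound then falls out from the triangle inequality through $\vx^*$: $\E\|\bar\vx_T-\bar\vx'_T\|^2\le 2\E\|\bar\vx_T-\vx^*\|^2+2\E\|\bar\vx'_T-\vx^*\|^2$. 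Your direct coupling argument with the contraction recursion $\E\|\vx_{t+1}-\vx'_{t+1}\|^2\le(1-2\eta_t\mu)\E\|\vx_t-\vx'_t\|^2+C\eta_t^2$ is also valid and more in the spirit of the other upper-bound proofs in the paper (e.g.\ the smooth strongly convex case), but it does more work than necessary here: once you have a unique optimum, there is no need to track the two trajectories against each other. The payoff of your route would be that it generalizes to settings without a unique minimizer; the payoff of the paper's route is that it is a two-line argument.
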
  
\begin{proof}
The standard convergence rate bound (e.g., \cite[Theorem 6.2]{bubeck2014convex}) implies that SGD with $\eta_t = \frac{2}{\mu(t+1)}$ satisfies
\begin{align}
   \E f\left(\sum_{t=1}^T \frac{2t}{T(T+1)}\vx_t \right) -f(\vx^*) \lesssim \frac{2G^2 }{\mu (T+1)}\,, 
\end{align}
where $G$ is the Lipschitz constant of $f$.
Hence, letting $\bar{\vx}_T:=\sum_{t=1}^T \frac{2t}{T(T+1)}\vx_t$, it follows that 
\begin{align*}
    \E \norm{\bar{\vx}_T -\vx^*}^2 \lesssim \frac{1}{T\mu^2} \wedge \frac{\eps}{\mu}\,, 
\end{align*}
where $\frac{\eps}{\mu}$ follows from the fact that $\bar{\vx}_T$ achieves $\eps$-accuracy.\end{proof}

\subsection{Non-stochastic inexact gradient model (strongly convex costs)}

   \begin{restatable}{theorem}{ubdetnonsmoothstr}{\bf (Upper Bound)}
 \label{thm:ub_det_nonsmooth_str}
For $G=O(1)$ and $D=O(1)$, let $f$ be an $G$-Lipschitz $\mu$-strongly convex cost function whose optimum lies in a ball of radius $D$. 
Let $\eps>0$ and $\delta>0$ are such that $\delta \leq \frac{\eps}{2D}$. Let $\eps>0$ be a small constant, and $T$ be a given number of iterations. 
Under the non-stochastic inexact gradient model, there exists a \sfa{} algorithm whose $(\eps,\delta)$-deviation   is $O((\frac{1}{T\mu^2} +\frac{\delta^2}{\mu^2})\wedge \frac{\eps}{\mu})$, provided that $T=\Omega(1/\eps)$.
\end{restatable}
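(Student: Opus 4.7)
The plan is to analyze projected subgradient descent with the classical strongly-convex step-size schedule $\eta_t = 2/(\mu(t+1))$, projected onto the ball of radius $D$ that contains the optimum $\vx_*$, and returning the weighted average $\bar{\vx}_T := \sum_{t=1}^T \tfrac{2t}{T(T+1)} \vx_t$. By the same rescaling observation used in the proof of \autoref{thm:ub_det_smooth}, projection onto a ball preserves the \sfa{} structure, so this procedure is a \sfa{} algorithm. The argument then splits into (i) an $\eps$-accuracy bound for $\bar{\vx}_T$, and (ii) a deviation bound for $\|\bar{\vx}_T - \bar{\vx}'_T\|^2$.

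For $\eps$-accuracy, I would run the standard $\mu$-strongly-convex telescoping analysis, tracking the perturbation carefully. Write $g(\vx_t) = \nabla f(\vx_t) + \no_t$ with $\|\no_t\|\le \delta$, and use $\|\vx_t - \vx_*\|\le 2D$ (from the projection) together with strong convexity $\langle \nabla f(\vx_t), \vx_t - \vx_*\rangle \geq f(\vx_t) - f(\vx_*) + \tfrac{\mu}{2}\|\vx_t - \vx_*\|^2$ and $\|g(\vx_t)\|^2 \le 2G^2 + 2\delta^2$ to obtain
\begin{align*}
2\eta_t\bigl(f(\vx_t) - f(\vx_*)\bigr) \leq (1 - \eta_t\mu)\|\vx_t - \vx_*\|^2 - \|\vx_{t+1} - \vx_*\|^2 + 4\eta_t \delta D + 2\eta_t^2(G^2 + \delta^2).
\end{align*}
Multiplying by $t(t+1)$ (so that $t(t-1)\|\vx_t-\vx_*\|^2$ telescopes against $t(t+1)\|\vx_{t+1}-\vx_*\|^2$) and summing from $t=1$ to $T$ yields $f(\bar{\vx}_T) - f(\vx_*) \leq 2\delta D + O\!\bigl((G^2+\delta^2)/(\mu T)\bigr)$. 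The hypothesis $\delta \leq \eps/(2D)$ bounds the first term by $\eps$, and $T = \Omega(1/\eps)$ together with $G,\mu=\Theta(1)$ bounds the second by $O(\eps)$.

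For the deviation bound, let $d_t := \|\vx_t - \vx'_t\|$. Non-expansiveness of the projection, expanding the squared norm, and applying strong convexity to $\langle \vx_t - \vx'_t, \nabla f(\vx_t) - \nabla f(\vx'_t)\rangle \geq \mu d_t^2$ together with $\|\no_t - \no'_t\|\le 2\delta$ yields the one-step recursion
\begin{align*}
d_{t+1}^2 \leq (1 - 2\eta_t\mu) d_t^2 + 4\eta_t \delta\, d_t + 8\eta_t^2(G^2 + \delta^2).
\end{align*}
I would absorb the cross-term with Young's inequality, $4\eta_t\delta\, d_t \leq \eta_t\mu\, d_t^2 + 4\eta_t\delta^2/\mu$, leaving
\begin{align*}
d_{t+1}^2 \leq (1 - \eta_t\mu) d_t^2 + \tfrac{4\eta_t\delta^2}{\mu} + 8\eta_t^2(G^2 + \delta^2).
\end{align*}
Substituting $\eta_t = 2/(\mu(t+1))$ turns the coefficient into $(t-1)/(t+1)$; multiplying through by $t(t+1)$ makes the recursion telescope and gives $d_t^2 = O\!\bigl(\delta^2/\mu^2 + (G^2+\delta^2)/(\mu^2 t)\bigr)$ for every $t$. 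Finally, convexity of $\|\cdot\|^2$ applied to the weighted-average output gives
\begin{align*}
\|\bar{\vx}_T - \bar{\vx}'_T\|^2 \leq \sum_{t=1}^T \tfrac{2t}{T(T+1)}\, d_t^2 = O\!\left(\tfrac{\delta^2}{\mu^2} + \tfrac{1}{T\mu^2}\right).
\end{align*}

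The $\wedge\,\eps/\mu$ piece is free: strong convexity converts $\eps$-accuracy of $\bar{\vx}_T$ into $\|\bar{\vx}_T - \vx_*\|^2 \leq 2\eps/\mu$, and likewise for $\bar{\vx}'_T$, so the triangle inequality yields $\|\bar{\vx}_T - \bar{\vx}'_T\|^2 \leq 8\eps/\mu$. The main technical obstacle is the cross-term $4\eta_t\delta\, d_t$ in the deviation recursion: absorbing it with Young's inequality consumes a factor $\eta_t\mu$ of the contraction, which is precisely why the decreasing schedule $\eta_t = 2/(\mu(t+1))$ is essential — a constant step size would leave an irreducible $O(\delta G/\mu^2)$ cross term that exceeds the target $O(\delta^2/\mu^2 + 1/(T\mu^2))$. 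Everything else is routine verification of the two telescoping recursions.
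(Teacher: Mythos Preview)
Your proposal is correct and follows essentially the same blueprint as the paper: projected subgradient descent with step size $\Theta(1/(\mu t))$, weighted averaging, a standard strongly-convex telescoping argument for $\eps$-accuracy, and a one-step contraction recursion for the deviation.

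The one genuine point of departure is how you handle the cross term $4\eta_t\delta\,d_t$ in the deviation recursion. The paper keeps the cross term, unrolls the recursion to obtain $d_T^2 \lesssim \delta\sum_t \tfrac{t^2}{T^2}\eta_t\,d_t + \sum_t \tfrac{t^2}{T^2}\eta_t^2$, and then invokes the induction trick from \autoref{lem:dev_det} (case-split on whether $d_T$ is the maximum, substitute $d_t\le d_T$, and solve the resulting quadratic). Your route---absorbing the cross term immediately via Young's inequality $4\eta_t\delta\,d_t \le \eta_t\mu\,d_t^2 + 4\eta_t\delta^2/\mu$, which sacrifices half of the contraction factor but leaves a purely linear recursion that telescopes cleanly---is a legitimate and arguably tidier alternative. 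It avoids the case analysis entirely and gives the per-step bound $d_t^2 = O(\delta^2/\mu^2 + 1/(t\mu^2))$ directly, which then feeds into Jensen exactly as the paper does. Both arguments yield the same final bound; yours is a bit more self-contained, while the paper's reuses machinery already developed for the non-strongly-convex case.
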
   
\begin{proof}
 We first prove the convergence rate bound.
 We run projected gradient descent with a constant learning rate $\eta_t = \frac{1}{\mu (t+1)}$.
 Then, it follows that 
 \begin{align*}
     &\frac{\mu(t+1)}{2} \norm{\vx_{t+1}-\vx^*}^2 - 
     \frac{\mu t}{2} \norm{\vx_{t}-\vx^*}^2 \\
     &\quad = \frac{\mu}{2}\norm{\vx_{t}-\vx^*}^2 + \frac{1}{2\eta_t }\left(\norm{\vx_{t+1} - \vx^*}^2 - \norm{\vx_{t} - \vx^*}^2 \right)\\
     &\quad = \frac{\mu}{2}\norm{\vx_{t}-\vx^*}^2 +  \inp{g(\vx_t) }{\vx^*-\vx_t} +\frac{\eta_t}{2} \norm{g(\vx_t)}^2\\
     &\quad \leq  \frac{\mu}{2}\norm{\vx_{t}-\vx^*}^2 +  \inp{\nabla f (\vx_t) }{\vx^*-\vx_t} +\frac{\eta_t}{2} \norm{g(\vx_t)}^2 +\delta D\,,
 \end{align*}
 where the last line follows since every iterate lies in the ball of radius $D$.
 Hence,
 \begin{align*}
    &f(\vx_t) - f(\vx^*)+ \frac{\mu(t+1)}{2} \norm{\vx_{t+1}-\vx^*}^2 - 
     \frac{\mu t}{2} \norm{\vx_{t}-\vx^*}^2\\
     &\quad \leq  \underbrace{f(\vx_t) - f(\vx^*)+ \frac{\mu}{2}\norm{\vx_{t}-\vx^*}^2 +  \inp{\nabla f (\vx_t) }{\vx^*-\vx_t}} +\frac{\eta_t}{2} \norm{g(\vx_t)}^2 +\delta D\\
     &\quad \overset{(a)}{\leq} \frac{\eta_t}{2} \norm{g(\vx_t)}^2 +\delta D \lesssim  \frac{\eta_t}{2} G^2 +\delta D \,,
 \end{align*}
 where ($a$) follows from strong convexity.
 Therefore, it holds that
 \begin{align*}
     &f\left(\sum_{t=1}^T \frac{2(t+1)}{(T+1)(T+2)}\vx_t \right) -f(\vx^*) \leq \sum_{t=1}^T \frac{2(t+1)}{(T+1)(T+2)} (f(\vx_t) -f(\vx^*))\\
     &\quad \lesssim \sum_{t=1}^T \frac{2(t+1)}{(T+1)(T+2)} (\frac{\eta_t}{2}G^2 +\delta D)  \lesssim \frac{G^2}{\mu T } + \delta D. 
\end{align*}
Let  $\bar{\vx}_T:=\sum_{t=1}^T \frac{2t}{T(T+1)}\vx_t$.
We next bound the deviation. 
Again, let $\{\vy_t\}$ be the iterates of (projected) GD with stepsize $\eta_t$ with exact gradients and $\{\vx _t\}$ be the iterates of (projected) GD with the same stepsize with inexact gradients with noise $\{\no_t\}$.
\begin{align*}
    \norm{\vy_{t+1}-\vx_{t+1}}^2 &\overset{(a)}{\leq} \norm{\vy_{t}-\eta_t \nabla f(\vy_t)-(\vx_{t} -\eta_t (\nabla f(\vx_t)+\no_t))}^2\\
    &=\norm{\vx_t-\vy_t}^2 - 2\eta_t \inp{\vy_t-\vx_t}{\nabla f(\vy_t)-\nabla f (\vx_t)-\no_t } \\
    &\qquad +\eta_t^2 \norm{\nabla f(\vy_t) -
    \nabla f(\vx_t)-\no_t}^2\\
    &\overset{(b)}{\leq} (1-2\mu \eta_t)\norm{\vx_t-\vy_t}^2 + 2\eta_t \inp{\vx_t-\vy_t}{\no_t } +\eta_t^2 \norm{\nabla f(\vy_t) -\nabla f(\vx_t)-\no_t}^2\\
    &\overset{(c)}{\lesssim} (1-\frac{2}{t+1} )\norm{\vx_t-\vy_t}^2 + 2\delta \eta_t\norm{\vx_t-\vy_t}  +\eta_t^2 G^2\,,
\end{align*}
where $(a)$ is due to the non-expansiveness of the projection step, $(b)$ is due to convexity, and $(c)$ is due to the inequality $\norm{v_1+v_2+v_3}\leq 3\norm{v_1}^2 +3\norm{v_2}^2 +3\norm{v_3}^2$.
Denoting $d_t := \norm{\vx_t-\vy_t}$, we obtain
\begin{align} \label{ineq:dev_str}
    d_{T}^2 \lesssim  \delta \sum_{t=0}^{T-1} \frac{t^2}{T^2} \eta_t   d_t +  \sum_{t=0}^{T-1} \frac{t^2}{T^2} \eta_t^2.
\end{align}
Now similarly to \autoref{lem:dev_det}, one can deduce from this inequality that
\begin{align*}
     d_{T}^2 \lesssim  \frac{\delta}{\mu} d_T +  \frac{1}{T\mu^2} \quad \Longrightarrow\quad
    d_{T}^2  \lesssim  \frac{\delta^2}{\mu^2}  +  \frac{1}{T\mu^2}\,.
\end{align*}
Now, after applying the Jensen's inequality, we obtain the desired deviation bound of
$\norm{\bar{\vx}_T-\bar{\vx}_T'}^2 \leq (\frac{\delta^2}{\mu^2}  +  \frac{1}{T\mu^2}) \wedge \frac{\eps}{\mu}$, 
where $\frac{\eps}{\mu}$ follows from the fact that $\bar{\vx}_T$ achieves $\eps$-accuracy.
\end{proof}

\subsection{Inexact initialization model (strongly convex costs)}

  \begin{restatable}{theorem}{ubinitnonsmoothstr}{\bf (Upper Bound)}
 \label{thm:ub_init_nonsmooth_str}
Let $f$ be an $O(1)$-Lipschitz $\mu$-strongly convex cost function. Let $\eps>0$ be a small constant, and $T$ be a given number of iterations.
Then there exists a \sfa{} algorithm whose $(\eps,\delta)$-deviation  is $O(\frac{1}{T\mu^2} \wedge \frac{\eps}{\mu})$, provided that $T=\Omega(1/\eps)$.
\end{restatable}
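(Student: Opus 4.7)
My plan is to follow the template of Theorem~\ref{thm:ub_sto_nonsmooth_str} but adapted to the inexact initialization setting, where gradients are exact but two runs start from points $\vx_0, \vx_0'$ with $\|\vx_0 - \vx_0'\| \leq 2\delta$. I would analyze the projected subgradient method with decreasing step size $\eta_t = \frac{2}{\mu(t+1)}$ and weighted averaging $\bar{\vx}_T = \sum_{t=1}^T \frac{2t}{T(T+1)} \vx_t$, which is a \sfa{} algorithm.

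The key step is to invoke the standard strongly-convex convergence rate (e.g., \cite[Theorem 6.2]{bubeck2014convex} or the Lacoste-Julien--Schmidt--Bach analysis), which yields $f(\bar{\vx}_T) - f(\vx^*) \lesssim \frac{G^2}{\mu T}$ \emph{independent of the initial distance to the optimum}. This initialization-independence is what makes the argument work: even though $\vx_0$ is perturbed, the averaged iterate lands within the same $O(\sqrt{1/(T\mu^2)})$ neighborhood of the unique minimizer $\vx^*$. Specifically, by $\mu$-strong convexity,
\begin{align*}
    \|\bar{\vx}_T - \vx^*\|^2 \leq \tfrac{2}{\mu}\bigl(f(\bar{\vx}_T) - f(\vx^*)\bigr) \lesssim \tfrac{1}{T\mu^2},
\end{align*}
and the identical bound holds for $\bar{\vx}'_T$ (the second run). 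A triangle inequality then gives $\|\bar{\vx}_T - \bar{\vx}'_T\|^2 \lesssim \frac{1}{T\mu^2}$. In parallel, the $\eps$-accuracy hypothesis together with strong convexity yields $\|\bar{\vx}_T - \vx^*\|^2 \leq \frac{2\eps}{\mu}$ and similarly for $\bar{\vx}'_T$, giving the alternative bound $\|\bar{\vx}_T - \bar{\vx}'_T\|^2 \lesssim \frac{\eps}{\mu}$. Taking the minimum of the two estimates produces the claimed $O(\frac{1}{T\mu^2} \wedge \frac{\eps}{\mu})$ deviation. The requirement $T = \Omega(1/\eps)$ comes from needing $\frac{G^2}{\mu T} \lesssim \eps$ in order to certify $\eps$-accuracy (using $\mu = \Theta(1)$).

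Since there are no accumulated gradient errors in this model, I do not expect any real obstacle: contraction of strongly-convex subgradient descent annihilates the effect of the $\delta$-scale initialization perturbation over time, and the deviation bound is governed entirely by how close both runs individually are to $\vx^*$. The only mild care needed is to choose the step-size schedule so that the convergence rate does not pick up a $\|\vx_0 - \vx^*\|^2$ factor; the $\eta_t = \frac{2}{\mu(t+1)}$ schedule with weighted averaging achieves exactly this. Notably, this is why $\delta$ does not appear in the final bound — in sharp contrast to the non-smooth, non-strongly-convex case of Theorem~\ref{thm:ub_init_nonsmooth}, where $\delta^2$ persists.
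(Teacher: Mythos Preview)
Your proposal is correct and mirrors the paper's own proof almost exactly: both use subgradient descent with step size $\eta_t = \frac{2}{\mu(t+1)}$ and the weighted average $\bar{\vx}_T = \sum_{t=1}^T \frac{2t}{T(T+1)}\vx_t$, invoke the initialization-independent rate $f(\bar{\vx}_T)-f(\vx^*)\lesssim \frac{G^2}{\mu T}$, and then convert to $\|\bar{\vx}_T-\vx^*\|^2\lesssim \frac{1}{T\mu^2}\wedge\frac{\eps}{\mu}$ via strong convexity. The paper's write-up is even terser (it stops at the distance-to-optimum bound without spelling out the triangle inequality between the two runs), so your version is slightly more explicit but otherwise identical in substance.
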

\begin{proof}
The standard convergence rate bound (e.g., \cite[Theorem 2.4]{bansal2019potential}) implies that GD with $\eta_t = \frac{2}{\mu(t+1)}$ satisfies
\begin{align}
     f\left(\sum_{t=1}^T \frac{2t}{T(T+1)}\vx_t \right) -f(\vx^*) \lesssim \frac{2G^2 }{\mu (T+1)}\,, 
\end{align}
where $G$ is the Lipschitz constant of $f$.
Hence, letting $\bar{\vx}_T:=\sum_{t=1}^T \frac{2t}{T(T+1)}\vx_t$, it follows that 
\begin{align*}
    \norm{\bar{\vx}_T -\vx^*}^2 \lesssim \frac{1}{T\mu^2} \wedge \frac{\eps}{\mu}\,, 
\end{align*}
where $\frac{\eps}{\mu}$ follows from the fact that $\bar{\vx}_T$ achieves $\eps$-accuracy.\end{proof}

\section{Proof of upper bound for finite-sum setting (\autoref{thm:ub_finite_nonsmooth})}
\label{sec:ub_finite}

Recall \autoref{thm:ub_finite_nonsmooth}:
\ubfinitenonsmooth* 
\begin{proof} 
We first prove a deviation bound similar to that of \autoref{lem:dev_det}.
Let us denote $\no_t := g_{i_t}(\vx_t)-\nabla f_{i_t} (\vx_t)$.
 Let $\{\vy_t\}$ be the iterates of (projected) GD with inexact gradients.  Then we have
\begin{align*}
    &\E_{i_t}\norm{\vx_{t+1}-\vy_{t+1}}^2 \\
    &\quad \overset{(a)}{\leq} \E_{i_t}\norm{\vx_{t}-\eta_t g_{i_t} (\vx_t)-(y_{t} -\eta_t (\nabla f(y_t)))}^2\\
    &\quad= \norm{\vx_t-y_t}^2 - 2\eta_t \E_{i_t}\inp{\vx_t-y_t}{g_{i_t}(\vx_t)-\nabla f (y_t)} +\eta_t^2 \E_{i_t}\norm{g_{i_t}(\vx_t)-\nabla f (y_t)}^2\\
 &\quad= \norm{\vx_t-y_t}^2 - 2\eta_t \E_{i_t}\inp{\vx_t-y_t}{\nabla f_{i_t}(\vx_t) +\no_t-\nabla f (y_t)} +\eta_t^2 \E_{i_t}\norm{g_{i_t}(\vx_t)-\nabla f (y_t)}^2\\
  &\quad= \norm{\vx_t-y_t}^2 - 2\eta_t \inp{\vx_t-y_t}{\nabla f (\vx_t) +\no_t-\nabla f (y_t)} +\eta_t^2 \E_{i_t}\norm{g_{i_t}(\vx_t)-\nabla f (y_t)}^2\\
    &\quad\overset{(b)}{\leq} \norm{\vx_t-y_t}^2 - 2\eta_t \inp{\vx_t-y_t}{\no_t } +\eta_t^2 \E_{i_t}\norm{\nabla f_{i_t}(\vx_t) + \no_t-\nabla f (y_t)}^2\\
    &\quad\overset{(c)}{\leq} \norm{\vx_t-y_t}^2 +2\delta \eta_t \norm{\vx_t-y_t}  +3\eta_t^2 (2G^2 +\delta^2)\,,
\end{align*}
where $(a)$ is due to the non-expansiveness of the projection step, and $(b)$ is due to convexity, and $(c)$ is due to the inequality $\norm{v_1+v_2+v_3}\leq 3\norm{v_1}^2 +3\norm{v_2}^2 +3\norm{v_3}^2$.
Taking expectations on both sides, we obtain
\begin{align*}
    \E\norm{\vx_{t+1}-y_{t+1}}^2  &\leq \E\norm{\vx_t-y_t}^2 +2\delta \eta_t  \E\norm{\vx_t-y_t}  +3\eta_t^2 (2G^2 +\delta^2)\\
    &\leq \E\norm{\vx_t-y_t}^2 +2\delta \eta_t   \sqrt{\E\norm{\vx_t-y_t}^2}  +3\eta_t^2 (2G^2 +\delta^2)\,,
\end{align*}

Denoting $d_t := \sqrt{\E\norm{\vx_t-y_t}^2}$ and telescoping the above inequality, we obtain
\begin{align} \label{ineq:dev_sq:2}
    d_{T}^2 \leq 2 \delta \sum_{t=0}^{T-1} \eta_t   d_t + 3(2G^2+\delta^2)  \sum_{t=0}^{T-1} \eta_t^2.
\end{align} 
This is precisely equal to \eqref{ineq:dev_sq} from the proof of \autoref{lem:dev_det}.
Following the same recursion, we obtain the following bound:
\begin{align*} 
   d_T \leq \sqrt{3(2G^2+\delta^2) \cdot \sum_{t=0}^{T-1} \eta_t^2 } +2 \delta  \sum_{t=0}^{T-1} \eta_t \,.
\end{align*}
Squaring both sides, we obtain
\begin{align}
    \E\norm{\vx_T-y_T}^2 \leq  6(2G^2+\delta^2) \cdot \sum_{t=0}^{T-1} \eta_t^2   +8\delta^2  \left( \sum_{t=0}^{T-1} \eta_t \right)^2\,.
\end{align}
We next prove the bound on the convergence rate.

 {\bf Convergence rate bound.}
 We consider the projected gradient descent with constant stepsize $\eta$ onto the ball of radius $D$ that contains the optimum $\vx_*$.
 Let $z_t$ denote the iterate before projection. 
 As before, let $\no_t := g_{i_t}(\vx_t)-\nabla f_{i_t} (\vx_t)$.
 Then, we have
 \begin{align*}  &\frac{1}{2}\E_{i_t}\norm{\vx_{t+1}-\vx_*}^2 -\frac{1}{2}\norm{\vx_t-\vx_*}^2  \leq  \frac{1}{2}\E_{i_t}\norm{z_{t+1}-\vx_*}^2 -\frac{1}{2}\norm{\vx_t-\vx_*}^2 \\
 &\quad = \frac{1}{2}\E_{i_t}\norm{\vx_{t}- \eta_t g_{i_t}(\vx_t)-\vx_*}^2 -\frac{1}{2}\norm{\vx_t-\vx_*}^2 \\
 &\quad=  -2\eta_t\E_{i_t}\inp{\nabla f_{i_t}(\vx_t)+\no_t}{\vx_t-\vx_*} +\frac{1}{2}\eta_t^2\E_{i_t}\norm{\nabla f_{i_t}(\vx_t)+\no_t}^2\\
 &\quad=  -2\eta_t \inp{\nabla f (\vx_t)+\no_t}{\vx_t-\vx_*} +\frac{1}{2}\eta_t^2 \E_{i_t}\norm{\nabla f_{i_t}(\vx_t)+\no_t}^2\\
 &\quad\leq   -2\eta_t \inp{\nabla f (\vx_t)+\no_t}{\vx_t-\vx_*} + \eta_t^2 (G^2+\delta^2)\,.
 \end{align*}
 Hence,
  \begin{align*} 
  &   f(\vx_t) -f(\vx_*) +\frac{1}{2\eta_t}\E_{i_t}\norm{\vx_{t+1}-\vx_*}^2 -\frac{1}{2\eta_t } \norm{\vx_t-\vx_*}^2  \\
 &\quad\leq    f(\vx_t) -f(\vx_*) -\inp{\nabla f (\vx_t)+\no_t}{\vx_t-\vx_*} +\eta_t (G^2+\delta^2)\\
 &\quad\leq \delta D +
 \eta_t (G^2+\delta^2). 
 \end{align*}
Choosing $\eta_t\equiv \eta$ and after telescoping the above inequalities from $t=0,\dots, {T-1}$, we obtain the bound
 \begin{align*}
     \E f\left(\frac{1}{T}\sum_{t=0}^{T-1}\vx_t\right)-f(\vx_*) \lesssim \frac{D^2}{\eta T } +  \eta G^2 +\delta D.
 \end{align*}
 Thus, for $\eps$-accuracy, we need $\eta\lesssim \eps$ and $\eta T \gtrsim 1/\eps$, since the theorem statement assumed that $\delta \leq \frac{\eps}{2D}$. Hence, choosing $\eta = \Theta(\frac{1}{\eps T})$, the deviation bound we proved gives
\begin{align*}
     \E \norm{\vx_t-\vx_t'}^2 &\lesssim \frac{t}{\eps^2T^2} +(t\delta  \eta)^2  = \frac{t}{\eps^2T^2} + \frac{\delta^2t^2}{\eps^2T^2}.
 \end{align*}
Hence,
\begin{align*}
    \E \norm{\bar{\vx}_T -\bar{\vx}'_T}^2 &\leq \frac{1}{T} \sum_t\E \norm{{x}_t  -{x}'_t}^2 \\
    &\lesssim \frac{1}{T} \sum_t\left[ \frac{t}{\eps^2T^2} + \frac{\delta^2t^2}{\eps^2T^2} \right]\approx\frac{1}{\eps^2T} + \frac{\delta^2}{\eps^2}\,,
\end{align*}
as desired.
\end{proof}

\end{document}